\documentclass{article}
\usepackage{authblk}

\usepackage{graphicx}

\usepackage{amsmath, amssymb, amsthm}
\usepackage{mathdots}
\usepackage{tikz-cd}
\usetikzlibrary{patterns, fit}

\usepackage[colorinlistoftodos]{todonotes}
\usepackage[colorlinks=true, allcolors=blue]{hyperref}

\newtheorem{theorem}{Theorem}[section]
\newtheorem{lemma}[theorem]{Lemma}
\newtheorem{corollary}[theorem]{Corollary}

\theoremstyle{definition}
\newtheorem{definition}[theorem]{Definition}

\newtheorem{remark}[theorem]{Remark}

\numberwithin{equation}{section}

\usepackage{thm-restate}

\usepackage[e]{esvect}
\newcommand{\Af}[1]{\vv*{A}{#1}}
\newcommand{\Gf}[1]{\vv*{G}{#1}}

\newcommand{\Afn}{\Af{n}}
\DeclareRobustCommand{\robustAfn}{\Af{n}}

\newcommand{\intv}{\mathbb{I}}

\newcommand{\vect}{\mathrm{vect}}
\newcommand{\id}{\mathrm{id}}

\newcommand{\smat}[1]{ \left[\begin{smallmatrix} #1 \end{smallmatrix}\right] }
\newcommand{\suchthat}{\ifnum\currentgrouptype=16 \;\middle|\;\else\mid\fi}

\newcommand{\fun}[2]{\left[#1,#2\right]}

\usepackage{mathtools}
\newcommand{\defeq}{\coloneqq}

\DeclareMathOperator{\rep}{rep}

\DeclareMathOperator{\End}{End}

\DeclareMathOperator{\Hom}{Hom}

\DeclareMathOperator{\supp}{\mathrm{supp}}
\DeclareMathOperator{\Ker}{Ker}
\DeclareMathOperator{\Coim}{Coim}
\DeclareMathOperator{\Ima}{Im}

\makeatletter
\def\Int #1{\expandafter\Int@i#1\@nil}
\def\Int@i #1,#2\@nil{{#1{:}#2}}

\def\IntC #1{\expandafter\IntC@i#1\@nil}
\def\IntC@i #1,#2\@nil{{#1{,}#2}}

\def\itoi #1{\expandafter\itoi@i#1\@nil}
\def\itoi@i #1,#2,#3,#4\@nil{_{\Int{#1,#2}}^{\Int{#3,#4}}}
\makeatother

\def\clap#1{\hbox to 0pt{\hss#1\hss}}
\def\mathllap{\mathpalette\mathllapinternal}
\def\mathrlap{\mathpalette\mathrlapinternal}
\def\mathclap{\mathpalette\mathclapinternal}
\def\mathllapinternal#1#2{%
  \llap{$\mathsurround=0pt#1{#2}$}}
\def\mathrlapinternal#1#2{%
  \rlap{$\mathsurround=0pt#1{#2}$}}
\def\mathclapinternal#1#2{%
  \clap{$\mathsurround=0pt#1{#2}$}}

\newcommand{\ardrawer}[3]{
  {
  \pgfmathsetmacro{\sidel}{#1};
  \pgfmathsetmacro{\n}{#2};
  \draw[rotate=45,#3] (0,0) -- (-\sidel,0) -- (-\sidel,\sidel);
  \pgfmathsetmacro{\stepsize}{\sidel / \n};
  \foreach \x in {1,...,\n} {
    \pgfmathsetmacro{\xx}{\x * \stepsize};
    \draw[rotate=45,#3] (-\xx,0) -- (-\xx, \xx);
    \draw[rotate=45,#3] (-\xx,\xx) -- (-\sidel, \xx);
  }
  }
}

\newcommand{\arsimple}[2]{
  \pgfmathsetmacro{\x}{#1};
  \pgfmathsetmacro{\xx}{2*\x};
  \filldraw[#2] (0,0) -- (-\x,-\x) -- (-\xx,0);
}

\makeatletter
\let\save@mathaccent\mathaccent
\newcommand*\if@single[3]{%
  \setbox0\hbox{${\mathaccent"0362{#1}}^H$}%
  \setbox2\hbox{${\mathaccent"0362{\kern0pt#1}}^H$}%
  \ifdim\ht0=\ht2 #3\else #2\fi
  }
\newcommand*\rel@kern[1]{\kern#1\dimexpr\macc@kerna}
\newcommand*\widebar[1]{\@ifnextchar^{{\wide@bar{#1}{0}}}{\wide@bar{#1}{1}}}
\newcommand*\wide@bar[2]{\if@single{#1}{\wide@bar@{#1}{#2}{1}}{\wide@bar@{#1}{#2}{2}}}
\newcommand*\wide@bar@[3]{%
  \begingroup
  \def\mathaccent##1##2{%
    \let\mathaccent\save@mathaccent
    \if#32 \let\macc@nucleus\first@char \fi
    \setbox\z@\hbox{$\macc@style{\macc@nucleus}_{}$}%
    \setbox\tw@\hbox{$\macc@style{\macc@nucleus}{}_{}$}%
    \dimen@\wd\tw@
    \advance\dimen@-\wd\z@
    \divide\dimen@ 3
    \@tempdima\wd\tw@
    \advance\@tempdima-\scriptspace
    \divide\@tempdima 10
    \advance\dimen@-\@tempdima
    \ifdim\dimen@>\z@ \dimen@0pt\fi
    \rel@kern{0.6}\kern-\dimen@
    \if#31
      \overline{\rel@kern{-0.6}\kern\dimen@\macc@nucleus\rel@kern{0.4}\kern\dimen@}%
      \advance\dimen@0.4\dimexpr\macc@kerna
      \let\final@kern#2%
      \ifdim\dimen@<\z@ \let\final@kern1\fi
      \if\final@kern1 \kern-\dimen@\fi
    \else
      \overline{\rel@kern{-0.6}\kern\dimen@#1}%
    \fi
  }%
  \macc@depth\@ne
  \let\math@bgroup\@empty \let\math@egroup\macc@set@skewchar
  \mathsurround\z@ \frozen@everymath{\mathgroup\macc@group\relax}%
  \macc@set@skewchar\relax
  \let\mathaccentV\macc@nested@a
  \if#31
    \macc@nested@a\relax111{#1}%
  \else
    \def\gobble@till@marker##1\endmarker{}%
    \futurelet\first@char\gobble@till@marker#1\endmarker
    \ifcat\noexpand\first@char A\else
      \def\first@char{}%
    \fi
    \macc@nested@a\relax111{\first@char}%
  \fi
  \endgroup
}
\makeatother

\title{Every 1D Persistence Module is a Restriction of Some Indecomposable 2D Persistence Module}

\author[1]{Micka\"{e}l Buchet}
\affil[1]{Institute of Geometry, TU Graz}
\affil[1]{buchet@tugraz.at}
\author[2]{Emerson G. Escolar}
\affil[2]{Center for Advanced Intelligence Project, RIKEN / Kyoto University Institute for Advanced Study}
\affil[2]{emerson.escolar@riken.jp}


\begin{document}
\maketitle
\begin{abstract}
  A recent work by Lesnick and Wright proposed a visualisation of $2$D persistence modules by using their restrictions onto lines, giving a family of $1$D persistence modules. We give a constructive proof that any $1$D persistence module with finite support can be found as a restriction of some indecomposable $2$D persistence module with finite support. As consequences of our construction, we are able to exhibit indecomposable $2$D persistence modules whose support has holes as well as an indecomposable $2$D persistence module containing all $1$D persistence modules with finite support as line restrictions. Finally, we also show that any finite-rectangle-decomposable $n$D persistence module can be found as a restriction of some indecomposable $(n+1)$D persistence module.
\end{abstract}

\noindent\textbf{Keywords} Representation theory, Multidimensional persistence\\
\textbf{Mathematics Subject Classification (2010)} 16G20, 55N99

\section{Introduction}

In the theory of persistent homology \cite{edelsbrunner2000topological}, $1$D persistence modules can be summarized and easily visualized using the so-called persistence diagrams, which led to successful applications of topological data analysis.
From a mathematical standpoint, the existence of the persistence diagram is a direct consequence of the fact that, viewed as representations of the underlying quiver $\mathbb{A}_n$, $1$D persistence modules can be uniquely decomposed into indecomposable representations that are intervals \cite{gabriel1972unzerlegbare}. The endpoints of the intervals give the birth and death indices of the topological features.

In the multidimensional persistence \cite{carlsson2009theory}, multidimensional persistence modules can be studied as representations of an underlying quiver which is a commutative $n$D grid. The indecomposable representations are no longer intervals and, more devastatingly, cannot be easily listed. More precisely, there is no complete discrete invariant that can describe all indecomposable $n$D persistence modules when $n \geq 2$ \cite{carlsson2009theory}. In the representation-theoretic language, the (large enough) commutative grid is of wild representation type.

Lesnick and Wright~\cite{lesnick2015interactive} proposed an interactive visualisation of $2$D persistence modules using restrictions onto lines on the grid. For each line, the restriction is a $1$D persistence module and thus can be summarized by a persistence diagram.
They took the approach of exploring restrictions onto all possible lines of non-negative slope in order to visualize and obtain insight into the structure of a $2$D persistence module.
Note that considering the barcodes over all such lines is equivalent to considering the rank invariant \cite{carlsson2009theory} of the $2$D persistence module.

For most of this work, we only  consider persistence modules with finite support. For the sake of simplicity, unless stated otherwise, all persistence modules will have finite support. The only exception is the infinite $2$D persistence module we build in Section~\ref{sec:lotr}.

In this work, we look into the possible outcomes of the restriction onto a single line of a $2$D persistence module that is indecomposable. It was hoped that, being the building blocks of $2$D persistence modules, the $2$D indecomposables should have restrictions that are ``simple'' or coming from a restricted set of possibilities. However, we show that this is not the case.

Our main result is a constructive proof that \emph{any} $1$D persistence module $V$ can be obtained via the restriction onto a line of an \emph{indecomposable} $2$D persistence module (of large enough, but finite, support). We show the following Theorem (see Sec.~\ref{sec:background} for precise definitions).
\begin{restatable}{theorem}{mainthm}
  \label{thm:main}
  Let $V \in \rep{\Afn}$. Then there exists an indecomposable $M \in \rep \Gf{}$ with finite support such that $V$ is a line restriction of $M$.
\end{restatable}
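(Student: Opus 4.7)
The plan is to proceed by an explicit construction. Starting from the decomposition $V = \bigoplus_{i=1}^k \intv[b_i, d_i]$ given by Gabriel's theorem, I would build $M$ so that each interval summand is realised along a chosen line $L$ while auxiliary structure placed off $L$ fuses the summands into a single indecomposable. The finite support of $M$ will follow from placing everything inside a large enough but bounded rectangular region of $\Gf{}$.

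In more detail, I would place the intervals of $V$ in $k$ horizontal strips at increasing heights $h_1 < \dots < h_k$, and choose $L$ of suitable positive slope so that within strip $i$ the line $L$ covers exactly the horizontal range $[b_i, d_i]$. Inside each strip the module is a rectangle with identity horizontal and vertical maps, which restricts along $L$ to the interval module $\intv[b_i, d_i]$. Between consecutive strips I would attach ``bridge'' vertices: each a one-dimensional space linked by identity maps to one vertex in strip $i$ and one vertex in strip $i+1$, placed in columns or rows that do not lie on $L$. The line restriction then recovers $V$ summand by summand, since bridges contribute nothing to the values along $L$, and gaps between consecutive strips along $L$ correspond to zero values, matching the direct sum structure of $V$.

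The crux of the proof, and the main obstacle, is showing that this $M$ is indecomposable. The natural strategy is to argue that $\End(M)$ is local: the identity maps within a strip force any endomorphism $\varphi$ to act as a scalar on that strip, and commuting with the bridge identities forces the scalars on strip $i$ and strip $i+1$ to coincide, so that $\varphi$ is scalar on all of $M$ modulo a nilpotent ideal. The subtlety is that the bridges must simultaneously propagate scalars across every strip and not admit independent idempotents themselves, while living entirely off $L$ and respecting all commutativity squares of $\Gf{}$. Depending on how the endpoints $\{b_i, d_i\}$ and $\{b_{i+1}, d_{i+1}\}$ overlap or are disjoint, the precise placement of bridges may need to be adjusted, and verifying that every such configuration yields a local endomorphism ring will be the technical heart of the argument.
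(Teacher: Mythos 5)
There is a genuine gap, and it is fatal for the general statement: your construction cannot recover an arbitrary $V$. In your module $M$ every vertex carries a vector space of dimension at most one (each strip is a thin rectangle and the bridges are one-dimensional), and a line $L$ meets exactly one vertex per parameter value, so the restriction $R_L(M)$ has pointwise dimension at most one. But the theorem is for arbitrary $V \in \rep\Afn$, whose interval summands may overlap or repeat --- e.g.\ $V = \intv[0,1]\oplus\intv[0,1]$ or $V=\intv[0,2]\oplus\intv[1,3]$ --- and such a $V$ has points of dimension $\geq 2$, which no restriction of your $M$ can match. Monotonicity of $L$ causes the same failure from the other side: if $b_{i+1}\leq d_i$, the line cannot cover the horizontal range $[b_i,d_i]$ inside strip $i$ and then revisit columns $\leq d_i$ inside strip $i+1$. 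So the ``one strip per summand, visited in turn'' plan only handles barcodes of pairwise disjoint intervals. The paper avoids this by keeping the whole module $V$ (with all multiplicities concentrated in higher-dimensional spaces at single vertices) on one horizontal row and taking $L$ to be that row; indecomposability is then forced by adding three auxiliary rows: separate-and-shift to distinct death indices, verticalization so that all intervals share a common midpoint (which makes every endomorphism diagonal in the matrix formalism, Lemma~\ref{lem:vertical}), a propagation argument pushing that diagonal to the other rows (Lemma~\ref{lem:prop}), and a final coning by a single interval, which forces all diagonal scalars to coincide, giving $\End\cong K$.

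Your bridging idea is not wrong in itself --- it is close in spirit to the paper's concatenation of candy modules in Section~\ref{sec:lotr}, where modules with scalar endomorphism rings are glued at extremal one-dimensional vertices and the identity maps across the bridge propagate the scalar. But even there the placement is delicate: on the commutative grid the maps between comparable support vertices are forced by composition along monotone paths, so a bridge comparable to too much of two strips creates contradictory commutativity constraints (a composite through a zero vertex must vanish while an internal identity must not). You would need to verify such geometric separation for every configuration of endpoints, on top of the unresolved multiplicity problem above. As it stands, the proposal proves at most the special case where the barcode of $V$ consists of pairwise disjoint intervals, not the theorem.
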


Theorem~\ref{thm:main} can be seen as another expression of how complicated the indecomposable $2$D persistence modules are. Stated another way, the indecomposable $2$D persistence modules collectively contain all possible $1$D persistence modules via line restrictions.
Another, less rigorous, interpretation is that by just looking at one line restriction $V$ of some $2$D persistence module $M$ (not necessarily indecomposable), and without any other information about the original $2$D persistence module $M$, we cannot infer much about the decomposition structure of $M$.
  More precisely, for each decomposition
  $
    V = \bigoplus_{i=1}^l V_i
    $
    (with $V_i$ not necessarily indecomposable), it is possible that
  $
    M \cong X \oplus \bigoplus_{i=1}^l M_i
  $
  such that $M_i$ are indecomposable, $M_i$ restricts to $V_i$, and $X$ restricts to $0$ on the line. Note that our construction of an indecomposable is not the only choice of such an $M_i$.

  Using our construction for Theorem~\ref{thm:main}, we are able to build several objects of particular interest.
First, we are able to exhibit indecomposable $2$D persistence modules whose support can have an arbitrary finite number of holes. We believe that their existence was not obvious and that providing a concrete example will help deepen the understanding of $2$D persistence modules. We also address the question of what is the smallest number of vertices needed in the support of an indecomposable $2$D persistence module with at least one hole.
Second, we build a single indecomposable $2$D persistence module with infinite support that contains all possible $1$D persistence modules with finite support as line restrictions.

Finally, we extend Theorem~\ref{thm:main} to finite-rectangle-decomposable $n$D persistence modules. In particular, we show that any finite-rectangle-decomposable $n$D persistence module $V$ can be obtained via a ``hyperplane restriction'' of an indecomposable $(n+1)$D persistence module. The construction directly allows us to build indecomposable $(n+1)$D persistence modules with an arbitrary finite number of $n$D holes.

We introduce basic definitions in Section~\ref{sec:background} and tools in Section~\ref{sec:tools}.
We then describe our construction of an indecomposable $2$D persistence module given $V$ and its properties in Section~\ref{sec:results}. In Section~\ref{sec:zoology}, we exhibit examples of $2$D indecomposables with holey support obtained as a result of our construction. There, we also find the smallest number of vertices needed in the support of an indecomposable $2$D persistence module with a hole, via an ad hoc construction. In Section~\ref{sec:lotr}, we build the indecomposable $2$D persistence module with infinite support containing all possible $1$D persistence modules with finite support as line restrictions. In Section~\ref{sec:rectangles} we discuss the extension to finite-rectangle-decomposable $n$D persistence modules.


\section{Background}\label{sec:background}

We use the formalism of both category theory and representation theory, while providing clear intuition as much as possible. For convenience, we mainly use the representation theory of posets, but the representation theory of bound quivers is also used.
If needed, we refer the reader to~\cite{assem2006elements} for more details on the representation theory of bound quivers, and to~\cite{mac2013categories} for the category theory.

\subsection{The poset $\robustAfn$ and $1$D persistence modules}
A poset (a partially ordered set) $(P, \leq)$ can be viewed as a category with objects $i \in P$, and morphisms determined as follows. There is a unique morphism $i\rightarrow j$ if and only if $i\leq j$. We abuse notation and write $i\leq j$ to also mean this unique morphism.
Composition follows from transitivity: if there are morphisms $i\rightarrow j$ and $j\rightarrow k$, then $i\leq j$ and $j \leq k$, implying $i\leq k$. Thus, there is a unique morphism $i \rightarrow k$, which is the composition of the two morphisms. The identity of object $i\in P$ is clearly the morphism $i \leq i$.


We define $[n]$ to be the set $\{0,1,\hdots,n-1\}$. By $\Afn$, we mean the poset category of $[n] \defeq \{0,1,\hdots,n-1\}$ with the usual order $\leq$. We shall see below that the category $\Afn$ is intimately related to the quiver
\begin{equation}
  \label{eq:qn}
  \vec{\mathbb{A}}_n:
  \begin{tikzcd}
    0 \rar & 1 \rar & \hdots \rar & n-1
  \end{tikzcd}
\end{equation}
which can be thought of as the Hasse diagram of $\Afn$.

Let $K$ be a field, which we fix throughout this work. Recall that a $K$-category is a category where the set of morphisms between any two objects has $K$-vector space structure, and composition is $K$-bilinear.

Let $P$ be a poset category and $C$ a $K$-category. A $C$-valued representation of $P$ is a functor $F:P\rightarrow C$. That is, it assigns an object $F(i)$ of $C$ to each $i\in P$, and to each morphism $i\rightarrow j$ in $P$ (i.e. denoted by $i\leq j$ in the sequel) a morphism $F(i\leq j) : F(i) \rightarrow F(j)$ in $C$, satisfying the following properties: $F(i\leq i)$ is the identity on $F(i)$, and $F(j\leq k)F(i\leq j) = F(i\leq k)$.

A morphism between two $C$-valued representations of $P$ is nothing but a natural transformation. That is, given $F: P\rightarrow C$ and $G:P\rightarrow C$, a morphism $\eta: F \rightarrow G$ is a family $\{\eta_i: F(i) \rightarrow G(i)\}_{i\in P}$ of morphisms in $C$ such that
\[
  \begin{tikzcd}
    F(i) \rar{F(i\leq j)}\dar{\eta_i} & F(j)\dar{\eta_j} \\
    G(i) \rar{G(i\leq j)} & G(j)
  \end{tikzcd}
\]
commutes for all $i\leq j$ in $P$. The set of morphisms from $F$ to $G$ is denoted by $\Hom(F,G)$. Morphisms from $F$ to itself are called \emph{endomorphisms}, and we use the notation $\End(F) \doteq \Hom(F,F)$. Composition
$\begin{tikzcd}
  F \rar{\eta} & G \rar{\nu} & H
\end{tikzcd}
$
is the obvious one: $\nu\eta$ is defined by $(\nu \eta)_i = \nu_i \eta_i$. The category of $C$-valued representations of $P$ shall be denoted by $\fun{P}{C}$.

For the case $C = \vect_K$, the category of finite-dimensional $K$-vector spaces, we use the notation $\rep{P} \defeq \fun{P}{\vect_K}$ for the category of $\vect_K$-valued representations, also called $K$-linear representations. Unless specified otherwise, \emph{representation} shall mean a $K$-linear representation. The \emph{support} of $V \in \rep P$ is the set $\{i \in P \suchthat V(i) \neq 0\}$.

It is known that $\rep{P}$ is an additive $K$-category. The direct sum of two representations $V$ and $W$ is given by $(V\oplus W)(p) = V(p) \oplus W(p)$ for all $p\in P$. The zero representation is the functor $0$ that takes everything in $P$ to zero vector spaces and zero maps. A representation $V$ is said to be \emph{indecomposable} if $V \cong W \oplus W'$ implies $W$ or $W'$ is $0$.

For example, a representation $V$ of $\Afn$ is an assignment of the following data: to each object $i \in [n]$, a finite-dimensional $K$-vector space $V(i)$; to each pair $i\leq j$, a $K$-linear map $V(i\leq j) : V(i) \rightarrow V(j)$ so that the composition
\[
  V(i) \xrightarrow{V(i\leq j)} V(j) \xrightarrow{V(j\leq k)} V(k)
\]
is equal to $V(i\leq k) : V(i) \rightarrow V(k)$ for each $i\leq j \leq k$, and $V(i\leq i)$ is the identity for each $i$.

Thus, $V \in \rep\Afn$ is completely determined by vector spaces $V(i)$ for $i\in [n]$ and linear maps $V(i\leq i+1)$ for $i\in [n-1]$. This corresponds to the usual notion of a representation of the quiver $\vec{\mathbb{A}}_n$ given in Diagram~\eqref{eq:qn}. In fact, $\rep \Af{n} \cong \rep \vec{\mathbb{A}}_n$, where the latter is the category of finite-dimensional $K$-linear representations of the quiver $\vec{\mathbb{A}}_n$. We shall freely use this identification. A $1$D \emph{persistence module} is simply a representation of $\Afn$.

A fundamental result (Krull-Schmidt Theorem) is that every representation in $\rep\Afn$ can be uniquely decomposed into a finite direct sum of indecomposable representations, up to isomorphism and permutation of terms.
Furthermore, it is known \cite{gabriel1972unzerlegbare} that any indecomposable representation of $\Afn$ is isomorphic to an interval representation $\intv[i,j]$ for some $i\leq j \in [n]$. The \emph{interval representation} $\intv[i,j]$ is defined
by:
\[
  \intv[i,j](\ell) =
  \left\{
    \begin{array}{cc}
      K & \text{if } i\leq \ell \leq j, \\
      0 & \text{otherwise,}
    \end{array}
  \right.
  \text{ and }
  \intv[i,j](\ell \leq k) =
  \left\{
    \begin{array}{cl}
      1_K & \text{if } i\leq \ell \leq k \leq j, \\
      0 & \text{otherwise.}
    \end{array}
  \right.
\]

Combining the above two facts, we get that for each $V \in \rep \Afn$,
\[
  V \cong \bigoplus_{i = 1}^N \intv[b_i, d_i]^{m_i}
\]
uniquely for some positive integers $m_1,\hdots,m_N$ and $b_i \leq d_i \in [n]$ for $i \in \{1,\hdots, N\}$ with all pairs $(b_i,d_i)$ distinct. In other words, the $1$D persistence module $V$ is, up to isomorphism, entirely determined by the multiset consisting of pairs $(b_i,d_i)$ with multiplicities $m_i$ for all $i\in \{1,\hdots, N\}$. This can be drawn as a multiset of points in $\mathbb{R}^2$ and is called the \emph{persistence diagram} of the $1$D persistence module $V$.

In practice, the persistence module is often obtained as the sequence of homology vector spaces and induced homology maps of a sublevel set filtration parametrized over $\mathbb{R}$. In this case, the underlying poset is $(\mathbb{R},\leq)$ instead. However, some mild tameness assumptions on the filtration or persistence module guarantee the existence of a persistence diagram \cite{chazal2016structure,crawley2015decomposition}. Moreover, if there are only a finite number of ``critical points'', then we can consider the persistence module as a representation of $\Afn$.

\subsection{Commutative grids and line restrictions}

Given $V \in \rep{\Afn}$, we will construct an indecomposable persistence module over an equioriented commutative $2$D grid of size $w \times h$. The size of the grid will depend on $V$ itself, and so may be larger than $n$. To avoid having to specify sizes in our definition of line restrictions, we use an infinite discrete commutative grid.

First, we extend $\Afn$ to an infinite line by the inclusion of $\Af{n} = (\{0,\hdots,n-1\},\leq)$ in the poset $(\mathbb{Z},\leq)$. Thus, $\rep\Af{n}$ is a full subcategory of $\rep(\mathbb{Z},\leq)$. We use the following definition for the equioriented commutative $2$D grids.
\begin{definition}[Equioriented commutative $2$D grids]
  \leavevmode
  \begin{enumerate}
  \item We define $\Gf{}$ to be the poset category $(\mathbb{Z}\times\mathbb{Z}, \leq)$ with $(a,b) \leq (x,y)$ if and only if $a \leq x$ and $b \leq y$.
  \item $\Gf{h,w}$ is the poset category $([h]\times[w],\leq)$, which is the full subcategory of $\Gf{}$ with set of objects $[h]\times[w]$.
  \end{enumerate}
\end{definition}

To see why there is a ``commutative'' in the name, we provide as an example $\Gf{2,2}$. A representation $V \in \rep(\Gf{2,2})$ is completely determined by the data:
\[
  \begin{tikzcd}[column sep=6em]
    V((1,0)) \rar{\delta\defeq V((1,0)\leq(1,1))} & V((1,1)) \\
    V((0,0)) \rar{\alpha\defeq V((0,0)\leq(0,1))} \uar{\gamma \defeq V((0,0)\leq(1,0))} &
    V((0,1)) \uar[swap]{\beta\defeq V((0,1)\leq(1,1))}
  \end{tikzcd}
\]
where by functoriality,
\[
  \beta\alpha =  V((0,0)\leq(1,1)) = \delta \gamma.
\]
That is, a commutativity relation is automatically satisfied.

 We call representations of $\Gf{}$ as \emph{$2$D persistence modules}. If $V \in \rep\Gf{}$ has finite total dimension, then it has finite support, and can be viewed (after translation) as a representation of some finite $2$D grid $\Gf{h,w}$.

Alternatively, $\rep\Gf{h,w}$ is isomorphic to the representation category of the $h\times w$ equioriented $2$D grid quiver bound by commutativity relations. See for example, \cite{thin-decomposability}, for precise definitions.

\begin{definition}[Line]
  A \emph{line} $L$ is a functor
  \[
    L: (\mathbb{Z},\leq) \rightarrow (\mathbb{Z}\times\mathbb{Z},\leq)
  \]
  such that if $i \neq j$, then $L(i) \neq L(j)$.
\end{definition}
Note that the definition above does not correspond exactly with a geometric ``line''. For example, the functor $L$ defined on objects $x\in\mathbb{Z}$ by $L(x) = (2x,2x)$ is a line. However, it geometrically intersects points on the $2$D grid that are not part of its image, for example the point $(1,1)$. Our result does not rely on this peculiarity.

Given a line $L$, we get an induced functor between representation categories via composition:
\[
	\begin{array}{rlccl}
    	R_L &:& \rep \Gf{} & \rightarrow &\rep \mathbb{Z} \\
                & & (M: \Gf{} \rightarrow \vect_K) & \mapsto & (M\circ L: \mathbb{Z} \rightarrow \vect_K)
    \end{array}
\]
and with effect on morphisms also by composition. That is, if $\eta:M\rightarrow N$ is a morphism between representations of $\Gf{}$ (a natural transformation), then $(R_L(\eta))_i = \eta_{L(i)}$ defines the morphism $R_L(\eta) : ML \rightarrow NL$.

\begin{definition}[Line Restriction]
  \label{defn:line_restriction}
  Let $V \in \rep\Af{n} \subset \rep \mathbb{Z}$. We say that $V$ is a \emph{line restriction} of $M\in\rep\Gf{}$ if there is a line $L$ such that $R_L(M) \cong V$.
\end{definition}


\section{Tools}
\label{sec:tools}

\subsection{Stacking}

To prove Theorem~\ref{thm:main}, for each $V \in \rep\Afn$ we construct an indecomposable $2$D persistence module $I$ such that $V$ is a line restriction of $I$. In order to easily construct $2$D persistence modules, we stack together representations of $\Af{w}$ (for some longer $w \geq n$). This stacking procedure can be theoretically justified by the following lemma.

\begin{restatable}[Stacking]{lemma}{stackinglem}  
  \label{lem:vertical_stacking}
  There is an isomorphism of categories
  \[
    \rep\Gf{h,w} = \fun{\Gf{h,w}}{\vect_K} \cong \fun{\Af{h}}{\fun{\Af{w}}{\vect_K}} = \fun{\Af{h}}{\rep\Af{w}}.
  \]
\end{restatable}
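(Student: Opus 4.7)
The plan is to recognize this as a special case of the standard currying/exponential-law isomorphism for functor categories, namely
\[
  \Fun(\mathcal{A}\times\mathcal{B},\mathcal{C}) \;\cong\; \Fun(\mathcal{A},\Fun(\mathcal{B},\mathcal{C})),
\]
applied with $\mathcal{A}=\Af{h}$, $\mathcal{B}=\Af{w}$, and $\mathcal{C}=\vect_K$. The key preliminary observation is that, as poset categories, $\Gf{h,w}=\Af{h}\times\Af{w}$: the product order on $[h]\times[w]$ is exactly the coordinatewise order, and a unique morphism $(i,j)\to(i',j')$ in the product category exists iff $i\leq i'$ and $j\leq j'$, which matches the definition of $\Gf{h,w}$. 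Thus it suffices to build the currying isomorphism explicitly in this concrete setting.

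First I would define the functor $\Phi:\rep\Gf{h,w}\to\fun{\Af{h}}{\rep\Af{w}}$. Given $M\in\rep\Gf{h,w}$, set, for each $i\in[h]$, the ``horizontal slice'' $\Phi(M)(i)\in\rep\Af{w}$ by
\[
  \Phi(M)(i)(j) \defeq M(i,j),\qquad \Phi(M)(i)(j\leq j') \defeq M\bigl((i,j)\leq(i,j')\bigr).
\]
For $i\leq i'$ in $\Af{h}$, define $\Phi(M)(i\leq i'):\Phi(M)(i)\to\Phi(M)(i')$ componentwise by $\bigl(\Phi(M)(i\leq i')\bigr)_j \defeq M\bigl((i,j)\leq(i',j)\bigr)$. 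The naturality of this family in $j$ is exactly the commutativity of the square
\[
  \begin{tikzcd}
    M(i,j) \rar\dar & M(i,j')\dar \\
    M(i',j) \rar & M(i',j')
  \end{tikzcd}
\]
which holds because both composites equal $M\bigl((i,j)\leq(i',j')\bigr)$ by functoriality of $M$. On morphisms, a natural transformation $\eta:M\to N$ in $\rep\Gf{h,w}$ is sent to the natural transformation $\Phi(\eta)$ whose value at $i\in[h]$ is the morphism $\Phi(M)(i)\to\Phi(N)(i)$ with $j$-th component $\eta_{(i,j)}$; the two required naturality conditions (in $j$ and in $i$) are immediate from the single naturality of $\eta$.

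Next I would define the inverse $\Psi:\fun{\Af{h}}{\rep\Af{w}}\to\rep\Gf{h,w}$. Given $N:\Af{h}\to\rep\Af{w}$, set $\Psi(N)(i,j)\defeq N(i)(j)$, and for $(i,j)\leq(i',j')$ declare
\[
  \Psi(N)\bigl((i,j)\leq(i',j')\bigr) \defeq N(i')(j\leq j')\circ\bigl(N(i\leq i')\bigr)_j \;=\; \bigl(N(i\leq i')\bigr)_{j'}\circ N(i)(j\leq j'),
\]
the two expressions agreeing by naturality of $N(i\leq i')$. Functoriality of $\Psi(N)$ in $\Gf{h,w}$ follows by splitting $(i,j)\leq(i'',j'')$ through $(i',j'')$ and combining functoriality of $N$ with functoriality of the $N(i)$'s. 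Morphisms are sent to the obvious families $\Psi(\theta)_{(i,j)}\defeq(\theta_i)_j$.

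Finally I would check $\Phi\Psi=\id$ and $\Psi\Phi=\id$ on objects and morphisms, which is essentially a re-reading of the two definitions. The main ``obstacle'' is not conceptual at all but purely notational: one must keep careful track of which index is the $\Af{h}$-index and which is the $\Af{w}$-index, and verify that the single commutativity relation encoded by a square in $\Gf{h,w}$ translates on the nose to the naturality condition of $\Phi(M)(i\leq i')$. Once the product identification $\Gf{h,w}=\Af{h}\times\Af{w}$ is made explicit, everything is formal, and the isomorphism is in fact an isomorphism of categories rather than merely an equivalence.
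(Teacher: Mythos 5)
Your proposal is correct and follows essentially the same route as the paper's own proof in the appendix: an explicit construction of the restriction-to-rows functor $\Phi$ and its inverse $\Psi$, with the commutativity squares of $M\in\rep\Gf{h,w}$ supplying exactly the naturality needed for $\Phi(M)(i\leq i')$, and a direct check that the two composites are identities. The only cosmetic difference is that you foreground the identification $\Gf{h,w}\cong\Af{h}\times\Af{w}$ and the general exponential law, which the paper mentions only as motivation before giving the same elementary verification.
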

In words, representations of the commutative grid $\Gf{h,w}$ can be viewed as $\rep\Af{w}$-valued representations of a $1$D grid (the vertical line $\Af{h}$) by collapsing each row to an object of $\rep\Af{w}$. In this interpretation, the representations of $\Af{w}$ are drawn as rows, and $h$ of them are stacked in a line, with a morphism from the $i$th row to the $j$th row for $i\leq j \in[h]$.

This can also be seen as ``currying'' a functor in $\rep \Gf{h,w}$. More generally, this is analogous to tensor-hom adjunction in $\vect_K$-enriched functor categories, as in Section~2.3 of \cite{kelly1982basic}. Note that the poset categories $\Gf{h,w}$, $\Af{w}$, and $\Af{h}$ are not $\vect_K$-enriched, but we can consider instead their $K$-linearizations $K\Gf{h,w}$, $K\Af{w}$, and $K\Af{h}$. The analogy follows by considering $K$-linear functors from the $K$-linearizations instead of functors from the poset categories and noting that $K\Gf{h,w}\cong K\Af{h}\otimes K\Af{w}$. We provide an elementary proof of Lemma~\ref{lem:vertical_stacking} in Appendix~\ref{appendix}.

The stacking lemma is an obvious extension of an idea in the paper \cite{asashiba2018matrix}, where representations of commutative ladders, which are commutative $2$D grids of size $2\times w$, are viewed as morphisms in $\rep\Af{w}$. This is essentially stacking two representations of $\Af{w}$. 

Objects $M$ in the category $\fun{\Af{h}}{\rep\Af{w}}$ are $\rep(\Af{w})$-valued representations of $\Af{h}$. Thus, each arrow $i\leq j \in \Af{h}$ is given the data of a morphism $M(i\leq j):M(i) \rightarrow M(j)$ in $\rep(\Af{w})$ since $M$ is $\rep(\Af{w})$-valued, instead of a morphism between vector spaces as when $\vect_K$-valued. In the next subsection, we study morphisms in $\rep(\Af{w})$ and provide a convenient way to describe them.

\subsection{Matrix Formalism}
\label{subsec:matrix_formalism}
Let us quickly review the matrix formalism introduced in the paper \cite{asashiba2018matrix}, which allows us to write a morphism $\phi:V\rightarrow W$ in $\rep\Af{w}$ in a matrix form. In this work, we only need to consider $V$ and $W$ equal to a direct sum of the interval representations, and this simplifies the presentation below.

Let $\phi:V\rightarrow W$ be a morphism in $\rep{\Af{w}}$ with $V$ and $W$ given as below:
\[
  \begin{tikzcd}[column sep=1.5em]
    V =
    \bigoplus\limits_{i=1}^m \intv[b_i,d_i] \rar{\phi} &
    \bigoplus\limits_{j=1}^{m'} \intv[b'_j,d'_j] = W.
  \end{tikzcd}
\]
Note that the intervals $\intv[b_i,d_i]$ for $i=1,\hdots,m$ (respectively $\intv[b'_j,d'_j]$ for $j=1,\hdots,m'$) may contain duplicates. This is not a problem.

Then $\phi$ can be written in a matrix form $\Phi=\left[\Phi_{j,i}\right]$ relative to these decompositions, with entries given by $\Phi_{j,i} = \pi_j \phi \iota_i$:
\[
  \begin{tikzcd}[column sep=1.5em]
    \intv[b_i,d_i] \rar{\iota_i} &
    \bigoplus\limits_{i=1}^m \intv[b_i,d_i] \rar{\phi} &
    \bigoplus\limits_{j=1}^{m'} \intv[b'_j,d'_j] \rar{\pi_j} &
    \intv[b'_j,d'_j]
  \end{tikzcd}
\]
where $\iota_i$ and $\pi_j$ are the evident inclusions and projections, respectively.
Instead of a scalar, each entry $\Phi_{j,i}$ in the matrix form is a morphism from $\intv[b_i,d_i]$ to $\intv[b'_j,d'_j]$. The following lemma tells us the possible morphisms between intervals.

\begin{lemma}[Lemma 1 of \cite{asashiba2018matrix}]
  \label{lem:homdim}
  Let $\intv[a,b],\intv[c,d]$ be interval representations of $\Af{w}$.
  \begin{enumerate}
  \item The dimension of $\Hom(\intv[a, b], \intv[c, d])$ as a $K$-vector space is either $0$ or $1$.
  \item There exists a canonical basis $\left\{f\itoi{a,b,c,d}\right\}$ for each nonzero $\Hom(\intv[a, b], \intv[c, d])$ such that
    \[
      (f\itoi{a,b,c,d})_i = \left\{
        \begin{array}{ll}
          1_K: K \rightarrow K, & \text{if }i \in [a,b] \cap [c,d] \\
          0, & \text{otherwise.}
        \end{array}
      \right.
    \]
    \item  In fact,
    \[
      \Hom(\intv[a,b],\intv[c,d])=
      \begin{cases}
        K f\itoi{a,b,c,d}, & c \leq a \leq d \leq b, \\
        0, & \text{otherwise}.
      \end{cases}
    \]
  \end{enumerate}
\end{lemma}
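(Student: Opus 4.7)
The plan is to unpack the definition of a morphism between the interval representations and analyze it componentwise. A morphism $\phi : \intv[a,b] \to \intv[c,d]$ is a family of $K$-linear maps $\phi_i : \intv[a,b](i) \to \intv[c,d](i)$ making the obvious naturality squares commute for every $i \leq i+1$. Since $\intv[a,b](i)$ equals $K$ if $i \in [a,b]$ and $0$ otherwise, and similarly for $\intv[c,d]$, only the components $\phi_i$ with $i$ in the intersection $[a,b] \cap [c,d]$ can possibly be nonzero, and each such component is multiplication by a scalar $\lambda_i \in K$. So at worst $\phi$ is determined by the tuple $(\lambda_i)_{i \in [a,b]\cap[c,d]}$, and the task is to determine which tuples arise.

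Next I would check the naturality square for each consecutive pair $i \leq i+1$. Inside the interior of the intersection, both structure maps are $1_K$, so commutativity forces $\lambda_{i+1} = \lambda_i$; thus all the $\lambda_i$ are a single scalar $\lambda$ on the (contiguous) intersection $[a,b] \cap [c,d] = [\max(a,c),\min(b,d)]$. This immediately gives that $\dim \Hom(\intv[a,b],\intv[c,d])$ is at most $1$, and that any nonzero morphism must look like the candidate $f\itoi{a,b,c,d}$ described in the statement, yielding parts (1) and (2) up to showing that the candidate is actually a well-defined morphism for the right choice of endpoints.

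The only remaining work, which will also establish part (3), is the boundary analysis. At the left end $i_0 = \max(a,c)$, the square $(i_0 - 1) \leq i_0$ produces an obstruction: if $i_0 - 1 \in [a,b] \setminus [c,d]$, then the top structure map is $1_K$ while the bottom is $0$, so commutativity forces $\lambda = 0$. This fails precisely when $\max(a,c) = a$, i.e., $c \leq a$. A symmetric argument at the right end $i_1 = \min(b,d)$ shows that the obstruction vanishes exactly when $d \leq b$. Combined with the nonemptiness of the intersection ($a \leq d$), this gives the condition $c \leq a \leq d \leq b$ of part (3); conversely, when this condition holds, setting every $\lambda_i = 1_K$ on the intersection and $0$ elsewhere trivially satisfies all naturality squares, so $f\itoi{a,b,c,d}$ is a genuine basis element. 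The main (very minor) obstacle is just being careful with the boundary cases $a = 0$ or $b = w-1$ where the ``missing'' neighbor disappears and no obstruction arises, but this is handled uniformly by the convention that $\intv[a,b](i) = 0$ for $i$ outside the ambient index set.
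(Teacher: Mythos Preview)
Your argument is correct and is the standard elementary computation: restrict to the intersection, use naturality on interior edges to collapse all scalars to a single $\lambda$, and then analyze the two boundary squares to obtain the condition $c \leq a \leq d \leq b$. The boundary analysis is accurate (at the right end the relevant obstruction is $i_1+1 \in [c,d]\setminus[a,b]$, which indeed occurs precisely when $b < d$, dual to your left-end case).

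There is nothing to compare against in the paper itself: the lemma is simply quoted from \cite{asashiba2018matrix} with no proof given, so your write-up in fact supplies what the paper omits. The later generalization to finite rectangles (Lemma~\ref{lem:homdim_rectangles}) is likewise only cited, and your componentwise argument extends verbatim to that setting by running it coordinatewise.
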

Going back to our morphism $\Phi = \left[\Phi_{j,i}\right]$ in matrix form, the entry at $(j,i)$ is given by
\[
  \Phi_{j,i} = c_{j,i} f\itoi{b_i,d_i,b'_j,d'_j}
\]
for some scalar $c_{j,i}\in K$ if the dimension of $\Hom(\intv[b_i, d_i], \intv[b'_j, d'_j])$ is nonzero; otherwise $\Phi_{j,i}$ is always $0$. As a shorthand, we hide the morphisms $f\itoi{b_i,d_i,b'_j,d'_j}$ whenever we display the matrix form of morphisms in $\rep\Af{w}$.

\subsection{Auslander-Reiten quivers}
We give a quick review of Auslander-Reiten quivers, which provide a very convenient framework to visualize our construction. By definition, the vertices of the Auslander-Reiten quiver are the indecomposable representations, and arrows are determined by the irreducible morphisms between indecomposables.
See the book \cite{assem2006elements}, for example, for a more detailed discussion.


For our purposes, we only need to consider the Auslander-Reiten quiver $\Gamma(\Afn)$ of $\Afn$, which takes on a relatively simple form. It is given by
\[
  \newcommand{\mcp}{\mathclap}
  \Gamma(\Af{n}):
  \scalebox{0.8}{
    \begin{tikzpicture}[baseline=(A.center)]
      \matrix (A) [matrix of math nodes, nodes in empty cells, ampersand replacement=\&,
      nodes={minimum height=3em, minimum width=3em, anchor=center}, inner sep=0.5em, outer sep = 0.5em,column sep=0pt,row sep=0pt]
      {
        \mcp{\intv[n',n']} \&                    \& \mcp{\intv[n'-1,n'-1]} \&                  \& \hdots           \&                  \& \mcp{\intv[1,1]} \&                  \& \mcp{\intv[0,0]}      \\
                         \& \mcp{\intv[n'-1,n']} \&                      \& \hdots \&                  \& \hdots \&                  \& \mcp{\intv[0,1]} \&                       \\
                         \&                    \& \ddots               \&                  \& \hdots \&                  \& \iddots \&                  \&                       \\
                         \&                    \&                      \& \mcp{\intv[1,n']} \&                  \& \mcp{\intv[0,n'-1]} \&                  \&                  \&                       \\
                         \&                    \&                      \&                  \& \mcp{\intv[0,n']} \&                  \&                  \&                  \&                       \\
                         \&                    \&                      \&                  \&                  \&                  \&                  \&                  \&                       \\
      };
      \foreach \y in {1,...,4} {
        \pgfmathtruncatemacro{\m}{5-\y};
        \foreach \l in {1,...,\m} {
          \pgfmathtruncatemacro{\x}{2*\l-1 + \y-1};
          \pgfmathtruncatemacro{\yu}{\y+1};
          \pgfmathtruncatemacro{\xu}{\x+1};
          \pgfmathtruncatemacro{\xuu}{\x+2};
          \path [<-] (A-\y-\x) edge (A-\yu-\xu);
          \path [<-] (A-\yu-\xu) edge (A-\y-\xuu);
        }
      }
      \path [->] (A-2-9.center) edge node[label={[rotate=45]below: increasing $d$}]{} (A-5-6.center);
      \path [->] (A-5-4.center) edge node[label={[rotate=-45]below: increasing $b$}] {} (A-2-1.center);
    \end{tikzpicture}
  }
\]
where $n' = n-1$ and where we adopt the convention of placing the simple representations (lifespan $0$ intervals) $\intv[b,b]$ on a horizontal line, with everything else below.

To visualize our construction, we shall abstractly represent the Auslander-Reiten quiver of $\Afn$ by a triangle:
\[
  \begin{tikzpicture}[baseline=(current bounding box.center)]
    \ardrawer{3}{20}{use as bounding box}
  \end{tikzpicture}
  \sim
  \begin{tikzpicture}[baseline=(current bounding box.center)]
    \pgfmathsetmacro{\x}{3*sqrt(2)/2};

    \pgfmathsetmacro{\xx}{2*\x};
    \filldraw[use as bounding box,fill=black!20] (0,0) -- (-\x,-\x) -- (-\xx,0);
  \end{tikzpicture}.
\]
Furthermore, we will need to freely extend the length of the underlying poset to some $w \geq n$. We will thus consider the Auslander-Reiten quiver of $\Afn$ as being embedded inside that of $\Af{w}$, like so:
\[
  \begin{tikzpicture}[scale=0.5]
    \arsimple{5}{fill=black!20}
    \arsimple{1}{fill=red!20}

    \draw [->] (0,0.25)node[above]{$0$} -- (-2, 0.25)node[above]{$n-1$} -- (-10,0.25)node[above]{$w-1$} -- (-10.5,0.25);
  \end{tikzpicture}
\]
where the top line keeps track of the simple representations, with $\intv[w-1,w-1]$ at the leftmost and $\intv[0,0]$ at the rightmost.


\section{Main Construction}\label{sec:results}
Before proving our main result, we introduce the following concept, which is essential in our proof for indecomposability. This is an offshoot of the idea behind the algebraic construction in \cite{buchet_et_al:socg}, where certain configurations of indecomposables were found.

\begin{definition}[Vertical]
  A collection of interval representations $\{\intv[b_i,d_i]\}_{i=1}^m$ is said to be \emph{vertical} if and only if all the pairs $(b_i,d_i)$ are distinct and there exists a constant $\mu$ such that $\mu = \frac{b_i+d_i}{2}$ for all $i \in \{1,\hdots,m\}$.
\end{definition}

If a collection of intervals is vertical, then they are all located along a vertical line on the Auslander-Reiten quiver, as follows:
\[
  \begin{tikzpicture}
    \arsimple{2}{fill=black!20}
    \draw[red!50, line width=2pt] (-1,0) -- (-1,-1);
  \end{tikzpicture}
\]
because of the way we illustrate them. The following (almost trivial) observation about vertical intervals forms an essential part of our construction. Note that the matrix form below is written using the matrix formalism as reviewed in Subsection~\ref{subsec:matrix_formalism}.
\begin{lemma}
  \label{lem:vertical}
  Let $W = \bigoplus_{i=1}^m \intv[b_i,d_i]$ be a representation of ${\Af{w}}$ such that $\{\intv[b_i,d_i]\}_{i=1}^m$ is vertical. Then the matrix form of any morphism $\phi: W\rightarrow W$ is diagonal:
  \[
    \phi =
    \left[
      \begin{array}{ccc}
        c_1 & & \\
            & \ddots & \\
            & & c_m
      \end{array}
    \right]
  \]
  for some scalars $c_1,c_2,\hdots, c_m \in K$ (recall our shorthand of hiding the morphisms $f\itoi{b_i,d_i,b_j,d_j}$).
\end{lemma}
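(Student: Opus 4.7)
The plan is to unpack the matrix form of $\phi$ entry-by-entry using Lemma~\ref{lem:homdim}, and show that the off-diagonal entries must vanish purely by the arithmetic of the vertical condition. Concretely, writing $\phi = [\phi_{j,i}]$ with $\phi_{j,i} \in \Hom(\intv[b_i,d_i], \intv[b_j,d_j])$, I need to argue that $\phi_{j,i} = 0$ whenever $i \neq j$, and that $\phi_{i,i}$ is a scalar multiple of $f\itoi{b_i,d_i,b_i,d_i}$ (which is the case automatically, since that Hom-space is one-dimensional).

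The main step is the off-diagonal vanishing. By Lemma~\ref{lem:homdim}(3), $\Hom(\intv[b_i,d_i], \intv[b_j,d_j])$ is nonzero only when
\[
  b_j \leq b_i \leq d_j \leq d_i.
\]
The verticality hypothesis gives $b_i + d_i = b_j + d_j = 2\mu$, so $d_j = 2\mu - b_j$ and $d_i = 2\mu - b_i$. Substituting into $d_j \leq d_i$ yields $2\mu - b_j \leq 2\mu - b_i$, i.e.\ $b_i \leq b_j$. Combined with $b_j \leq b_i$, this forces $b_i = b_j$, and then $d_i = d_j$. Since the pairs $(b_i, d_i)$ are assumed distinct, this can only happen when $i = j$.

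Therefore, for every $i \neq j$ we have $\Hom(\intv[b_i,d_i], \intv[b_j,d_j]) = 0$ and so $\phi_{j,i} = 0$. The diagonal entries $\phi_{i,i}$ lie in a one-dimensional space spanned by $f\itoi{b_i,d_i,b_i,d_i}$, so each one has the form $c_i f\itoi{b_i,d_i,b_i,d_i}$ for some $c_i \in K$. Using the shorthand convention of suppressing the $f\itoi{b_i,d_i,b_i,d_i}$, we obtain the claimed diagonal matrix. There is no real obstacle here; the content of the lemma is essentially the one-line observation that the nesting condition $b_j \leq b_i \leq d_j \leq d_i$ collapses to equality when constrained to a level set of $b + d$.
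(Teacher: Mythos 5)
Your proof is correct and follows essentially the same route as the paper: both invoke part 3 of Lemma~\ref{lem:homdim} and use the constraint $b_i+d_i=b_j+d_j=2\mu$ to collapse the nesting condition $b_j \leq b_i \leq d_j \leq d_i$ to equality, contradicting distinctness of the pairs. The only cosmetic difference is that the paper parametrizes the intervals as $b_i=\mu-h_i$, $d_i=\mu+h_i$ with distinct $h_i$, whereas you substitute $d=2\mu-b$ directly; the argument is the same.
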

\begin{proof}
  Since $\{\intv[b_i,d_i]\}_{i=1}^m$ is vertical, there exists $\mu$ as in the definition and \emph{distinct} numbers $h_i$ such that $b_i = \mu - h_i$ and $d_i = \mu + h_i$ for all $i$.

  For $i\neq j$, it follows from part \textit{3} of Lemma~\ref{lem:homdim} that
  \[
    \Hom(\intv[b_i,d_i],\intv[b_j,d_j]) = 0
  \]
  since the inequality $b_j \leq b_i \leq d_j \leq d_i$ cannot be satisfied. Otherwise, we have
  \[
    \mu - h_j \leq \mu - h_i \leq \mu + h_j \leq \mu + h_i
  \]
  implying $h_j \geq h_i$ and $h_j \leq h_i$, and so $h_j = h_i$. This is a contradiction since $h_i$ and $h_j$ are supposed to be distinct. Thus, off-diagonal terms are always zero. Terms on the diagonal are some constant $c_i \in K$ times $f\itoi{b_i,d_i,b_i,d_i}$, which are identity maps.
\end{proof}

We reproduce here our Theorem~\ref{thm:main} and provide a proof.

\noindent\parbox{\textwidth}{\mainthm*}
\begin{proof}
Without loss of generality, we identify $V = \bigoplus_{i=1}^m \intv[b_i,d_i]$.
The proof takes the form of a construction process composed of three parts: separate-and-shift, verticalize, and coning.

\paragraph{Separate-and-shift} We separate our intervals into distinct death-indices with the extra condition that these indices are large enough so that the next step is possible.

  More precisely, we choose $m$ distinct integers $d'_i$ such that
  \begin{enumerate}
  \item $d'_i \geq d_i$ for all $i$ and
  \item $\frac{b_j + d'_j}{2} \leq d'_i$ for all pairs $i$ and $j$.
  \end{enumerate}
  The first condition is clearly possible to satisfy. For the second condition, choose distinct $d''_i$ satisfying the first, and let $\ell = \max_i(b_i + d''_i)$ and define $d'_i \defeq d''_i + \ell$. Then
  \[
    \frac{b_j + d'_j}{2} = \frac{b_j + d''_j}{2} + \frac{\ell}{2} \leq \ell \leq d''_i + \ell = d'_i
  \]
  for all pairs $i,j$, showing that both conditions can be satisfied by $m$ distinct integers $d_i'$.

  We then choose an integer $w \geq n$ such that $w-1 \geq d'_i$ for all $i$. For example, $w = \max\{n-1,\max\limits_i(d'_i)\} + 1$ will do. We have the inclusion $\Afn \subset \Af{w}$, so that we can consider $V \in \rep\Afn \hookrightarrow \rep\Af{w}$ as a representation of $\Af{w}$, essentially by padding $0$'s from vertices $n$ to $w-1$. Further constructions will take place in $\rep\Af{w}$.

  By construction, there exists a nonzero morphism $f\itoi{b_i,d'_i,b_i,d_i}: \intv[b_i, d'_i] \rightarrow \intv[b_i, d_i]$ for each $i \in \{1,\hdots,m\}$, which are in fact surjections. We form the representation
  $
  V' \defeq \bigoplus\limits_{i=1}^m \intv[b_i, d'_i]
  $
  and the morphism
  \begin{equation}
    \label{eq:separate_shift}
    \begin{tikzcd}[ampersand replacement=\&, column sep=large]
      V' \rar{\smat{1 & & \\ & \ddots & \\ & & 1}} \& V
    \end{tikzcd}
  \end{equation}
  (written in the matrix formalism). The diagonal entries can indeed be nonzero because of the existence of the nonzero morphisms.

  The matrix form given in Diagram~\eqref{eq:separate_shift} is not the identity matrix and does not represent the identity morphism. The entries are morphisms between intervals, and we are hiding the factors $f\itoi{b_i,d'_i,b_i,d_i}$ for simplicity.

  Pictorially, we think of $V = \bigoplus_{i=1}^m \intv[b_i,d_i]$ as a multiset of points on the Auslander-Reiten quiver $\Gamma(\Afn)$ of $\Afn$, which is considered as a subquiver of $\Gamma(\Af{w})$. Separate-and-shift moves each point (treating multiple copies of the same point as different) to a different death-index $d$. We illustrate this procedure as follows:
  \[
    \begin{tikzpicture}[scale=0.8]
      \arsimple{5}{fill=black!20}
      \arsimple{1}{fill=red!20}

      \draw [->] (0,0.25)node[above]{$0$} -- (-2, 0.25)node[above]{$n-1$} -- (-10,0.25)node[above]{$w-1$} -- (-10.5,0.25);

      \begin{scope}[every node/.style={shape=circle,fill,minimum size=2pt,inner sep=0, outer sep=2pt}]
        \path {
          (-0.5,-0.25) node (a1) {}
          ++(-3,-3) node (a2) {}
        };
        \path[draw,red,-stealth] (a1) edge (a2);

        \path {
          (-0.5,-0.25) node (a1) {}
          ++(-4,-4) node (a2) {}
        };
        \path[draw,red, -stealth] (a1) edge[bend left=5] (a2);

        \path {
          (-1.7,-0.15) node (a1) {}
          ++(-3,-3) node (a2) {}
        };
        \path[draw,red, -stealth] (a1) edge (a2);

        \path {
          (-1.3,-0.5) node (a1) {}
          ++(-4,-4) node (a2) {}
        };
        \path[draw,red, -stealth] (a1) edge (a2);
      \end{scope}

      \path [->] (0,-0.5) edge node[label={[rotate=45]below: increasing $d$}]{} +(-5,-5);
      \path [->] (-5.25,-5.25) edge node[label={[rotate=-45]below: increasing $b$}]{} +(-5,+5);

    \end{tikzpicture}
  \]
  where the rightmost point appears with multiplicity $2$, and each copy is brought to a different death index.

\paragraph {Verticalize} We move our separated-and-shifted intervals into a vertical by increasing birth indices. The ``shift'' part (the second condition) of the previous step ensures that the increased birth indices do not go beyond the death indices.

Let $\mu = \max(\frac{b_i+d'_i}{2})$, the maximum of the midpoints of the intervals $\intv[b_i,d'_i]$. Note that by second condition of the previous step, $\mu \leq d'_i$ for all $i$. We define $b'_i \defeq 2 \mu - d'_i$ for all $i$, which can be visualized as ``pivoting'' $d'_i$ around $\mu$ to get $b'_i$. Note that while $\mu$ may be a half-integer, $b'_i$ so defined is always an integer.

It follows that
\begin{equation}
  \label{eq:b_condition}
 	b_i \leq b'_i \leq \mu \leq d'_j
\end{equation}
for all pairs $i,j$. This inequality is very important for our construction, and we will refer again to this fact later.

In particular, we see that $b'_i \leq d'_i$ for all $i$. Thus, the intervals $\intv[b'_i, d'_i]$ are non-empty and distinct (since $d'_i$ are), and $\{\intv[b'_i,d'_i]\}_{i=1}^m$ is vertical with common midpoint $\mu = \frac{b'_i + d'_i}{2}$ as given. Furthermore, Ineq.~\eqref{eq:b_condition} says that the common midpoint $\mu$ shared by all $\intv[b'_i,d'_i]$ serves as a wall separating all birth indices $b'_i$ and all death indices $d'_j$.

Since $b'_i \geq b_i$, there exists a nonzero
$f\itoi{b'_i,d'_i,b_i,d'_i}: \intv[b'_i, d'_i] \rightarrow \intv[b_i, d'_i]$ for each $i$, which are in fact inclusions. We form the representation
$
\widebar{V} \defeq \bigoplus\limits_{i=1}^m \intv[b'_i, d'_i]
$
and the morphism
\begin{equation}
  \label{eq:verticalize}
  \begin{tikzcd}[ampersand replacement=\&, column sep=large]
    \widebar{V} \rar{\smat{1 & & \\ & \ddots & \\ & & 1}} \& V'.
  \end{tikzcd}
\end{equation}

We illustrate this step by the following figure:
\[
  \begin{tikzpicture}[scale=0.8]
    \arsimple{5}{fill=black!20}
    \arsimple{1}{fill=red!20}

    \draw [->] (0,0.25)node[above]{$0$} -- (-2, 0.25)node[above]{$n-1$} -- (-10,0.25)node[above]{$w-1$} -- (-10.5,0.25);

    \begin{scope}[every node/.style={shape=circle,fill,minimum size=2pt,inner sep=0, outer sep=2pt}]
      \path {
        (-0.5,-0.25) node (a1) {}
        ++(-3,-3) node (a2) {}
        ++(-3,+3) node (a3) {}
      };
      \path[draw,red,-stealth] (a2) edge (a3);

      \path {
        (-0.5,-0.25) node (a1) {}
        ++(-4,-4) node (a2) {}
        ++(-2,2) node (a3) {}
      };
      \path[draw,red, -stealth] (a2) edge (a3);

      \path {
        (-1.7,-0.15) node (a1) {}
        ++(-3,-3) node (a2) {}
        ++(-1.8,1.8) node (a3) {}
      };
      \path[draw,red, -stealth] (a2) edge (a3);

      \path {
        (-1.3,-0.5) node (a1) {}
        ++(-4,-4) node (a2) {}
        ++(-1.2,1.2) node (a3) {}
      };
      \path[draw,red, -stealth] (a2) edge (a3);
    \end{scope}

    \path [->] (0,-0.5) edge node[label={[rotate=45]below: increasing $d$}]{} +(-5,-5);
    \path [->] (-5.25,-5.25) edge node[label={[rotate=-45]below: increasing $b$}]{} +(-5,+5);

  \end{tikzpicture}
\]

\paragraph{Coning} There exists
\[
  I_V \defeq \intv[\max_i(b'_i), \max_j(d'_j)]
\]
with nonzero morphisms to all $\intv[b'_j,d'_j]$ by using Lemma~\ref{lem:homdim} and the fact that $\max\limits_i(b'_i) \leq d_j'$ for all $j$ from Ineq.~\eqref{eq:b_condition}. We can thus construct the morphism
\[
  \begin{tikzcd}[ampersand replacement=\&, column sep=large]
    I_V \rar{\smat{1\\\vdots\\1}} \&
    \widebar{V}
  \end{tikzcd}
\]
with nonzero entries.

\paragraph{Constructed object and proof of indecomposability}
Our construction is summarized in the following picture:
\[
  \begin{tikzpicture}
    \arsimple{5}{fill=black!20}
    \arsimple{1}{fill=red!20}

    \draw [->] (0,0.25)node[above]{$0$} -- (-2, 0.25)node[above]{$n-1$} -- (-10,0.25)node[above]{$w-1$} -- (-10.5,0.25);

    \begin{scope}[every node/.style={shape=circle,fill,minimum size=2pt,inner sep=0, outer sep=2pt}]
      \node (x) at (-8.025,-1.775) {};
      \path {
        (-0.5,-0.25) node (a1) {}
        ++(-3,-3) node (a2) {}
        ++(-3,+3) node (a3) {}
      };
      \path[draw,red,-stealth] (a1) edge (a2) (a2) edge (a3) (a3) edge (x);

      \path {
        (-0.5,-0.25) node (b1) {}
        ++(-4,-4) node (b2) {}
        ++(-2,2) node (b3) {}
      };
      \path[draw,red, -stealth] (b1) edge[bend left=5] (b2) (b2) edge (b3) (b3) edge (x);

      \path {
        (-1.7,-0.15) node (c1) {}
        ++(-3,-3) node (c2) {}
        ++(-1.8,1.8) node (c3) {}
      };
      \path[draw,red, -stealth] (c1) edge (c2) (c2) edge (c3) (c3) edge (x);

      \path {
        (-1.3,-0.5) node (d1) {}
        ++(-4,-4) node (d2) {}
        ++(-1.2,1.2) node (d3) {}
      };
      \path[draw,red, -stealth] (d1) edge (d2) (d2) edge (d3) (d3) edge (x);
    \end{scope}

    \node [label=above:$I_V$] at (x) {};
    \node[rounded corners, draw=black, fit=(a3) (b3) (c3) (d3)](FIt1) {};
    \node[rounded corners, inner sep=5pt, rotate fit=45, draw=black, fit=(a2) (b2) (c2) (d2)](FIt2) {};

    \node [anchor=north east, inner sep=5pt] at (x) {conification};
    \node [anchor=north east, inner sep=5pt] at (d3) {verticalization};
    \node [anchor=north west, inner sep=5pt] at (b2) {separation and shifting};
    \node [anchor=north west, inner sep=5pt] at (a1) {original};


  \end{tikzpicture}
\]
and we obtain $S\in \fun{\Af{4}}{\rep\Af{w}}$, a $\rep\Af{w}$-valued representation of
\[
  \Af{4}:
  \begin{tikzcd}[column sep=large]
    0 \rar & 1 \rar & 2 \rar & 3
  \end{tikzcd},
\]
determined by
\begin{equation}
  \label{eq:master_construction}
  S:
  \begin{tikzcd}[ampersand replacement=\&, column sep=large]
   	I_V \rar{\smat{1\\\vdots\\1}} \&
    \widebar{V} \rar{\smat{1 & & \\ & \ddots & \\ & & 1}} \&
    V' \rar{\smat{1 & & \\ & \ddots & \\ & & 1}} \&
    V
  \end{tikzcd}
\end{equation}
where $S(3)\defeq V$, $S(2) \defeq V'  = \bigoplus_{i=1}^m \intv[b_i,d'_i]$ is obtained by separate-and-shift, $S(1) \defeq \widebar{V} = \bigoplus_{i=1}^m \intv[b'_i,d'_i]$ by verticalization (of $V'$), and $S(0) \defeq I_V$ by coning.

Let us show that $S$ is indecomposable. Any $\phi \in \End(S)$ is given by a collection of maps $(\phi_0, \phi_1, \phi_2, \phi_3)$ making the diagram
\begin{equation}
\begin{tikzcd}
	I_V \rar \dar{\phi_0} & \widebar{V} \rar \dar{\phi_1} & V' \rar\dar{\phi_2} & V \dar{\phi_3}\\
    I_V \rar & \widebar{V} \rar & V' \rar & V
\end{tikzcd}
\label{eq:endo_comm}
\end{equation}
commutative.

Since the summands of $\widebar{V}$ are vertical, $\phi_1$ has matrix form
\begin{equation}
  \label{eq:diag}
 \left[
 \begin{array}{ccc}
 c_1 & & \\
 & \ddots & \\
 & & c_m
 \end{array}
 \right]
\end{equation}
for some scalars $c_1,c_2,\hdots, c_m \in K$ by Lemma~\ref{lem:vertical}.

By commutativity of the second and third boxes in Diagram~\eqref{eq:endo_comm}, the scalars in the matrix forms of $\phi_2$ and $\phi_3$ are nonzero only along the diagonal and with scalars along the diagonal given by $c_1,\hdots, c_m$. This involves some technical details, which we delay to Lemma~\ref{lem:prop}, where  Ineq.~\eqref{eq:b_condition} is essential.

By the commutativity of the first box in Diagram~\eqref{eq:endo_comm}, we get $c_1 = c_2 = \hdots = c_m = \phi_0 \in K$. Thus, $\End(S) \cong K$ is local and so $S$ is indecomposable.

We have shown that $S \in \fun{\Af{4}}{\rep{\Af{w}}}$ is indecomposable. By vertical stacking Lemma~\ref{lem:vertical_stacking}, this can be viewed as an indecomposable representation of the $4 \times w$ commutative grid $\Gf{4,w}$. This can be viewed (by padding with zeroes), as an indecomposable representation $S$ of $\Gf{}$ with finite support.

The original representation $V$ is obtained by line restriction of $S$ to the horizontal line $L:\mathbb{Z}\rightarrow\mathbb{Z}\times\mathbb{Z}$ that runs along the $4$th row.
\end{proof}

Finally, let us prove the technical details used to prove the form of the endomorphisms of $S$.
\newpage
\begin{lemma}[Propagation of nonzeroes]
\label{lem:prop}
\leavevmode
\begin{enumerate}
\item
Let $\widebar{V} = \bigoplus\limits_{i=1}^m \intv[b'_i, d'_i]$ and
$V' = \bigoplus\limits_{i=1}^m \intv[b_i, d'_i]$ such that $b_i \leq b'_i \leq \min_j(d'_j)$ for all $i$, and let $\iota \defeq \smat{1 & & \\ & \ddots & \\ & & 1}:\widebar{V}\rightarrow V'$ be the morphism given in matrix form with only nonzeros in the diagonal given by the inclusions $f\itoi{b'_i,d'_i,b_i,d'_i} : \intv[b'_i, d'_i]\hookrightarrow \intv[b_i,d'_i]$, as above. Suppose that
\begin{equation}
  \label{lem:prop_comm}
  \begin{tikzcd}[ampersand replacement=\&, column sep=large]
    \widebar{V}
    \rar{\iota}
    \dar[swap]{\phi_1 = \smat{c_1 & & \\ & \ddots & \\ & & c_m}}  \&
    V'\dar{\phi_2 = \left[a_{j,i}f\itoi{b_i,d'_i,b_j,d'_j}\right]} \\
    \widebar{V}
    \rar[swap]{\iota} \&
    V'
  \end{tikzcd}
\end{equation}
is commutative. Then the matrix form of $\phi_2$ is $\smat{c_1 & & \\ & \ddots & \\ & & c_m}$.
\item
Let $V' = \bigoplus\limits_{i=1}^m \intv[b_i, d'_i]$ and $V = \bigoplus\limits_{i=1}^m \intv[b_i, d_i]$
such that $d'_i \geq d_i$ for all $i$, and let $\pi \defeq \smat{1 & & \\ & \ddots & \\ & & 1}:V'\rightarrow V$ be the morphism given in matrix form with only nonzeros in the diagonal given by the projections $f\itoi{b_i,d'_i,b_i,d_i} : \intv[b_i, d'_i]\twoheadrightarrow \intv[b_i,d_i]$, as above. Suppose that
\[
\begin{tikzcd}[ampersand replacement=\&, column sep=large]
V'
\rar{\pi}
\dar[swap]{\smat{c_1 & & \\ & \ddots & \\ & & c_m}}  \&
V\dar{\phi_3 = \left[a_{j,i}f\itoi{b_i,d_i,b_j,d_j}\right]} \\
V'
\rar[swap]{\pi} \&
V
\end{tikzcd}
\]
is commutative. Then the matrix form of $\phi_3$ is $\smat{c_1 & & \\ & \ddots & \\ & & c_m}$.
\end{enumerate}
\end{lemma}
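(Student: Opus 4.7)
My plan is a direct entrywise computation in the matrix formalism of Subsection~\ref{subsec:matrix_formalism}. The two parts are structurally identical, so I would write out Part~1 in detail and then observe that Part~2 follows by the same argument with the inclusion $\iota$ replaced by the projection $\pi$ and with the hypothesis $b_i \leq b'_i \leq \min_k d'_k$ replaced by $d_i \leq d'_i$ playing the analogous role.

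The first step is to expand both sides of the commutativity $\iota \phi_1 = \phi_2 \iota$ as matrices. Because $\iota$ and $\phi_1$ are both diagonal, the left-hand side is diagonal, with $(i,i)$-entry equal to $c_i\, f\itoi{b'_i,d'_i,b_i,d'_i}$; the $(j,i)$-entry of the right-hand side equals $a_{j,i}$ times the composition $f\itoi{b_i,d'_i,b_j,d'_j} \circ f\itoi{b'_i,d'_i,b_i,d'_i}$. The second step is to evaluate this composition using part~2 of Lemma~\ref{lem:homdim}, which says that a composition of canonical basis morphisms either vanishes or equals the canonical morphism with the outer endpoints, the latter occurring precisely when the target hom-space $\Hom(\intv[b'_i,d'_i], \intv[b_j,d'_j])$ is itself nonzero. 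The hypothesis $b_i \leq b'_i \leq \min_k d'_k$ is engineered exactly to guarantee these nonvanishing conditions for every pair $(j,i)$ for which $\phi_{2,j,i}$ is a priori nonzero, i.e.\ whenever $b_j \leq b_i \leq d'_j \leq d'_i$.

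Comparing entries then finishes the argument: on the diagonal we read off $a_{i,i} = c_i$, while for $j \neq i$ the left-hand side is zero and the right-hand side is $a_{j,i}$ times a nonzero canonical morphism, forcing $a_{j,i} = 0$. The one point demanding care — and really the only obstacle — is the bookkeeping of the four-index intervals through each composition, making sure that the intermediate interval always contains the overlap of the outer ones so that part~2 of Lemma~\ref{lem:homdim} applies as expected; but the hypotheses of each part have been tailored precisely so that all the required containments hold.
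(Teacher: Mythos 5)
Your proposal is correct and follows essentially the same route as the paper's own proof: expand $\iota\phi_1 = \phi_2\iota$ entrywise in the matrix formalism, use Lemma~\ref{lem:homdim} to check that for every position where $a_{j,i}$ can appear the composite $f\itoi{b_i,d'_i,b_j,d'_j}f\itoi{b'_i,d'_i,b_i,d'_i}$ (respectively its analogue in part~2) stays nonzero thanks to $b'_i \le \min_k d'_k$ (respectively $d_i \le d'_i$), and then compare with the diagonal left-hand side to force $a_{i,i}=c_i$ and $a_{j,i}=0$ off the diagonal. Your observation that the composite of two nonzero canonical morphisms is nonzero exactly when the hom-space between the outer intervals is nonzero is a correct and slightly more explicit packaging of the pointwise check the paper performs.
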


Before giving the proof, we give an example where the conclusion of part \textit{1} fails to hold if the condition $b'_i \leq \min_j(d'_j)$ for all $i$ is not satisfied. If $\widebar{V}$ is vertical, then this condition is automatically satisfied, so we construct an example where this is not the case. We let
  $
    V' = \intv[1,2] \oplus \intv[1,1]
  $
  and
  $
    \widebar{V} = \intv[2,2] \oplus \intv[1,1].
  $
  The death-indices are fixed at $2$ and $ 1$, while $b_1 = 1 \leq 2 = b'_1$ and $b_2 = 1 \leq 1 = b'_2$. Note that $b'_1 = 2 \not\leq 1 = \min_j(d'_j)$. The commutative diagram~\eqref{lem:prop_comm} becomes
  \[
    \begin{tikzcd}[ampersand replacement=\&, column sep=huge, row sep=large]
      \intv[2,2] \oplus \intv[1,1]
      \rar{\iota\defeq\smat{1 f\itoi{2,2,1,2} & 0 \\ 0 & 1 f\itoi{1,1,1,1} }}
      \dar[swap]{\phi_1\defeq\smat{c_1\id_{\intv[2,2]} & 0 \\ 0 & c_2\id_{\intv[1,1]}}}  \&
      \intv[1,2] \oplus \intv[1,1]
      \dar{\phi_2 \defeq \smat{a_{11}f\itoi{1,2,1,2} & 0 \\ a_{21}f\itoi{1,2,1,1} & a_{22}f\itoi{1,1,1,1}}} \\
      \intv[2,2] \oplus \intv[1,1]
      \rar[swap]{\iota}  \&
      \intv[1,2] \oplus \intv[1,1]
    \end{tikzcd}.
  \]
  Note that $f\itoi{a,b,a,b} = \id_{\intv[a,b]}$ is the identity morphism.

  The composition $\iota \phi_1$ is
  \begin{equation}
    \label{eq:comm_right}
    \smat{1 f\itoi{2,2,1,2} & 0 \\ 0 & 1 f\itoi{1,1,1,1}}
    \smat{c_1\id_{\intv[2,2]} & 0 \\ 0 & c_2\id_{\intv[1,1]}}
    =
    \smat{c_1f\itoi{2,2,1,2} & 0 \\ 0 & c_2f\itoi{1,1,1,1}}.
  \end{equation}
  However, the composition $\phi_2 \iota$ is
  \begin{equation}
    \label{eq:comm_left}
    \smat{a_{11}f\itoi{1,2,1,2} & 0 \\ a_{21}f\itoi{1,2,1,1} & a_{22}f\itoi{1,1,1,1}}
    \smat{1 f\itoi{2,2,1,2} & 0 \\ 0 & 1 f\itoi{1,1,1,1}}
    =
    \smat{a_{11}f\itoi{2,2,1,2} & 0 \\ a_{21}f\itoi{1,2,1,1}f\itoi{2,2,1,2} & a_{22}f\itoi{1,1,1,1}}
    =
    \smat{a_{11}f\itoi{2,2,1,2} & 0 \\ 0 & a_{22}f\itoi{1,1,1,1}}
  \end{equation}
  where the lower-left entry is equal to $0$ since the composition $f\itoi{1,2,1,1}f\itoi{2,2,1,2} = 0$. This can be illustrated by the following.
  \[
    \begin{tikzpicture}
      \begin{scope}[every node/.style={shape=circle,fill,minimum size=4pt,inner sep=0, outer sep=4pt}]
        \node (a22) at (2,1) {};
        \draw[thick] (0,0)node(a12a){} -- (2,0)node(a12b){};
        \node (a11) at (0,-1) {};
      \end{scope}

      \draw[->] (a22) -- (a12b);
      \draw[->] (a12a) -- (a11);

      \node (a) at (-2,1) {$\intv[2,2]:$};
      \node (b) at (-2,0) {$\intv[1,2]:$};
      \node (c) at (-2,-1) {$\intv[1,1]:$};

      \draw[->] (a) --node[anchor=east]{$f\itoi{2,2,1,2}$} (b);
      \draw[->] (b) --node[anchor=east]{$f\itoi{1,2,1,1}$} (c);
      \draw [stealth-stealth] (-0.5,-1.3) -- (2.5,-1.3);
      \draw[thick] (0,-1.2) -- (0,-1.4) node[anchor=north]{$\scriptsize{1}$};
      \draw[thick] (2,-1.2) -- (2,-1.4) node[anchor=north]{$\scriptsize{2}$};

    \end{tikzpicture}
  \]
  The birth index $2$ has exceeded the smallest death index $1$, and so the composition ``falls through'' and fails to be nonzero.

  The commutativity of the diagram implies the equality of Eq.~\eqref{eq:comm_right} and Eq.~\eqref{eq:comm_left}, from which we obtain $a_{11} = c_1$ and $a_{22} = c_2$, but $a_{21}$ is not determined. Thus, we cannot conclude that the matrix form of $\phi_2$ is the same diagonal.

\begin{proof}[Proof of Lemma~\ref{lem:prop}]
  The matrix formalism hides the morphisms $f\itoi{a,b,c,d}$, and the composition of two nonzero composable $f\itoi{a,b,c,d}$ can possibly be zero. What is important for this lemma is that the compositions should be nonzero where needed, so that the commutativity relation forces the scalars $a_{j,i}$ to be zero off-diagonal and the same as $c_j$ along the diagonal.

\begin{enumerate}
\item
The composition $\phi_2 \iota$ is equal to
\[
	\left[ (a_{j,i} f\itoi{b_i,d'_i,b_j,d'_j}) ( 1 f\itoi{b'_i,d'_i,b_i,d'_i}) \right].
\]
The entries $(j,i)$ with $f\itoi{b_i,d'_i,b_j,d'_j}$ nonzero are given by
$
	b_j \leq b_i \leq d'_j \leq d'_i
$
by Lemma~\ref{lem:homdim} (these are the entries where $a_{j,i}$ are present). For each such entry, the corresponding morphism in the composition $\phi_2\iota$ is given by
\[
	f\itoi{b_i,d'_i,b_j,d'_j}  f\itoi{b'_i,d'_i,b_i,d'_i}:\intv[b'_i,d'_i] \rightarrow \intv[b_i,d'_i] \xrightarrow{f\itoi{b_i,d'_i,b_j,d'_j} \neq 0}\intv[b_j,d'_j].
\]
Using Lemma~\ref{lem:homdim} and the fact that $b'_i \leq \min_j(d'_j) \leq d'_j$, we see that this composition is nonzero.

Thus, $\phi_2\iota$ has scalars in its matrix form the same as $\phi_2$ (nonzeros cannot go to zeros). Equating with the other composition $\iota\phi_1$ in the commutative diagram, $a_{j,j} = c_j$ for all $j$ and $a_{j,i} = 0$ for $i\neq j$, as required.

\item The composition $\phi_3 \pi$ is equal to
\[
	\left[ (a_{j,i} f\itoi{b_i,d_i,b_j,d_j}) ( 1 f\itoi{b_i,d'_i,b_i,d_i}) \right].
\]
The entries $(j,i)$ with $f\itoi{b_i,d_i,b_j,d_j}$ nonzero are given by
$
	b_j \leq b_i \leq d_j \leq d_i
$
by Lemma~\ref{lem:homdim}. In these entries, the composition
\[
	\intv[b_i,d'_i] \rightarrow \intv[b_i,d_i] \xrightarrow{f\itoi{b_i,d_i,b_j,d_j} \neq 0}\intv[b_j,d_j]
\]
is also nonzero by Lemma~\ref{lem:homdim} and the fact that $d_i \leq d'_j$. Then the conclusion is obtained by a similar argument as above.
\end{enumerate}
\end{proof}

\paragraph{Dual Construction} We note that a dual construction is possible. Instead of constructing to the left, we can proceed to the right and obtain
\[
\begin{tikzcd}[ampersand replacement=\&, column sep=large]
	V \rar{\smat{1 & & \\ & \ddots & \\ & & 1}} \&
    V'' \rar{\smat{1 & & \\ & \ddots & \\ & & 1}} \&
    \widebar{\widebar{V}} \rar{\smat{1 & \hdots & 1}} \&
    I'_V
\end{tikzcd}
\]
where $V''  = \bigoplus_{i=1}^m \intv[b'_i,d_i]$ is obtained by separate-and-shift choosing smaller (possibly negative) birth-indices that are distinct, and $\widebar{\widebar{V}} = \bigoplus_{i=1}^m \intv[b'_i,d'_i]$ by verticalization, and $I'_V$ by coning. Note that in this case, the correct condition to use for separate-and-shift is to have $b'_i$ all distinct with $b'_i \leq b_i$ for all $i$ and $b_j' \leq \frac{b_i'+d_i}{2}$ for all pairs $i,j$.
Then we use $\mu\defeq\min\left(\frac{b_i'+d_i}{2}\right)$ and $d_i' \defeq 2\mu-b_i'$ for verticalization. Coning is given by $I'_V \defeq \intv[\min{b_i'},\min{d'_j}]$. Here, we can check that $b_j' \leq d_i' \leq d_i$ for all pairs $i,j$, which allows the $1$'s in the matrix form of the morphism to $I'_V$, and enables the dual version of propagation of nonzeros.

\begin{remark}
  \label{rem:shape}
  Note that the primal and dual constructions can be combined into one, yielding the following object
  \[
  	\begin{tikzcd}
      I_V \rar & \widebar{V} \rar & V' \rar & V \rar & V'' \rar & \widebar{\widebar{V}} \rar & I'_V
    \end{tikzcd}
  \]
  of $\fun{\Af{7}}{\rep\Af{w}}$, where $V'$ and $V''$ are obtained from separate-and-shift to the right and left, $\widebar{V}$ and $\widebar{\widebar{V}}$ from verticalization, and $I_V$ and $I'_V$ from coning. This corresponds to a $2$D persistence module by
  Stacking Lemma~\ref{lem:vertical_stacking}.

  We note that the support of the obtained module is contained in a rectangle, with the lower right and upper left corners both supporting one-dimensional vector spaces. Moreover its endomorphism ring is isomorphic to $K$.
  In general, we call $2$D persistence modules with these properties \emph{candy modules} as they are wrapped like candies with two handles we can manipulate.
  These modules will play a central role in the construction of section~\ref{sec:lotr}.
\end{remark}

\section{Consequences on the homology of supports}\label{sec:zoology}

One consequence of our construction is the easy construction of examples of indecomposable representations of $\Gf{}$ (indecomposable $2$D persistence modules) whose support has a ``hole''. We contrast this with interval representations, a restricted class of representations that has been studied before \cite{bjerkevik2016stability,dey2018computing}. By definition, an \emph{interval representation} associates to each vertex a vector space with dimension at most $1$, its internal linear maps are always identity between non-zero vector spaces, and its support satisfies a convexity and connectedness condition. In particular, the support of any interval representation of $\Gf{m,n}$ always has a distinctive staircase shape~\cite{thin-decomposability}.

\subsection{Indecomposables with holes in support}
Note that while the technical details of our construction use nonnegative indices, it still works after shifting all indices. Let $V=\intv[-1,-1]\bigoplus\intv[1,1]$, whose form is chosen for symmetry.
We follow Remark~\ref{rem:shape} and obtain by the primal and dual construction:
\[
  H:
	\begin{tikzcd}
    I_V \rar & \widebar{V} \rar & V' \rar & V \rar & V'' \rar & \widebar{\widebar{V}} \rar & I'_V
  \end{tikzcd}
\]
corresponding to the $2$D persistence module
\begin{equation}
  \label{eq:beast1}
  \begin{tikzcd}[ampersand replacement=\&, column sep=1em]
    K \rar \& K \rar \& K \\
    K \rar \uar \& K \rar{\smat{1\\0}} \uar \& K^2 \rar{\smat{1 & 0}} \uar{\smat{1 & 1}} \& K \rar \& K\\
    K \rar \uar \& K \rar{\smat{1\\0}} \uar \& K^2 \rar \uar \& K^2 \rar \uar{\smat{1 & 0}} \& K^2 \rar{\smat{0 & 1}} \uar{\smat{1 & 0}} \& K \rar \& K \\
    \& \& \& \& K \rar \uar{\smat{1 \\ 0}} \& \textcolor{red}{0} \rar \uar \& K \uar \\
    \& \& \& \& K \rar\uar \& K \rar{\smat{1\\0}}\uar \& K^2 \rar\uar{\smat{0 & 1}} \& K^2 \rar \& K^2 \rar{\smat{0 & 1}} \& K \rar \& K\\
    \& \& \& \& \& \& K \rar\uar{\smat{0\\1}} \& K \rar{\smat{0\\1}}\uar{\smat{0\\1}} \& K^2 \rar{\smat{0 & 1}}\uar \& K \rar\uar \& K\uar \\
    \& \& \& \& \& \& \& \& K \rar \uar{\smat{1\\1}} \& K\rar\uar \& K\uar
  \end{tikzcd}
\end{equation}
where all but one zero vector space outside the support  are omitted for the sake of clarity. Maps between identical vector spaces are to be understood as the identity maps.
Then by the arguments in the proof for Theorem~\ref{thm:main} and the dual construction, we see that $\End(H)\cong K$, and thus $H$ is indecomposable. Moreover its support has a hole as the central vector space is $0$.

One way to formally define the notion of the hole in the support is to consider the cubical complex induced by the support of a $2$D persistence module $V$  as follows. Recall that the support of $V$ is the set of points $\supp(V) = \{x \in \Gf{} \suchthat V(x) \neq 0\}$. We then build a cubical complex by adding an edge between $x$ and $y$ whenever $x,y\in\supp(V)$ and $x$ and $y$ are adjacent on the $2$D grid. Whenever we have four points $(i,j),(i,j+1),(i+1,j+1),(i+1,j) \in \supp(V)$ for some $i,j\in\mathbb{Z}$, we add a unit square with those corner vertices. This is the clique-cubical complex of $\supp(V)$.
We then consider holes in the support as being non-trivial classes in the $1$-dimensional homology of this cubical complex. We say that the support of a $2$D persistence module has $n$ holes if the $1$-dimensional homology of the cubical complex built on the support has $n$ linearly independent classes.
It is clear that the representation given in Diagram~\eqref{eq:beast1} has a hole in this sense.

Our construction can then be used to generate indecomposable $2$D persistence modules having an arbitrary finite number of holes in its support.
\begin{corollary}\label{cor:holes}
  Let $n \in \{0,1,2,\hdots\}$. There exists an indecomposable $2$D persistence module whose support has exactly $n$ holes.
\end{corollary}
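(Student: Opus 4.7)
I would prove Corollary~\ref{cor:holes} by direct construction. For $n=0$, any interval representation of $\Gf{}$ with a single-point support works; it is trivially indecomposable and its support has no holes. For $n \geq 1$, let
\[
  V_n \defeq \bigoplus_{i=1}^{n+1} \intv[2i,2i],
\]
a $1$D persistence module whose support is the $(n+1)$-point set $\{2, 4, \ldots, 2(n+1)\}$, separated by $n$ one-unit gaps. Applying the combined primal-and-dual construction of Remark~\ref{rem:shape} yields $H_n \in \rep\Gf{}$ whose endomorphism ring is $K$ by the proof of Theorem~\ref{thm:main} and its dual, so $H_n$ is indecomposable.

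It remains to verify that $\supp(H_n)$ has exactly $n$ holes. By Stacking Lemma~\ref{lem:vertical_stacking} and Remark~\ref{rem:shape}, $H_n$ is realised as a stack of seven rows $I_{V_n}, \widebar{V_n}, V_n', V_n, V_n'', \widebar{\widebar{V_n}}, I_{V_n}'$. The central row $V_n$ has the $(n+1)$-point support above. Each of the other six rows has support equal to a single contiguous interval of integers: for $V_n'$, the separate-and-shift condition forces $d_i' \geq \max_j b_j$, so every summand $\intv[2i, d_i']$ contains $[2i, 2(n+1)]$ and the union of supports is an interval; for $\widebar{V_n}$, the summands are intervals nested about the common midpoint $\mu$, so the union equals the outermost one; and $I_{V_n}$ is an interval by definition. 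The three rows below $V_n$ are handled dually.

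I would then conclude via a direct Euler-characteristic computation on the clique-cubical complex of $\supp(H_n)$. Each ``handle'' (three outer rows of intervals together with the $n+1$ isolated points of the central row, linked by vertical edges at those points) is connected and contractible: consecutive handle rows overlap in at least two consecutive integers (by the separate-and-shift and verticalize inequalities), so the handle's complex is built from filled $2 \times 2$ squares along overlap regions attached to a staircase without enclosed holes, and has Euler characteristic $1$. The two handles intersect exactly in the $n+1$ central points with no edges or squares between them, so Mayer--Vietoris gives
\[
  \chi(\supp(H_n)) = 1 + 1 - (n+1) = 1 - n,
\]
and since $\supp(H_n)$ is connected, $\dim H_1(\supp(H_n)) = n$.

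The main obstacle is rigorously establishing the contractibility of each handle; one must exclude any subtle hole inside the staircase. This reduces to a bookkeeping check that consecutive handle rows overlap in at least two consecutive integers, using the explicit inequalities produced by separate-and-shift and verticalize. For the explicit choice $d_i' = D + i$ with $D$ sufficiently large compared to $n$, every such overlap is wide, eliminating any possibility of unwanted holes.
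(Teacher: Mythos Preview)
Your construction is the paper's: take $V=\bigoplus_{i}\intv[2i,2i]$ with $n+1$ summands and apply the combined primal--dual construction of Remark~\ref{rem:shape}, citing Theorem~\ref{thm:main} and its dual for $\End(H)\cong K$. The paper runs $n=0$ through the same construction rather than treating it separately, but that is cosmetic.

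Where you diverge is in verifying that $\supp(H)$ has exactly $n$ holes. You split $\supp(H)$ into two four-row ``handles'' meeting in the $n+1$ isolated points of $\supp(V)$ and compute $\chi=1-n$ via Mayer--Vietoris. The paper instead remarks that the construction realises the topological suspension of $\supp(V)$, converting its $n+1$ components directly into $n$ one-cycles. Both arguments are correct.

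Your justification for handle contractibility can be sharpened, and the ``main obstacle'' you flag is a red herring. The condition that consecutive rows overlap in at least two integers is not what excludes holes, and widening overlaps by a special choice of $d'_i$ is unnecessary. Once you have established that each of $\supp(I_V)$, $\supp(\widebar{V})$, $\supp(V')$ is a single interval, no hole can occur among those three rows: a bounded complementary region would force some row to contain two points but omit one in between. The paper makes this transparent by a stronger observation you could use directly: these three row-supports share the common right endpoint $\max_j d'_j$ and are nested ($\supp(I_V)\subset\supp(\widebar{V})\subset\supp(V')$), so the block is a right-justified staircase. Dually the left endpoints of $V'',\widebar{\widebar{V}},I'_V$ are aligned. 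With this, your Euler-characteristic computation goes through with no bookkeeping at all.
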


\begin{proof}
  Let $n$ be as given, and fix $V=\bigoplus_{i=0}^n \intv[2i,2i]$.

  Note that $\supp V$ has exactly $n+1$ connected components.
  Again we define the indecomposable representation
  \[
    H:
    \begin{tikzcd}
      I_V \rar & \widebar{V} \rar & V' \rar & V \rar & V'' \rar & \widebar{\widebar{V}} \rar & I'_V
    \end{tikzcd}
  \]
  by using the construction in Theorem~\ref{thm:main} and its dual. By the Stacking Lemma~\ref{lem:vertical_stacking}, $H$ is viewed as a $2$D persistence module.

  By construction, each representation $X \in \{V',V'',\widebar{V},\widebar{\widebar{V}},I_V,I'_V\}$ has connected  support because all of the intervals composing $X$ have a common index (depending on $X$) where they are non-zero. Moreover, the right extremities of $V'$, $\widebar{V}$ and $I_V$ are aligned, and thus the cubical complex of the support of $I_V\rightarrow \widebar{V}\rightarrow V'$ is contractible.
  Similarly the left extremities of $V''$, $\widebar{\widebar{V}}$ and $I'_V$ are also aligned, and the cubical complex of the support of $V''\rightarrow\widebar{\widebar{V}}\rightarrow I'_V$ is also contractible.

  Finally, the support of $V$ is included in both the support of $V'$ and $V''$, ensuring that every neighboring pair of connected components in the support of $V$ can be used to generate a non-trivial $1$-homology class in the cubical complex of $\supp(H)$. There are $n$ such pairs, and it is clear that $H$ has exactly $n$ holes in its support.

  Below, we illustrate the clique cubical complex of the support of $H$.
  \[
  \begin{tikzpicture}[scale=0.5]
    \tikzset{dotstyle/.style={circle,fill=black,inner sep=0pt,minimum size=3pt}}
    \draw[gray!50] (-8,0) grid (14,6);

    \draw[thick] (0,3) node[dotstyle]{} -- +(0,-1) node[dotstyle]{} -- +(0,1) node[dotstyle]{};
    \draw[thick] (2,3) node[dotstyle]{} -- +(0,-1) node[dotstyle]{} -- +(0,1) node[dotstyle]{};
    \draw[thick] (6,3) node[dotstyle]{} -- +(0,-1) node[dotstyle]{} -- +(0,1) node[dotstyle]{};
    \node at (4,3){$\cdots$};

    \draw[thick] (0,2) -- (14,2);
    \draw[thick] (-8,4) -- (6,4);

    \draw[thick, fill=gray] (6,1)node[dotstyle]{} -- (14,1) node[dotstyle]{} -- (14,2)node[dotstyle]{} -- (6,2)node[dotstyle]{} -- cycle;

    \draw[thick, fill=gray] (0,4)node[dotstyle]{} -- (-8,4) node[dotstyle]{} -- (-8,5)node[dotstyle]{} -- (0,5)node[dotstyle]{} -- cycle;

    \draw[thick, fill=gray] (10,0)node[dotstyle]{} -- (14,0) node[dotstyle]{} -- (14,1)node[dotstyle]{} -- (10,1)node[dotstyle]{} -- cycle;

    \draw[thick, fill=gray] (-4,5)node[dotstyle]{} -- (-8,5) node[dotstyle]{} -- (-8,6)node[dotstyle]{} -- (-4,6)node[dotstyle]{} -- cycle;

    \node[anchor=north] at (-8,0){$-4n$};
    \node[anchor=north] at (-6,0){\vphantom{$2n$}$\cdots$};
    \node[anchor=north] at (-4,0){$-2n$};
    \node[anchor=north] at (-2,0){\vphantom{$2n$}$\cdots$};
    \node[anchor=north] at (0,0){$0$};
    \node[anchor=north] at (2,0){$2$};
    \node[anchor=north] at (4,0){\vphantom{$2n$}$\cdots$};
    \node[anchor=north] at (6,0){$2n$};
    \node[anchor=north] at (8,0){\vphantom{$2n$}$\cdots$};
    \node[anchor=north] at (10,0){$4n$};
    \node[anchor=north] at (12,0){\vphantom{$2n$}$\cdots$};
    \node[anchor=north] at (14,0){$6n$};

    \foreach \L [count=\h from 0] in {I_V,\widebar{V},V',V,V'',\widebar{\widebar{V}},I'_V}{
      \node at (15,\h){$\L$};
    }
  \end{tikzpicture}
\]
Note that this realizes a topological suspension of the support of $V$.
\end{proof}

\subsection{A minimal construction}
The construction used to prove Theorem~\ref{thm:main} works for general $1$D persistence modules $V$, but does not consider any size minimality for particular cases of $V$. For example, the persistence module in Diagram~\eqref{eq:beast1} is not the smallest example of an indecomposable $2$D persistence module whose support has a hole. Indeed, using only the primal construction to obtain the bottom half of Diagram~\eqref{eq:beast1}, and ``capping off'' the top of $V$ with an interval would have yielded an example with smaller support.

Before discussing a minimal example in Theorem~\ref{thm:smallest}, let us study a local construction that we use repeatedly for its proof. Consider the family ${\cal P}$ of $2$D persistence modules whose support is contained in a small ``cross'' shape. The small cross shape is the $5$ vertices $a$, $b$, $c$, $d$, $e$ (filled-in circles) and filled-in arrows $\alpha$, $\beta$, $\gamma$, $\delta$:
\begin{equation}
  \label{eq:cross_string_alg}
  \begin{tikzcd}
    & \vdots & \vdots & \vdots &  \\
    \cdots \arrow[gray,r] & \circ \arrow[gray,r] \arrow[gray,u] & \bullet \mathrlap{d} \arrow[gray,r] \arrow[gray,u] & \circ \arrow[gray,r] \arrow[gray,u] & \cdots \\
    \cdots \arrow[gray,r] & \mathllap{b} \bullet \arrow[gray,u] \arrow[r,"\alpha"] \ar[ur, no head, dashed, bend right=20]{} & \bullet\mathrlap{e} \arrow[r,"\delta"] \arrow[u,"\beta"] & \bullet\mathrlap{c} \arrow[gray,u] \arrow[gray,r] & \cdots \\
    \cdots \arrow[gray,r] & \circ \arrow[gray,u] \arrow[gray,r] & \bullet \mathrlap{a} \arrow[u,"\gamma"] \arrow[gray,r] \ar[ur, no head, dashed, bend left=20]{} & \circ \arrow[gray,u] \arrow[gray,r] & \cdots \\
    & \vdots \arrow[gray,u] & \vdots \arrow[gray,u] & \vdots \arrow[gray,u] &
  \end{tikzcd}
\end{equation}
where the unfilled circles and grayed-out arrows are not part of the support.

Consider $M \in \mathcal{P}$. Since the upper-left and lower-right vertex are not part of the support, $M$ associates the zero vector space to those vertices. By commutativity relations, the compositions $M(\beta)M(\alpha)$ and $M(\delta)M(\gamma)$ must be zero. These zero relations are denoted by the dashed lines. Thus, we can also treat $M\in \mathcal{P}$ as a representation of the corresponding bound quiver.

\begin{restatable}{lemma}{crosslem}
  \label{lem:cross}
	There is no indecomposable $T \in \cal P$ whose support contains both the bottom and right vertices ($a$ and $c$) or both the left and top vertices ($b$ and $d$).
\end{restatable}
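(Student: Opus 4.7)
The plan is to argue by contradiction: assume $T \in \mathcal{P}$ is indecomposable with both $T(a)\neq 0$ and $T(c)\neq 0$, and produce a nontrivial direct-sum decomposition of $T$.

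First, I would make two preliminary reductions. If $T(\gamma)$ is not injective, then any $1$-dimensional subspace of $\ker T(\gamma)$ splits off as a summand isomorphic to the simple representation $S_a$ (i.e.\ $K$ at $a$ and zero elsewhere); dually, if $T(\delta)$ is not surjective, a copy of $S_c$ splits off. So we may assume $T(\gamma)$ is injective and $T(\delta)$ is surjective, which combined with the zero relation $T(\delta)T(\gamma)=0$ gives $\mathrm{Im}\, T(\gamma)\subseteq \ker T(\delta)\subsetneq T(e)$.

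Next, I would split into three cases based on whether the compositions $T(\beta)T(\gamma)$ and $T(\delta)T(\alpha)$ vanish. In Case (i), when $T(\beta)T(\gamma)\neq 0$, I pick $a_0\in T(a)$ with $T(\beta)T(\gamma)(a_0)\neq 0$ and aim to split off the $3$-vertex subrepresentation $L$ defined by $L(a)=Ka_0$, $L(e)=K\cdot T(\gamma)(a_0)$, $L(d)=K\cdot T(\beta)T(\gamma)(a_0)$, and zero elsewhere. The crucial observation enabling this is that $\mathrm{Im}\, T(\alpha)\cap K\cdot T(\gamma)(a_0)=0$: any nonzero $v = T(\alpha)(b_0) = \lambda\, T(\gamma)(a_0)$ in the intersection would satisfy $0=T(\beta)T(\alpha)(b_0) = \lambda\, T(\beta)T(\gamma)(a_0)\neq 0$, an absurdity. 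Choosing complements $A'\supseteq \ker T(\beta)T(\gamma)$ of $Ka_0$ in $T(a)$ and $E_2\supseteq \ker T(\beta)$ of $L(e)$ in $T(e)$, this observation together with $T(\beta)T(\alpha)=0$ ensures $L$ is a direct summand, and the residual piece still contains $T(c)\neq 0$. Case (ii), when $T(\delta)T(\alpha)\neq 0$, is dual and extracts the analogous summand supported on $\{b,e,c\}$ while the residual still contains $T(a)$. In Case (iii), when both compositions vanish, $\mathrm{Im}\, T(\gamma)+\mathrm{Im}\, T(\alpha)\subseteq \ker T(\beta)\cap \ker T(\delta)$; setting $E_1=\mathrm{Im}\, T(\gamma)+\mathrm{Im}\, T(\alpha)$ and any complement $E_2$ in $T(e)$ splits $T$ cleanly into an \emph{incoming} summand supported on $\{a,b,e\}$ and an \emph{outgoing} summand supported on $\{c,d,e\}$, both nonzero.

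In every case $T$ decomposes nontrivially, contradicting indecomposability. The symmetric claim for $b$ and $d$ follows by the same argument after interchanging the two zero-relation pairs $(\gamma,\delta)$ and $(\alpha,\beta)$. The main obstacle lies in Cases (i) and (ii): constructing the complements at $T(e)$ and $T(d)$ that simultaneously respect all four structure maps. The zero-relations force exactly the intersection conditions (notably $\mathrm{Im}\, T(\alpha)\cap K\cdot T(\gamma)(a_0)=0$) needed for the extraction to be well-defined, and verifying the analogous compatibility for $T(\beta)$ on the complementary $E_2$ requires the careful choice of $E_2\supseteq \ker T(\beta)$ above.
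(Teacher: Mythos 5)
Your overall strategy --- a case analysis on the composites $T(\beta)T(\gamma)$ and $T(\delta)T(\alpha)$, splitting off a three-vertex string summand in Cases (i)--(ii) and splitting $T(e)$ into ``incoming'' plus a complement in Case (iii) --- is sound, and it is a genuinely different route from the paper's proof, which writes down a single global decomposition of all five vector spaces via iterated isomorphism-theorem refinements. Case (iii) as you state it is complete. The gap is in the key step of Case (i) (and its dual, Case (ii)): choosing $A'\supseteq\ker T(\beta)T(\gamma)$ and $E_2\supseteq\ker T(\beta)$ \emph{independently} does not ensure that $(A',\,T(b),\,E_2,\,T(c),\,D_2)$ is a subrepresentation, because nothing in these conditions forces $T(\gamma)(A')\subseteq E_2$. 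Concretely, let $T$ be the direct sum of two copies of the string module $K\xrightarrow{1}K\xrightarrow{1}K$ supported on $a,e,d$, so $T(a)=T(e)=T(d)=K^2$, $T(b)=T(c)=0$, and $T(\gamma),T(\beta)$ are identities; take a basis $a_0,a_1$ of $T(a)$, and choose $A'=K(a_0+a_1)$, $E_2=K\,T(\gamma)(a_1)$. Both contain the (zero) kernels you prescribe, yet $T(\gamma)(a_0+a_1)\notin E_2$, so your candidate complement is not closed under $T(\gamma)$ and $L$ has not been exhibited as a summand. Note also that your ``crucial observation'' $\Ima T(\alpha)\cap K\,T(\gamma)(a_0)=0$ only concerns the arrow $\alpha$, and is in any case subsumed by the stronger containment $\Ima T(\alpha)\subseteq\ker T(\beta)\subseteq E_2$; the arrows that actually need care are $\gamma$ (at $e$) and $\beta$ (at $d$, where you never specify $D_2$).

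The gap is fixable, but the complements must be chosen in a correlated, not independent, way: first pick $A'\supseteq\ker T(\beta)T(\gamma)$ with $Ka_0\oplus A'=T(a)$; then pick $E_2$ to be a complement of $K\,T(\gamma)(a_0)$ containing $\ker T(\beta)+T(\gamma)(A')$ --- possible because if $\lambda\,T(\gamma)(a_0)=k+T(\gamma)(a')$ with $k\in\ker T(\beta)$ and $a'\in A'$, then applying $T(\beta)$ gives $T(\beta)T(\gamma)(a'-\lambda a_0)=0$, so $a'-\lambda a_0\in\ker T(\beta)T(\gamma)\subseteq A'$, forcing $\lambda=0$ since $A'\cap Ka_0=0$; finally pick $D_2$ to be a complement of $K\,T(\beta)T(\gamma)(a_0)$ containing $T(\beta)(E_2)$ --- possible because $\ker T(\beta)\subseteq E_2$ and $E_2\cap K\,T(\gamma)(a_0)=0$ give $T(\beta)(E_2)\cap K\,T(\beta)T(\gamma)(a_0)=0$. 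With these choices the compatibilities for all four arrows hold ($\alpha$ via $\Ima T(\alpha)\subseteq\ker T(\beta)\subseteq E_2$, $\delta$ trivially since $L'(c)=T(c)$), the decomposition $T=L\oplus L'$ is genuine, and your contradiction goes through; as written, however, the assertion that the stated kernel-containments alone ``ensure $L$ is a direct summand'' is false, and supplying these correlated choices and verifications is precisely the missing content.
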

We provide a proof in the Appendix.

\begin{remark}
  Note that the quiver formed by the filled-in vertices and arrows in Diagram~\eqref{eq:cross_string_alg}, together with the relations $\delta\gamma=0$ and $\beta\alpha=0$, generates what is known as a string algebra. Lemma~\ref{lem:cross} in fact follows immediately from a known characterization of indecomposable representations of string algebras as the string modules and band modules \cite{butler1987auslander}.
\end{remark}

The important consequence of Lemma~\ref{lem:cross} is that given any $2$D persistence module $M$ on such a cross shape with zero relations as in Diagram~\eqref{eq:cross_string_alg}, bases for its vector spaces can be chosen that allows for a decomposition $M = M_1 \oplus M_2$ into the direct sum of two representations such that $M_1(a) = 0$ and $M_2(c) = 0$, or symmetrically $M=M'_1 \oplus M'_2$ with $M'_1(b) = 0$ and $M'_2(d) = 0$.

\begin{theorem}
  \label{thm:smallest}
  The minimal number of vertices in the support of an indecomposable $2$D persistence module whose support has a hole is equal to $11$. For example, the $2$D persistence module
  \[
    M:
    \begin{tikzcd}[ampersand replacement=\&]
      K \rar{1} \& K \rar \& K \\
      K \rar \uar{0} \& \textcolor{red}{0} \rar \uar \& K \uar \\
      K \rar \uar \& K \rar{\smat{1\\0}}\uar \& K^2 \rar{\smat{1 & 0}}\uar{\smat{0 & 1}} \& K\\
      \& \& K \rar \uar{\smat{1\\1}} \& K \uar
    \end{tikzcd}
  \]
  realizes this minimum.
\end{theorem}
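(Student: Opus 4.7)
My plan is to prove this in two parts: an upper bound by verifying the displayed module $M$, and a matching lower bound by structural analysis that leverages Lemma~\ref{lem:cross}.

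For the upper bound, I would first count the nonzero entries of $M$ row by row to get $3 + 2 + 4 + 2 = 11$. The clique-cubical complex of $\supp M$ has a nontrivial $1$-cycle around the displayed red $0$: all eight of its neighbours are in the support, yet none of the four unit squares touching it can be filled since the $0$ itself is a corner of each. Indecomposability reduces to $\End(M) \cong K$. The only higher-rank vector space is the $K^2$, and the four adjacent nonidentity maps $\smat{1\\0}$, $\smat{1\\1}$, $\smat{1 & 0}$, $\smat{0 & 1}$ force any endomorphism $\phi$ to preserve three distinct lines in $K^2$ (two as images of the incoming maps, two as kernels of the outgoing maps), so $\phi|_{K^2}$ must act as a single scalar $\lambda$. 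This $\lambda$ then propagates via the remaining structure maps, each an identity between copies of $K$, giving $\phi = \lambda \cdot \id$ everywhere and thus $\End(M) \cong K$.

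For the lower bound, I would argue by contradiction: suppose $N$ is indecomposable with at least one hole and $|\supp N| \le 10$. A hole yields a nontrivial $1$-cycle in the clique-cubical complex of $\supp N$; I would represent it by a cycle enclosing a minimal missing region, which must in fact be a single missing vertex $h$, since any strictly larger missing region demands a strictly longer enclosing cycle. Because the four unit squares incident to $h$ cannot be filled, the enclosing cycle is forced to pass through all four cardinal \emph{and} all four diagonal neighbours of $h$. These eight vertices must lie in $\supp N$, leaving a budget of at most two extra support vertices.

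The core of the lower bound is then to rule out every configuration of at most two extra vertices around this eight-vertex ring. At each diagonal corner $e \in \{h + (\pm 1, \pm 1)\}$, the two cardinal neighbours of $e$ in the ring play the role of two of $a,b,c,d$ in a cross centred at $e$, while the two remaining cross positions lie outside the ring; for most placements of at most two extras, both the upper-left and lower-right of some such cross lie outside $\supp N$, and Lemma~\ref{lem:cross} supplies a nontrivial splitting of the cross subrepresentation. A dihedral symmetry argument around $h$ reduces the residual placements of the extras to a short explicit list, each handled by exhibiting a concrete idempotent. The main obstacle I expect is promoting a cross-local splitting to a global direct-sum decomposition of $N$, since the extra vertices can carry structure maps that might obstruct naive extension; I would handle this by the propagation-of-nonzeros reasoning from Lemma~\ref{lem:prop}, showing that the idempotent supplied by Lemma~\ref{lem:cross} intertwines with the remaining structure maps of $N$ once the eight ring vertices and the zero vertex $h$ are taken into account. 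Combined with the explicit example, this yields the claimed minimum of $11$.
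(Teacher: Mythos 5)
Your upper-bound argument is fine and matches the paper's (count the $11$ support vertices, observe the $8$-ring around the zero gives a nontrivial $1$-cycle, and check $\End(M)\cong K$ by a direct computation at the $K^2$ vertex followed by scalar propagation). The problem is in the lower bound, and it is a genuine gap rather than a presentational one. You reduce to the case of a single missing vertex $h$ by asserting that ``any strictly larger missing region demands a strictly longer enclosing cycle.'' That is true but does not give the reduction you want: a missing \emph{domino} (two adjacent missing vertices) is enclosed by a ring of exactly $10$ support vertices, which still fits your budget $|\supp N|\le 10$. In that configuration there is no single missing vertex all eight of whose neighbours lie in the support (each missing vertex has a missing neighbour), so none of your ``$8$-ring plus at most two extras'' cases covers it, and your case analysis silently omits it. The paper treats exactly this situation as the first sub-case of the $10$-vertex analysis (the hole ``made wider,'' with two zero vector spaces in the centre), handled by the same kernel/coimage splitting as the $8$-vertex case; your proof needs an analogous argument for the $10$-ring before the claimed minimum of $11$ follows.

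Two smaller points. First, the equioriented grid does not carry a full dihedral symmetry group: only the reflection across the main diagonal and the combination of $180^\circ$ rotation with duality preserve the category, so the symmetry reduction of the ``extra vertex'' placements has to be phrased in those terms (the paper instead enumerates the placements directly, including the delicate sub-cases where the two extras are adjacent and either force a new commutativity relation or an arrow in the opposite direction). Second, Lemma~\ref{lem:prop} is about matrix forms of morphisms between interval-decomposed $1$D modules and is not the right tool for promoting a cross-local splitting to a global direct-sum decomposition; what the paper does instead is construct explicit subrepresentations $T_A$ and $T_B$ (via kernels, images and coimages at the two partial crosses, extended by zero or by images at the extra vertices) and verify directly that they are subrepresentations summing to $T$. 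Your globalization step should be rewritten along those lines.
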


\begin{proof}
	The support of $M$ contains exactly $11$ vertices and has a hole.
	An elementary computation shows that the endomorphism ring of $M$ is isomorphic to $K$, and thus $M$ is indecomposable.
	It is then sufficient to prove that there is no indecomposable $2$D persistence module whose support has a hole and contains $10$ or less vertices. The existence of a hole requires at least $8$ vertices in the support.

	\paragraph{$8$ vertices} Suppose that $T$ is an indecomposable $2$D persistence module whose support has exactly $8$ vertices and has a hole. Then $T$ is of the form:
	\[
    T:
    \begin{tikzcd}[ampersand replacement=\&]
      X_2 \rar{f_3} \& X_3\rar{f_4} \& B \\
      X_1 \rar{} \uar{f_2} \& 0 \uar \rar \& Y_3\uar{g_4} \\
      A \uar{f_1}\rar{g_1} \& Y_1 \uar \rar{g_2} \& Y_2\uar{g_3}
    \end{tikzcd}
	\]
	By commutativity, both $f_3f_2$ and $g_3g_2$ are zero maps.
	We can thus construct the following:
	\[
    T_A:
    \begin{tikzcd}[ampersand replacement=\&,column sep=small]
			\Ker(f_3) \rar \& 0\rar \& 0 \\
			X_1 \rar{} \uar{f_2} \& 0 \uar \rar \& 0\uar \\
			A \uar{f_1}\rar{g_1} \& Y_1 \uar \rar{g_2} \& \Ker(g_3)\uar
    \end{tikzcd}
    \hspace{1em}
    T_B:
    \begin{tikzcd}[ampersand replacement=\&,column sep=small]
			\Coim(f_3) \rar{\hat{f_3}} \& X_3\rar{f_4} \& B \\
			0 \rar \uar \& 0 \uar \rar \& Y_3\uar{g_4} \\
			0 \uar\rar \& 0 \uar \rar \& \Coim(g_3)\uar{\hat{g_3}}
    \end{tikzcd}
	\]
  where $\hat{f_3}$ and $\hat{g_3}$ are the induced maps on the respective coimages.
	It is a simple matter to check that $T\cong T_A\oplus T_B$. Since the support of $T$ has exactly $8$ vertices, $A$ and $B$ are nonzero, and so $T_A$ and $T_B$ are nonzero. This contradicts the fact that $T$ is indecomposable.

	In order to prepare for the remaining cases, let us rephrase this argument using Lemma~\ref{lem:cross} and its proof. First, we view $T$ as a representation of
  \begin{equation}
    \label{eq:eighthole}
    \begin{tikzcd}[ampersand replacement=\&]
      x_2 \rar \& x_3\rar \& b \\
      x_1 \uar \ar[ur, no head, dashed, bend left=20]{} \&    \& y_3\uar \\
      a \uar\rar \& y_1 \rar \ar[ur, no head, dashed, bend right=20]{} \& y_2\uar
    \end{tikzcd}.
  \end{equation}
  We then restrict our attention to the ``partial crosses'' centered at $x_2$ and $y_2$, and use a similar argument as in the proof of Lemma~\ref{lem:cross}. By choices of bases, we obtain summands where the vector spaces at vertices $x_1$ and $x_3$ are not both nonzero, and where the vector spaces at vertices $y_1$ and $y_3$ are not both nonzero.

  These choices can be done simultaneously for the two partial crosses, as they share no vertices.
  In general, such a ``local'' decomposition does not induce a decomposition of a representation as a whole.

  However, in this case, we can build the following two subrepresentations $T_A$ and $T_B$ of $T$.
  For $T_A$, we take the decomposition summands over the two crosses which are non-zero at vertices $x_1$ and $y_1$, and then add the vector space $A$ at vertex $a$ with the linear maps from $A$ to $X_1$ and $A$ to $Y_1$. Using Lemma~\ref{lem:cross} ensures that we can simply fix the vector spaces of $T_A$ at $x_3$, $y_3$ and $b$ to be $0$, and that the result is indeed a subrepresentation. Note that we obtain the same $T_A$ as defined previously.
  Symmetrically, we obtain $T_B$, and it is a simple matter to show that $T \cong T_A\oplus T_B$.

	\paragraph{$9$ vertices} A support with $9$ vertices and containing a hole can only be obtained by adding one vertex somewhere on the outside of Diagram~\eqref{eq:eighthole}. Depending on where these vertices are added, an appropriate zero relation needs to be imposed in order to ensure the embeddability in the equioriented commutative $2$D grid. For example, with $z$ the additional vertex in the support, the underlying bound quiver becomes
  \begin{equation}
    \begin{tikzcd}[ampersand replacement=\&]
      x_2 \rar \& x_3\rar \& b  \\
      x_1 \uar \ar[ur, no head, dashed, bend left=20]{} \&    \& y_3\uar \\
      a \uar\rar \& y_1 \rar \ar[ur, no head, dashed, bend right=20]{} \& y_2 \uar \\
      \& z \uar \ar[ur, no head, dashed, bend left=20]{} \&
    \end{tikzcd}
    \label{diag:nine_vertices}
  \end{equation}
  where we still have partial crosses centered at $x_2$ and $y_2$. The same argument as the one in the case of $8$ vertices, using these two partial crosses, shows that a $2$D persistence module with such a support cannot be indecomposable.

  Note that the extra vector space at vertex $z$ is handled in two different ways depending on the position of $z$. If $z$ is part of one of the partial crosses at $x_2$ or $y_2$ then it is automatically split in a direct sum of vector spaces, one being part of $T_A$ and the other part of $T_B$ by the use of Lemma~\ref{lem:cross}.

  If $z$ is not part of one of the crosses (for example Diagram~\eqref{diag:nine_vertices}), then the vector space on its adjacent vertex ($y_1$ in Diagram~\eqref{diag:nine_vertices}) is split into a part in $T_A$ and a part in $T_B$. By construction, at least one of those must be $0$. The vector space at vertex $z$ together with the map connecting it to the rest of the diagram can be added to the subrepresentation with the not necessarily zero part. For example, in Diagram~\eqref{diag:nine_vertices}, the subrepresentation $T_B$ is $0$ at vertex $y_1$. The vector space and linear map of $T$ associated to $z$ can thus be added to the $T_A$ obtained in the $8$ vertices case while guaranteeing that $T_A$ is still a subrepresentation.

	\paragraph{$10$ vertices} The case with $10$ vertices can be obtained in two ways.

	First, the hole in Diagram~\eqref{eq:eighthole} can be made wider by adding two vertices (and corresponding arrows) to the support, for example between $x_3$ and $b$ and between $y_1$ and $y_2$. This means that the $2$D persistence module will need to encompass two $0$ vector spaces in its center. The proof for this case is similar to the one for $8$ vertices.

	The second case is obtained when we add two vertices (and corresponding arrows) to Diagram~\eqref{eq:eighthole} on the outside.
  Depending on where these vertices are added, zero relations or commutativity relations need to be imposed in order to ensure the embeddability in the equioriented commutative $2$D grid, as we shall see below.

  Thus we further subdivide this case. First, in the case that the two new vertices are not adjacent, we only need to deal with new zero relations. For example, both vertices can be attached at corner $y_2$, as below:
  \[
    \begin{tikzcd}[ampersand replacement=\&]
      x_2 \rar \& x_3\rar \& b \& \\
      x_1 \uar \ar[ur, no head, dashed, bend left=20]{} \&    \& y_3\uar \& \\
      a \uar\rar \& y_1 \rar \ar[ur, no head, dashed, bend right=20]{} \& y_2 \uar \rar \& y_4. \\
      \& \& y_5\uar\ar[ur, no head, dashed, bend left=20]{} \&
    \end{tikzcd}
  \]
  The argument using the partial crosses can be used as before.

  A second possibility is that the two new vertices are adjacent, for example:
  \begin{equation}
    \label{diag:hole_comm_rel}
    \begin{tikzcd}[ampersand replacement=\&]
      x_2 \rar \& x_3\rar \& b \& \\
      x_1 \uar \ar[ur, no head, dashed, bend left=20]{} \&    \& y_3\uar \rar \& y_6 \\
      a \uar\rar \& y_1 \rar \ar[ur, no head, dashed, bend right=20]{} \& y_2 \uar \rar \& y_4\uar
    \end{tikzcd}
  \end{equation}
  where we need to impose the commutative relation in the right square.

  Even in this case, a variant of the argument in Lemma~\ref{lem:cross} is still feasible. Suppose a $2$D persistence module $T$ has Diagram~\eqref{diag:hole_comm_rel} as its support. We construct subrepresentations $T_A$ and $T_B$. For $T_A$, its vector spaces at vertices $x_2$, $x_1$, $a$, and $y_1$ are obtained as before, using the same partial cross argument around vertex $x_2$.

  We set the vector space $T_A(y_2)$ of $T_A$ at vertex $y_2$ as the image of $T_A(y_1) \defeq T(y_1)$ under the linear map of $T$ from $y_1$ to $y_2$.  Similarly, the vector space $T_A(y_4)$ is the image of $T(y_1)$ under the composition of maps from $y_1$ to $y_2$ to $y_4$.

  By commutativity and the zero relation, $T_A(y_4)$ is in the kernel of the map from $y_4$ to $y_6$.
  Thus, $T_A(y_3)$ and $T_A(y_6)$ can be set to $0$, along with $T_A(x_3)$ and $T_A(b)$ as before, while guaranteeing that $T_A$ is indeed a subrepresentation of $T$.

  Similarly, a subrepresentation $T_B$ that is $0$ on $a$, $x_1$, $y_1$ can be constructed such that $T\cong T_A\oplus T_B$. As before, we note that $T_A$ is nonzero at $a$ and $T_B$ is nonzero at $b$, showing that $T$ is in fact decomposable.

  Finally, it is possible to have adjacent new vertices without inducing a commutative relation either by having an arrow elongating a path such as:
  \[
    \begin{tikzcd}[ampersand replacement=\&]
      x_2 \rar \& x_3\rar \& b \& \& \\
      x_1 \uar \ar[ur, no head, dashed, bend left=20]{} \&  \& y_3\uar \& \\
      a \uar\rar \& y_1 \rar \ar[ur, no head, dashed, bend right=20]{} \& y_2 \uar \rar \& y_4\rar \& y_7.
    \end{tikzcd}
  \]
  where the arguments from the 9 vertex case apply.
  Otherwise the new arrow points in the opposite direction of the existing one such as:
  \[
    \begin{tikzcd}[ampersand replacement=\&]
      x_2 \rar \& x_3\rar \& b \& \\
      x_1 \uar \ar[ur, no head, dashed, bend left=20]{} \&  \& y_3\uar \& \\
      a \uar\rar \& y_1 \rar \ar[ur, no head, dashed, bend right=20]{} \& y_2 \uar
      \\
      \& \& y_5\rar \uar \& y_8.
    \end{tikzcd}
  \]
  This case is a bit more involved.
  However a careful computation on the subpart induced by vertices $\{y_1,y_2,y_3,y_5,y_8\}$ leads to a decomposition similar to the one of Lemma~\ref{lem:cross}, enabling the same arguments to follow as in the other cases.

  In all cases considered, a persistence module with support of the prescribed shape and number of vertices cannot be indecomposable. This shows the claimed result.
  \end{proof}


\section{A representation to contain them all}\label{sec:lotr}

In Section~\ref{sec:results}, we showed that for every $1$D persistence module in $\rep\Afn$, we can build an indecomposable $2$D persistence module containing it as a line restriction.
We can go further and build an indecomposable $2$D persistence module with infinite support that contains all $1$D persistence modules with finite support as line restrictions.

Recall that the primal and dual construction for Theorem~\ref{thm:main} starting from a $1$D persistence module $V$ results in a $2$D candy module (see Remark~\ref{rem:shape}) of the form
\[
  \newcommand{\drawprofile}[8]{
    \def\hh{0.5}
    \foreach \b/\d [count=\level from 0] in {0/#1, 0/#2, #3/#4}{
      \pgfmathsetmacro{\z}{\hh * -\level}
      \pgfmathsetmacro{\zz}{\hh * -(\level+1)}
      \draw[fill=gray!50] (\b,\z) rectangle (\d,\zz);
    }
    \foreach \b/\d [count=\level from 4] in {#3/#4, #6/#5, #7/#5} {
      \pgfmathsetmacro{\z}{\hh * -\level}
      \pgfmathsetmacro{\zz}{\hh * -(\level-1)}
      \draw[fill=gray!50] (\b,\z) rectangle (\d,\zz);
    }
    \foreach \b/\d [count=\level from 0] in {0/#1, 0/#2, 0/#4, #3/#4, #3/#5, #6/#5, #7/#5} {
      \pgfmathsetmacro{\z}{\hh * -\level}
      \draw (\b,\z)node[dotstyle]{} --node{{\ifthenelse{\level=3}{#8}{}}} (\d,\z)node[dotstyle]{};
    }
    \node[anchor=east] at (0,0) {$(m,b_m)$};
    \node[anchor=west] at (#5,-3) {$(l,d_l)$};
  }
  \begin{tikzpicture}[scale=0.5]
    \tikzset{dotstyle/.style={circle,fill=black,inner sep=0pt,minimum size=3pt}}
    \drawprofile{2}{3}{1.5}{4}{6}{3}{4}{V}
  \end{tikzpicture}.
\]

Given two candy modules $A$ and $B$, we define the following concatenation operation.
Let $r_A$ be the rightmost vertex in the lowest interval of $A$.
We translate $B$ such that the leftmost vertex of the top interval of $B$ (denoted $l_B$) is located below and to the right of $r_A$:
\[
  \newcommand{\drawprofile}[8]{
    \def\hh{0.5}
    \foreach \b/\d [count=\level from 0] in {0/#1, 0/#2, #3/#4}{
      \pgfmathsetmacro{\z}{\hh * -\level}
      \pgfmathsetmacro{\zz}{\hh * -(\level+1)}
      \draw[fill=gray!50] (\b,\z) rectangle (\d,\zz);
    }
    \foreach \b/\d [count=\level from 4] in {#3/#4, #6/#5, #7/#5} {
      \pgfmathsetmacro{\z}{\hh * -\level}
      \pgfmathsetmacro{\zz}{\hh * -(\level-1)}
      \draw[fill=gray!50] (\b,\z) rectangle (\d,\zz);
    }
    \foreach \b/\d [count=\level from 0] in {0/#1, 0/#2, 0/#4, #3/#4, #3/#5, #6/#5, #7/#5} {
      \pgfmathsetmacro{\z}{\hh * -\level}
      \draw (\b,\z)node[dotstyle]{} --node{{\ifthenelse{\level=3}{#8}{}}} (\d,\z)node[dotstyle]{};
    }
    \pgfmathsetmacro{\z}{\hh * -6}
    \pgfmathsetmacro{\zz}{\hh * -7}
  }
  \begin{tikzpicture}[scale=0.5]
    \tikzset{dotstyle/.style={circle,fill=black,inner sep=0pt,minimum size=3pt}}
    \begin{scope}[shift={(0,0)},scale=1]
      \drawprofile{2}{3}{1.5}{4}{6}{3}{4}{$A$}
    \end{scope}
    \draw[-stealth,red, thick] (6, -4) -- (6,-3)node[anchor=west,yshift=2pt]{$r_A$};
    \draw[-stealth,red, thick] (6, -4)node[dotstyle,fill=red]{}-- (7,-4) node[anchor=south,xshift=5pt]{$l_B$};
    \node [red,anchor=north east] at(6,-4){$x$};
    \begin{scope}[shift={(7,-4)},scale=1]
      \drawprofile{2}{3}{1.5}{4}{6}{3}{4}{$B$}
    \end{scope}
  \end{tikzpicture}.
\]
Note that the vector spaces at both $r_A$ and $l_B$ are $K$.
On the vertex $x$ below $r_A$ (and to the left of $l_B$), we associate the vector space $K$, and put identity maps on the arrows to $r_A$ and $l_B$. The concatenation $A\circ B$ is the $2$D persistence module defined as being equal to $A$ and $B$ restricted to their respective supports, $K$ over $x$, and with  identity maps over the arrows from $x$ to $l_B$ and $x$ to $r_A$, and zero elsewhere.

\begin{lemma}\label{lem:concatenation}
    Given $A$ and $B$ two candy modules, $A\circ B$ is a candy module.
\end{lemma}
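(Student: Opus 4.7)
The plan is to verify the three defining properties of a candy module from Remark~\ref{rem:shape}: the support is contained in a rectangle, the lower-right and upper-left corners of this rectangle support $K$, and the endomorphism ring is $K$. For the first two properties I will take as bounding rectangle the one whose upper-left corner is the upper-left handle of $A$ and whose lower-right corner is the lower-right handle of $B$. Because $B$ has been translated strictly southeast of the bottom-right handle $r_A$ of $A$, both $\supp(A)$ and $\supp(B)$ lie inside this rectangle, as does $x$; the two extreme corners of the rectangle then carry $K$ by the candy property of $A$ and $B$ respectively.

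For the endomorphism computation I first need to check that $A \circ B$ is a well-defined $2$D persistence module. The only potentially new commutativity relation is the one in the grid square whose four corners are $x$, $r_A$, $l_B$, and the vertex immediately northeast of $x$; this fourth vertex lies outside both $\supp(A)$ and $\supp(B)$ by the placement of $B$, so both compositions out of $x$ land in a zero vector space and the relation holds automatically.

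Given $\phi \in \End(A \circ B)$, the components $\phi_v$ for $v \in \supp(A)$ assemble into an endomorphism $\phi^A \in \End(A)$: the grid arrows connecting $\supp(A)$ to its complement in $A \circ B$ always have a zero endpoint in $A$, so every commutativity requirement for $\phi^A$ reduces to one already imposed by $\phi$. Since $A$ is a candy module, $\phi^A = c_A \cdot \id_A$ for some $c_A \in K$; symmetrically $\phi^B = c_B \cdot \id_B$, and $\phi_x$ is multiplication by some $c_x \in K$. Applying commutativity to the two identity structure maps $x \to r_A$ and $x \to l_B$ yields $c_A = c_x$ and $c_x = c_B$, so $\phi$ is determined by a single scalar and $\End(A \circ B) \cong K$.

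The main obstacle is the intermediate observation that restriction really does produce an endomorphism of $A$ (and of $B$), rather than some larger set of morphisms constrained by extra relations coming from the new vertex $x$. This hinges on the geometric fact that the translation places the entire rectangle of $B$ strictly southeast of $r_A$, so no vertex of $\supp(A)$ is comparable to any vertex of $\supp(B)$, and inside $\supp(A) \cup \supp(B)$ the vertex $x$ is comparable only to $r_A$ and only to $l_B$, via the two added identity arrows. Once this geometric input is verified, the propagation of a single scalar across $A$, $x$, and $B$ is immediate.
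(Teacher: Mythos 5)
Your argument is correct and follows essentially the same route as the paper's proof: verify that no commutativity relation is violated, note that the two extreme corners carry $K$, restrict an endomorphism of $A\circ B$ to $A$, to $x$, and to $B$, and propagate the single scalar through the two identity arrows out of $x$. The only blemish is the side claim that within $\supp(A)\cup\supp(B)$ the vertex $x$ is comparable only to $r_A$ and $l_B$ --- in fact $x$ sits below the entire right column of $A$'s bounding rectangle and to the left of the entire top row of $B$'s, so it is comparable to every support vertex in those two segments (the paper notes exactly these extra paths); this inaccuracy is harmless, since your restriction step only needs that naturality squares with a zero endpoint in $A$ (or $B$) are vacuous, and your well-definedness step only needs the single mixed unit square, whose northeast corner is indeed zero.
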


\begin{proof}
  It is easy to see that $A\circ B$ is a well-defined $2$D persistence module, as no commutativity relations are violated by our construction.
  Indeed the supports of $A$ and $B$ are disjoint, and there are no paths from one to the other.
  Moreover, the addition of the vector space over $x$ and the two identity maps only create paths from $x$ to the upper row of $B$ and the right column of $A$.

  Next, we check that $A\circ B$ is indeed a candy module.
  Its top left vertex corresponds to the top left vertex of $A$ and is associated to the vector space $K$, while its bottom right vertex corresponds to the bottom right vertex of $B$ and also supports $K$.

  By the property of candy modules, $\End(A)\cong K\cong \End(B)$.
  Let $f$ be an endomorphism of $A\circ B$.
  The restriction of $f$ to the support of $A$, respectively $x$ and $B$, is defined by a unique scalar $\alpha$, respectively $\xi$, $\beta$.
  Therefore there exist two non-zero constants $c_A$ and $c_B$ such that $f$ restricted at vertex $r_A$, respectively $x$ and $l_B$ is the multiplication by the scalar $c_A\alpha$, respectively $\xi$ and $c_B\beta$.
  The commutativity conditions along the arrows from $x$ to $r_A$ and  from $x$ to $l_B$ implies that $c_A\alpha=\xi=c_B\beta$.
  Note that no other conditions are required for $f$.
  Therefore $f$ is completely determined by one scalar $\xi$, and the endomorphism ring of $A\circ B$ is isomorphic to $K$.
\end{proof}

\begin{lemma}
  \label{lem:countability}
  The set $\mathcal{P}$ of isomorphism classes of $1$D persistence modules of finite total dimension is a countable set.
\end{lemma}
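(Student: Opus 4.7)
The plan is to use the structure theorem for $1$D persistence modules to parametrize isomorphism classes by finite multisets over a countable set, and then invoke the standard fact that the set of finite multisets over a countable set is countable.

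First, I would recall that any $1$D persistence module $V$ of finite total dimension can, after translation, be viewed as a representation of $\Af{n}$ for some $n$, hence by Gabriel's theorem together with Krull-Schmidt (as already recorded in Section~\ref{sec:background}) decomposes uniquely up to isomorphism as
\[
  V \cong \bigoplus_{i=1}^{N} \intv[b_i,d_i]^{m_i}
\]
with pairs $(b_i,d_i) \in \mathbb{Z}^2$ satisfying $b_i \leq d_i$, all pairs distinct, and $m_i \in \mathbb{Z}_{>0}$. Thus the isomorphism class of $V$ is completely determined by the finite multiset $\{(b_i,d_i)\text{ with multiplicity }m_i\}_{i=1}^N$ of elements of the countable set $S \defeq \{(b,d) \in \mathbb{Z}^2 \suchthat b \leq d\}$.

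Next, I would produce an injection $\mathcal{P} \hookrightarrow \bigsqcup_{k \geq 0} S^k$. Given a finite multiset, list its elements with multiplicity in any fixed total order on $S$ (for instance the lexicographic order inherited from $\mathbb{Z}^2$). This yields a finite tuple in $S^k$ where $k = \sum_i m_i$ is the total dimension, and the tuple uniquely determines and is determined by the multiset. The countable union $\bigsqcup_{k \geq 0} S^k$ is a countable union of countable sets (each $S^k$ is countable since $S$ is), hence countable.

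The only step that requires any care is making sure we are genuinely indexing by \emph{isomorphism classes} and not by representations themselves; this is handled by the uniqueness part of Krull-Schmidt. No obstacle is expected: this is essentially a bookkeeping argument once the classification by interval multisets is in hand.
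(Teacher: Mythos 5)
Your proof is correct and takes essentially the same route as the paper: both arguments reduce the problem to the unique interval decomposition (with a lexicographic ordering fixing a canonical representative) and then do standard countability bookkeeping. The only cosmetic difference is the final counting step — you inject $\mathcal{P}$ into the countable union $\bigsqcup_{k\geq 0} S^k$ of finite tuples, whereas the paper exhausts the set of representatives by the finite sets $S_n$ of modules with total dimension less than $n$ and support in $[-n,n]$; both are equally valid.
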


\begin{proof}
  Note that a representative $V$ for a class $[V]\in \mathcal{P}$ can be decomposed as
  \[
    V \cong \bigoplus_{i=1}^m \intv[b_i,d_i]
  \]
  in a unique canonical way.
  Indeed the intervals in an indecomposable decomposition can be ordered using the lexicographic order on $\mathbb{Z}^2$.
  We thus have a unique representative for each class $[V]$ and denote by $S$ the set of all those representatives.

  We define $S_n$ to be the set of all elements of $S$ whose total dimension is less than $n$ and whose support is contained in $[-n,n]$.
  Every set $S_n$ is finite and $\cup_n S_n = S$.
  Therefore the set $S$ is countable and is in bijection with the set of all isomorphism classes of $1$D persistence modules of finite total dimension.

  Note that the sequence $(S_n)_n$ is in fact a filtration of $S$ as $S_n\subset S_{n+1}$.
  %
\end{proof}

\begin{theorem}
  There exists an indecomposable $2$D persistence module $M \in \rep\Gf{}$ such that any $1$D persistence module $V$ of finite total dimension is a line restriction of $M$.
\end{theorem}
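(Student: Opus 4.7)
The plan is to use the countability from Lemma~\ref{lem:countability} together with the concatenation operation from Lemma~\ref{lem:concatenation} to assemble a single module out of candy modules for each isomorphism class. By Lemma~\ref{lem:countability}, I enumerate a set of representatives of the isomorphism classes of 1D persistence modules with finite total dimension as a sequence $V_1, V_2, V_3, \ldots$. For each $V_i$, I apply the primal-and-dual construction described in Remark~\ref{rem:shape} to obtain a candy module $C_i$ such that $V_i$ is a line restriction of $C_i$ along the horizontal line passing through the row corresponding to $S(3) = V_i$ in Eq.~\eqref{eq:master_construction}.

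I then define $M \in \rep\Gf{}$ as the infinite concatenation
\[
  M \defeq C_1 \circ C_2 \circ C_3 \circ \cdots,
\]
understood iteratively: having placed $C_1, \ldots, C_i$ (with glue vertices between consecutive pairs), I translate $C_{i+1}$ so that its top-left corner lies strictly below and to the right of the bottom-right corner of $C_i$, and insert the glue vertex $x_i$ (with identity maps to $r_{C_i}$ and $l_{C_{i+1}}$) exactly as in the definition preceding Lemma~\ref{lem:concatenation}. Because each $C_{i+1}$ is placed strictly to the lower-right of $C_i$, the supports of distinct candy modules are disjoint, no two of the glue paths create a new commutative square, and $M$ is a well-defined (infinite-support) $2$D persistence module.

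For each index $i$, taking the horizontal line $L_i \colon \mathbb{Z} \to \mathbb{Z}\times\mathbb{Z}$ at the height of $V_i$ inside $C_i$ gives $R_{L_i}(M) \cong V_i$. Indeed, the line meets the support of $C_i$ exactly along the row supporting $V_i$; all previous candy modules $C_1,\ldots,C_{i-1}$ sit in strictly higher rows so contribute zero, all later candy modules $C_{i+1}, C_{i+2}, \ldots$ sit in strictly lower rows so contribute zero, and the glue vertices $x_j$ also lie strictly below the top row of each $C_{j+1}$, hence below the row of $V_i$ in $C_i$. Thus every $V_i$, and hence every isomorphism class in $\mathcal{P}$, appears as a line restriction of $M$.

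The final and main step is indecomposability. Let $f \in \End(M)$. Because the internal structure of $M$ restricted to the support of each $C_i$ is exactly that of $C_i$, and $\End(C_i) \cong K$ since $C_i$ is a candy module, the restriction of $f$ to each $C_i$ is multiplication by some scalar $\alpha_i \in K$. At each glue vertex $x_i$, where $M(x_i) = K$, the map $f_{x_i}$ is multiplication by some scalar $\xi_i \in K$. The commutativity of $f$ along the identity arrow $x_i \to r_{C_i}$ forces $\xi_i = \alpha_i$, and along $x_i \to l_{C_{i+1}}$ forces $\xi_i = \alpha_{i+1}$. Chaining these equalities along the (connected) sequence of candy modules gives $\alpha_i = \alpha_j$ for all $i,j$, so $f$ is a single scalar multiple of the identity. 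Hence $\End(M) \cong K$ is local, and $M$ is indecomposable. The only delicate point is verifying that the scalars really are forced to agree across the whole infinite chain; but since consecutive candy modules are linked by a glue vertex whose vector space is $K$ with identity maps to both neighbors, the argument is exactly the one used in the proof of Lemma~\ref{lem:concatenation}, applied inductively along $\mathbb{N}$, so no analytic or limiting difficulty arises.
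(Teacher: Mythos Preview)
Your proof is correct and follows essentially the same approach as the paper: enumerate via Lemma~\ref{lem:countability}, build a candy module for each class using Remark~\ref{rem:shape}, concatenate, and deduce $\End(M)\cong K$ by chaining scalars through the glue vertices. One small point: you should enumerate only the \emph{nonzero} modules (the candy construction requires at least one interval summand) and recover $V=0$ as the restriction along a line missing the support, as the paper does; also note that $x_j$ actually sits \emph{at} the top row of $C_{j+1}$ rather than strictly below it, though this does not affect your conclusion since that row is never the $V_i$-row.
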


\begin{proof}
    As a consequence of Lemma~\ref{lem:countability} the set of all non-zero $1$D persistence modules up to isomorphism and translation is a countable set. We can enumerate its elements as a sequence $(L_i)_{i\in \mathbb{N}}$.
    For each of these $L_i$, we apply our dual and primal construction to obtain a candy module $M_i$, as in Remark~\ref{rem:shape}.
    Note that every $M_i$ has exactly seven rows and that its endomorphism ring is $K$, see the proof of Theorem~\ref{thm:main} for details.

    We concatenate $M_0,M_1,\hdots$ to obtain a $2$D persistence module $M$ as below:
    \[
  \newcommand{\drawprofile}[8]{
    \def\hh{0.5}
    \foreach \b/\d [count=\level from 0] in {0/#1, 0/#2, #3/#4}{
      \pgfmathsetmacro{\z}{\hh * -\level}
      \pgfmathsetmacro{\zz}{\hh * -(\level+1)}
      \draw[fill=gray!50] (\b,\z) rectangle (\d,\zz);
    }
    \foreach \b/\d [count=\level from 4] in {#3/#4, #6/#5, #7/#5} {
      \pgfmathsetmacro{\z}{\hh * -\level}
      \pgfmathsetmacro{\zz}{\hh * -(\level-1)}
      \draw[fill=gray!50] (\b,\z) rectangle (\d,\zz);
    }
    \foreach \b/\d [count=\level from 0] in {0/#1, 0/#2, 0/#4, #3/#4, #3/#5, #6/#5, #7/#5} {
      \pgfmathsetmacro{\z}{\hh * -\level}
      \draw (\b,\z)node[dotstyle]{} --node{{\ifthenelse{\level=3}{#8}{}}} (\d,\z)node[dotstyle]{};
    }
    \pgfmathsetmacro{\z}{\hh * -6}
    \pgfmathsetmacro{\zz}{\hh * -7}
    \pgfmathsetmacro{\xx}{#5 + \hh}
    \draw[-stealth,red, thick] (#5, \zz) -- (#5, \z);
    \draw[-stealth,red, thick] (#5, \zz) -- (\xx, \zz);
  }
  \begin{tikzpicture}[scale=0.5]
    \tikzset{dotstyle/.style={circle,fill=black,inner sep=0pt,minimum size=3pt}}
    \begin{scope}[shift={(0,0)},scale=1]
      \drawprofile{2}{3}{1.5}{4}{6}{3}{4}{$M_0$}
    \end{scope}
    \begin{scope}[shift={(6.5,-3.5)},scale=1]
      \drawprofile{2}{3}{1.5}{4}{6}{3}{4}{$M_1$}
    \end{scope}
    \begin{scope}[shift={(13,-7)},scale=1]
      \drawprofile{2}{3}{1.5}{4}{6}{3}{4}{$M_2$}
    \end{scope}
    \begin{scope}[shift={(19.5,-10.5)},scale=1]
      \node at (1,0) {$\ddots$};
    \end{scope}
  \end{tikzpicture}.
\]
    The proof of Lemma~\ref{lem:concatenation} naturally extends to the setting of a countable concatenation, and therefore $M$ is a well-defined persistence module with $\End(M)\cong K$.
    Thus $M$ is an indecomposable $2$D persistence module.

    Note that the line just above the module $M_0$ is $0$ so $M$ also contains the zero $1$D persistence module as a line restriction.
    By construction all non-zero $1$D persistence modules with finite total dimension appear as a line restriction to the middle row of the corresponding candy module $M_i$.
    Every vertex supports a finite dimensional vector space so $M$ is locally finite. Obviously $M$ has infinite support and vector spaces with arbitrarily high dimension.
\end{proof}


\section{Rectangle-decomposable $n$D persistence modules}
\label{sec:rectangles}

Denote elements of $\mathbb{Z}^n$ as $n$-tuples $\vv{b} = (b^1,b^2,\hdots,b^n)\in\mathbb{Z}^n$.
We consider the poset $(\mathbb{Z}^n,\leq)$, where $\vv{b} \leq \vv{d}$ if and only if $b^k \leq d^k$ for all $k=1,\hdots,n$.

Recall that by $n$D persistence module we mean a functor $M$ in $\rep \mathbb{Z}^n = \left[\mathbb{Z}^n, \vect_K\right]$. Throughout this section, persistence module will mean persistence module of finite support: $\{M(\vv{x}) \neq 0 \suchthat \vv{x} \in \mathbb{Z}^n\}$ is a finite set.

\begin{definition}
  Let $\vv{b} \leq \vv{d} \in \mathbb{Z}^n$. The \emph{finite rectangle} from $\vv{b}$ to $\vv{d}$, denoted $\intv[\vv{b},\vv{d}]$, is the $n$D persistence module defined by
  \[
    \begin{array}{rcl}
    \intv[\vv{b},\vv{d}](\vv{x}) &=&
    \left\{
      \begin{array}{ll}
        K & \text{ if } \vv{b} \leq \vv{x} \leq \vv{d}, \\
        0 & \text{ otherwise, and}
      \end{array}
    \right. \\~\\
    \intv[\vv{b},\vv{d}](\vv{x} \leq \vv{y}) &=&
    \left\{
      \begin{array}{ll}
        1_K & \text{ if } \vv{b} \leq \vv{x} \leq \vv{y} \leq \vv{d}, \\
        0 & \text{ otherwise}.
      \end{array}
            \right.
    \end{array}
  \]
\end{definition}
The support of $\intv[\vv{b},\vv{d}]$ is exactly the points on the $n$D grid  contained in the rectangle
\[
  [b^1,d^1] \times [b^2,d^2] \times \hdots \times [b^n,d^n].
\]
We denote this set of points on the $n$D grid by $[\vv{b},\vv{d}]$.

A more general treatment would involve possibly unbounded rectangles (See \cite{bjerkevik2016stability} for example). However, we only treat $n$D persistence modules of finite support, and this finiteness is essential to our proof of Theorem~\ref{thm:rectangles}.

\begin{definition}
  An $n$D persistence module $V$ is said to be finite-rectangle-decomposable if and only if
  \[
    V \cong \bigoplus_{i=1}^m\intv[\vv{b_i}, \vv{d_i}].
  \]
  for some elements $\vv{b_i} \leq \vv{d_i} \in \mathbb{Z}^n$ for $i=1,\hdots m$.
\end{definition}
Note that we assume finite supports, and only discuss finite sums. The finite rectangles $\intv[\vv{b_i}, \vv{d_i}]$ do not need to be distinct.

The following lemma generalizes Lemma~\ref{lem:homdim} to finite rectangles.
\begin{lemma}
  \label{lem:homdim_rectangles}
  Let $\intv[\vv{a},\vv{b}],\intv[\vv{c},\vv{d}]$ be finite rectangular $n$D persistence modules.
  \begin{enumerate}
  \item The dimension of $\Hom(\intv[\vv{a}, \vv{b}], \intv[\vv{c}, \vv{d}])$ as a $K$-vector space is either $0$ or $1$.
  \item There exists a canonical basis $\left\{f\itoi{\vv{a},\vv{b},\vv{c},\vv{d}}\right\}$ for each nonzero $\Hom(\intv[\vv{a}, \vv{b}], \intv[\vv{c}, \vv{d}])$ such that
    \[
      (f\itoi{\vv{a},\vv{b},\vv{c},\vv{d}})_{\vv{x}} = \left\{
        \begin{array}{ll}
          1_K: K \rightarrow K, & \text{if } \vv{x} \in [\vv{a},\vv{b}] \cap [\vv{c},\vv{d}] \\
          0, & \text{otherwise.}
        \end{array}
      \right.
    \]
    \item  In fact,
    \[
      \Hom(\intv[\vv{a}, \vv{b}], \intv[\vv{c}, \vv{d}])=
      \begin{cases}
        K f\itoi{\vv{a},\vv{b},\vv{c},\vv{d}}, & \vv{c} \leq \vv{a} \leq \vv{d} \leq \vv{b}, \\
        0, & \text{otherwise}.
      \end{cases}
    \]
  \end{enumerate}
\end{lemma}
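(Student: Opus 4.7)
The plan is to mimic the proof of Lemma~\ref{lem:homdim} componentwise, replacing intervals by rectangles but using only the same pointwise dimension argument. A morphism $\eta : \intv[\vv{a},\vv{b}] \to \intv[\vv{c},\vv{d}]$ is a natural transformation whose component $\eta_{\vv{x}}$ at each $\vv{x} \in \mathbb{Z}^n$ is a linear map between spaces of dimension at most $1$. Hence $\eta_{\vv{x}}$ is forced to be $0$ whenever source or target is $0$, and otherwise equals multiplication by some scalar $c_{\vv{x}} \in K$. The non-trivial information of $\eta$ therefore lives on the intersection $[\vv{a},\vv{b}]\cap[\vv{c},\vv{d}]$, which (when nonempty) equals the rectangle $[\vv{a}\vee\vv{c},\vv{b}\wedge\vv{d}]$ under componentwise join and meet.

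First I would handle the ``otherwise'' cases of part \textit{3}. If $\vv{c} \not\leq \vv{a}$, then $\vv{a} \notin [\vv{c},\vv{d}]$, so $\intv[\vv{c},\vv{d}](\vv{a}) = 0$; for every $\vv{x}$ in the intersection one has $\vv{a}\leq\vv{x}$, and the naturality square at $\vv{a}\leq\vv{x}$ reads $\eta_{\vv{x}}\circ 1_K = \intv[\vv{c},\vv{d}](\vv{a}\leq\vv{x})\circ \eta_{\vv{a}} = 0$, forcing $c_{\vv{x}} = 0$. Dually, if $\vv{d}\not\leq\vv{b}$ then $\vv{d}\notin[\vv{a},\vv{b}]$, and naturality at $\vv{x}\leq\vv{d}$ forces $c_{\vv{x}} = 0$ because the source internal map $\intv[\vv{a},\vv{b}](\vv{x}\leq\vv{d})$ vanishes. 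If $\vv{a}\not\leq\vv{d}$, the intersection is empty. In every sub-case we conclude $\eta = 0$, so $\Hom = 0$.

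Next, assuming $\vv{c}\leq\vv{a}\leq\vv{d}\leq\vv{b}$, the intersection collapses to the rectangle $[\vv{a},\vv{d}]$. For any two points $\vv{x},\vv{y} \in [\vv{a},\vv{d}]$, the componentwise maximum $\vv{x}\vee\vv{y}$ is again in $[\vv{a},\vv{d}]$; since both source and target internal maps between comparable points of the intersection are $1_K$, naturality applied to $\vv{x}\leq\vv{x}\vee\vv{y}$ and $\vv{y}\leq\vv{x}\vee\vv{y}$ gives $c_{\vv{x}} = c_{\vv{x}\vee\vv{y}} = c_{\vv{y}}$. Thus $\eta$ is determined by a single scalar $\lambda \in K$, establishing $\dim\Hom \leq 1$.

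Finally, I would verify that the candidate $f\itoi{\vv{a},\vv{b},\vv{c},\vv{d}}$ defined by $1_K$ on $[\vv{a},\vv{d}]$ and $0$ elsewhere is indeed a natural transformation, yielding parts \textit{1}, \textit{2}, and the nonzero case of \textit{3}. The only naturality squares that are not obviously commutative are those for $\vv{x}\leq\vv{y}$ with exactly one endpoint in the intersection; the condition $\vv{c}\leq\vv{a}\leq\vv{d}\leq\vv{b}$ is precisely what ensures that in each such configuration either the source or the target internal map is zero on the relevant side, so both compositions vanish. The main (mild) obstacle, compared to the 1D lemma, is keeping track of the componentwise partial order to show that the intersection remains a rectangle and is closed under joins; once that is fixed, every step reduces to a pointwise calculation identical to the one-dimensional case.
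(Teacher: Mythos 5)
Your proof is correct, but it takes a different route from the paper: the paper does not prove Lemma~\ref{lem:homdim_rectangles} directly at all, it simply cites Lemma~3.3 and Corollary~3.4 of \cite{bjerkevik2016stability}, whereas you give a self-contained argument that generalizes the $1$D statement (Lemma~\ref{lem:homdim}) componentwise. Your key steps all hold up: forcing $\eta_{\vv{x}}=0$ off the intersection is immediate from the dimensions; the three failure cases of $\vv{c}\leq\vv{a}\leq\vv{d}\leq\vv{b}$ are each killed by a correctly chosen naturality square (at $\vv{a}\leq\vv{x}$ when $\vv{c}\not\leq\vv{a}$, at $\vv{x}\leq\vv{d}$ when $\vv{d}\not\leq\vv{b}$, and emptiness of the intersection when $\vv{a}\not\leq\vv{d}$); the join trick $c_{\vv{x}}=c_{\vv{x}\vee\vv{y}}=c_{\vv{y}}$ uses that the intersection rectangle $[\vv{a}\vee\vv{c},\vv{b}\wedge\vv{d}]$ is closed under componentwise joins, which replaces the trivial connectivity of a $1$D interval and gives $\dim\Hom\leq 1$; and your check that the candidate map with components $1_K$ on $[\vv{a},\vv{d}]$ is natural is exactly right, since under $\vv{c}\leq\vv{a}\leq\vv{d}\leq\vv{b}$ a point above the intersection leaves $[\vv{c},\vv{d}]$ and a point below it leaves $[\vv{a},\vv{b}]$, so in each mixed square one internal map vanishes. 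What your approach buys is a proof that keeps the paper self-contained and makes transparent that the $n$D case is no harder than the $1$D one; what the citation buys is brevity and a pointer to a more general statement (Bjerkevik handles possibly unbounded rectangles), which is why the authors chose it.
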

\begin{proof}
  Lemma~3.3 and Corollary~3.4 of \cite{bjerkevik2016stability}.
\end{proof}

Thus, for morphisms between finite-rectangle-decomposable $n$D persistence modules, we have a similar matrix formalism as in the $1$D case. As before, we adopt the convention of hiding the morphisms  $f\itoi{\vv{a},\vv{b},\vv{c},\vv{d}}$ for simplicity.


\begin{definition}[Vertical]
  A collection of finite rectangles $\mathcal{I} = \{\intv[\vv{b_i},\vv{d_i}]\}_{i=1}^m$ is said to be \emph{vertical} if and only if all the pairs $(\vv{b_i},\vv{d_i})$ are distinct and there exists a constant $\vv{\mu}$ such that $\vv{\mu} = \frac{\vv{b_i}+\vv{d_i}}{2}$ for all $i$.
\end{definition}
In the definition above, the arithmetic operations are performed component-wise. The following is the generalized version of Lemma~\ref{lem:vertical}.

\begin{lemma}
  Let $W = \bigoplus_{i=1}^m \intv[\vv{b_i},\vv{d_i}]$ be finite-rectangle-decomposable $n$D persistence module whose set of rectangles is vertical. Then the matrix form of any morphism $\phi: W\rightarrow W$ is diagonal:
  \[
    \phi =
    \left[
      \begin{array}{ccc}
        c_1 & & \\
            & \ddots & \\
            & & c_m
      \end{array}
    \right]
  \]
  for some scalars $c_1,c_2,\hdots, c_m \in K$.
\end{lemma}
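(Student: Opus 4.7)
The plan is to mirror the proof of Lemma~\ref{lem:vertical} essentially verbatim, with scalar inequalities replaced by componentwise vector inequalities. Since the collection $\{\intv[\vv{b_i},\vv{d_i}]\}_{i=1}^m$ is vertical with common midpoint $\vv{\mu}$, I would first write $\vv{b_i} = \vv{\mu} - \vv{h_i}$ and $\vv{d_i} = \vv{\mu} + \vv{h_i}$ where $\vv{h_i} \defeq \frac{\vv{d_i}-\vv{b_i}}{2}$. Because the pairs $(\vv{b_i},\vv{d_i})$ are distinct by hypothesis, the vectors $\vv{h_i}$ are pairwise distinct.

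Next, by the matrix formalism extended via Lemma~\ref{lem:homdim_rectangles}, any endomorphism $\phi:W\rightarrow W$ is represented by a matrix $\Phi = [\Phi_{j,i}]$ with entries
\[
  \Phi_{j,i} \in \Hom(\intv[\vv{b_i},\vv{d_i}],\intv[\vv{b_j},\vv{d_j}]).
\]
By part \textit{3} of Lemma~\ref{lem:homdim_rectangles}, this Hom space is nonzero if and only if $\vv{b_j} \leq \vv{b_i} \leq \vv{d_j} \leq \vv{d_i}$. I would then verify that for $i \neq j$ this inequality cannot hold: substituting the parametrization gives
\[
  \vv{\mu}-\vv{h_j} \leq \vv{\mu}-\vv{h_i} \leq \vv{\mu}+\vv{h_j} \leq \vv{\mu}+\vv{h_i},
\]
which forces both $\vv{h_i} \leq \vv{h_j}$ and $\vv{h_j} \leq \vv{h_i}$ componentwise, hence $\vv{h_i} = \vv{h_j}$, contradicting distinctness. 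Therefore $\Phi_{j,i} = 0$ whenever $i \neq j$.

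Finally, for the diagonal entries, the condition $\vv{b_i} \leq \vv{b_i} \leq \vv{d_i} \leq \vv{d_i}$ is trivially satisfied, so $\Hom(\intv[\vv{b_i},\vv{d_i}],\intv[\vv{b_i},\vv{d_i}])$ is one-dimensional, spanned by $f\itoi{\vv{b_i},\vv{d_i},\vv{b_i},\vv{d_i}}$, which is the identity morphism on $\intv[\vv{b_i},\vv{d_i}]$ by part \textit{2} of Lemma~\ref{lem:homdim_rectangles}. Thus $\Phi_{i,i} = c_i$ for some $c_i \in K$ (under our convention of hiding $f\itoi{\vv{b_i},\vv{d_i},\vv{b_i},\vv{d_i}}$), giving the claimed diagonal form.

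There is no real obstacle here; the argument is essentially $1$D, because the verticality condition forces the relevant inequalities to be symmetric under exchanging $i$ and $j$. The only thing to double-check is that the arithmetic $\frac{\vv{b_i}+\vv{d_i}}{2}$ yields a well-defined object in the proof (it could have half-integer components), but since we only ever use $\vv{h_i} = \vv{\mu}-\vv{b_i} = \vv{d_i}-\vv{\mu}$ as a difference of integer vectors in the inequality manipulations, this causes no issue.
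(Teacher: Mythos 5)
Your proof is correct and is essentially the argument the paper intends: the paper simply says the proof is ``similar to that of Lemma~\ref{lem:vertical}, using Lemma~\ref{lem:homdim_rectangles}'', and your componentwise transcription (parametrizing $\vv{b_i}=\vv{\mu}-\vv{h_i}$, $\vv{d_i}=\vv{\mu}+\vv{h_i}$, deducing $\vv{h_i}=\vv{h_j}$ from the Hom-nonvanishing condition, and identifying diagonal entries with scalar multiples of the identity) is exactly that argument. Your remark about possible half-integer components of $\vv{\mu}$ and $\vv{h_i}$ is a harmless aside, since the inequalities are valid over the rationals.
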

\begin{proof}
  Similar to the proof of Lemma~\ref{lem:vertical}, and using Lemma~\ref{lem:homdim_rectangles}.
\end{proof}

\begin{definition}[Hyperplane Restriction]
  \label{defn:hyperplane_restriction}
  Let $n$ be a positive integer.
  \begin{enumerate}
  \item A \emph{hyperplane} $L$ is a functor $L : (\mathbb{Z}^n,\leq) \rightarrow (\mathbb{Z}^{n+1},\leq)$ that is injective on objects.
  \item Let $V$ be an $n$D persistence module. We say that $V$ is a \emph{hyperplane restriction} of the $(n+1)$D persistence module $M$ if there is a hyperplane $L$ such that $(M\circ L) \cong V$.
  \end{enumerate}
\end{definition}

Note that in the proof of Theorem~\ref{thm:main}, we used the Auslander-Reiten quiver of $\Af{w}$ in order to visualize the movements of the intervals. However, it was not essential to the proof. Above, we have provided generalizations of the essential tools to finite rectangles. Thus, the following theorem can be proved similarly as Theorem~\ref{thm:main}, except we no longer draw the Auslander-Reiten quiver.
\begin{theorem}
  \label{thm:rectangles}
  Let $V$ be a finite-rectangle-decomposable $n$D persistence module. Then there exists an indecomposable $(n+1)$D persistence module $M$ with finite support such that $V$ is a hyperplane restriction of $M$.
\end{theorem}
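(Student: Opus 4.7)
The plan is to mimic the proof of Theorem~\ref{thm:main} verbatim, with $1$D intervals replaced by finite rectangles in $n$D, using the tools already developed: Lemma~\ref{lem:homdim_rectangles} (which plays the role of Lemma~\ref{lem:homdim}), the $n$D version of the vertical lemma just stated, and a straightforward extension of the stacking lemma from $\rep \Gf{h,w}\cong\fun{\Af{h}}{\rep\Af{w}}$ to $\rep\Gf{}^{(n+1)} \cong \fun{\Af{h}}{\rep\mathbb{Z}^n}$ (finite-support representations sit inside some finite slab). Writing $V \cong \bigoplus_{i=1}^m \intv[\vv{b_i},\vv{d_i}]$, the construction produces an object $S \in \fun{\Af{4}}{\rep \mathbb{Z}^n}$ of shape
\[
  S:\ I_V \longrightarrow \widebar{V} \longrightarrow V' \longrightarrow V,
\]
which then becomes the desired indecomposable $(n{+}1)$D persistence module after applying stacking, with $V$ recovered as the hyperplane restriction to the top slab $\vv{x}\mapsto (\vv{x},3)$.

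For the three preparatory steps I proceed coordinate-wise. For separate-and-shift, I pick $n$-tuples $\vv{d_i'}$ that are pairwise distinct with $\vv{d_i'}\geq \vv{d_i}$ and that additionally satisfy
\[
  \tfrac{\vv{b_j}+\vv{d_j'}}{2} \leq \vv{d_i'} \quad \text{for all pairs } i,j,
\]
obtained by the same trick as in Theorem~\ref{thm:main}: pick any distinct $\vv{d_i''}\geq \vv{d_i}$, let $\vv{\ell}$ be the coordinate-wise maximum of $\vv{b_i}+\vv{d_i''}$, and set $\vv{d_i'} := \vv{d_i''}+\vv{\ell}$. I then form $V' = \bigoplus \intv[\vv{b_i},\vv{d_i'}]$ together with the ``diagonal'' surjection $V' \to V$. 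Verticalization uses $\vv{\mu}$ equal to the coordinate-wise max of the midpoints $(\vv{b_i}+\vv{d_i'})/2$ and sets $\vv{b_i'} := 2\vv{\mu}-\vv{d_i'}$; the inequality above yields
\[
  \vv{b_i} \leq \vv{b_i'} \leq \vv{\mu} \leq \vv{d_j'}\quad \text{for all pairs } i,j, \tag{$\star$}
\]
which is the $n$D analogue of Ineq.~\eqref{eq:b_condition}. Distinctness of the $\vv{d_i'}$ makes $\{\intv[\vv{b_i'},\vv{d_i'}]\}_i$ vertical, giving $\widebar{V}$ and its diagonal inclusion into $V'$. Finally, coning sets $I_V := \intv[\max_i\vv{b_i'},\max_j\vv{d_j'}]$ (coordinate-wise max) and Lemma~\ref{lem:homdim_rectangles} together with ($\star$) provides nonzero morphisms $I_V \to \intv[\vv{b_i'},\vv{d_i'}]$ assembling to a ``column'' morphism $I_V \to \widebar{V}$.

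Indecomposability proceeds exactly as before. Any $\phi \in \End(S)$ has a component $\phi_1 \in \End(\widebar{V})$; verticality forces $\phi_1$ to be diagonal with scalars $c_1,\dots,c_m \in K$. Then the $n$D analogue of the propagation lemma (Lemma~\ref{lem:prop}), whose proof transcribes directly after replacing each $f\itoi{a,b,c,d}$ by $f\itoi{\vv{a},\vv{b},\vv{c},\vv{d}}$ and using Lemma~\ref{lem:homdim_rectangles}, shows that $\phi_2$ and $\phi_3$ must be diagonal with the same scalars; here ($\star$) supplies the crucial nonvanishing of the relevant compositions $f\itoi{\vv{b_i},\vv{d_i'},\vv{b_j},\vv{d_j'}}\circ f\itoi{\vv{b_i'},\vv{d_i'},\vv{b_i},\vv{d_i'}}$ etc. The coning morphism then equates $c_1=\dots=c_m=\phi_0 \in K$, so $\End(S)\cong K$ and $S$ is indecomposable; viewed through stacking, $S$ is an indecomposable $(n{+}1)$D persistence module of finite support whose hyperplane restriction along the top slab is $V$.

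The step I expect to require most care is the $n$D propagation of nonzeros: unlike in the $1$D case, where nonvanishing of compositions of canonical maps between intervals was immediate from a pair of inequalities, in the $n$D setting I need to check that the inequalities in Lemma~\ref{lem:homdim_rectangles}(3) are preserved by composition along the three stages, and this is exactly what ($\star$) is designed to guarantee. Everything else, including the obvious generalization of the stacking lemma via currying, is routine.
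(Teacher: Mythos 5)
Your proposal is correct and follows essentially the same route as the paper's own proof, which is likewise a transcription of the Theorem~\ref{thm:main} construction with intervals replaced by finite rectangles, using Lemma~\ref{lem:homdim_rectangles}, the rectangle version of the vertical lemma, a rectangle version of Lemma~\ref{lem:prop}, and stacking/currying into $\fun{\Af{4}}{\rep(\mathbb{Z}^n,\leq)}$. The only cosmetic caveat is that your explicit shift $\vv{d_i'}=\vv{d_i''}+\vv{\ell}$ needs $\vv{d_i''}\geq \vv{0}$ coordinate-wise (arranged by translating $V$ or enlarging $\vv{d_i''}$), exactly as in the nonnegative-index convention of the $1$D argument.
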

\begin{proof}
  The proof is similar to the proof of Theorem~\ref{thm:main}. Below is a quick outline.

  Without loss of generality, let $V = \bigoplus_{i=1}^m \intv[\vv{b_i},\vv{d_i}]$. We then separate-and-shift to distinct $\vv{d'_i} \in \mathbb{Z}^n$ such that $\vv{d'_i} \geq \vv{d_i}$ for all $i$ and $\frac{\vv{b_j} + \vv{d'_j}}{2} \leq \vv{d'_i}$ for all pairs $i$ and $j$. Take
  \[
    \vv{\mu} = \max_{i}\left(\frac{\vv{b_i} + \vv{d'_i}}{2}\right)
  \]
  where the $\max$ is taken component-wise. This will serve as the shared midpoint for verticalization, by defining $\vv{b_i'} = 2\vv{\mu} - \vv{d'_i}$. Again, we have
  \[
    \vv{b_i} \leq \vv{b'_i} \leq \vv{\mu} \leq \vv{d'_j}
  \]
  for all pairs $i$, $j$. Finally, coning is done with the finite rectangle
  \[
    I_V \defeq \intv\left[\max_i\left(\vv{b'_i}\right), \max_j\left(\vv{d'_j}\right)\right].
  \]

  We get the object $S\in \fun{\Af{4}}{\rep(\mathbb{Z}^n,\leq)}$, a $\rep(\mathbb{Z}^n,\leq)$-valued representation of $\Af{4}$:
  \[
    S:
    \begin{tikzcd}[ampersand replacement=\&, column sep=large]
      I_V \rar{\smat{1\\\vdots\\1}} \&
      \widebar{V} \rar{\smat{1 & & \\ & \ddots & \\ & & 1}} \&
      V' \rar{\smat{1 & & \\ & \ddots & \\ & & 1}} \&
      V
    \end{tikzcd}
  \]
  where $S(3)\defeq V$, $S(2) \defeq V'  = \bigoplus_{i=1}^m \intv[\vv{b_i},\vv{d'_i}]$ is obtained by separate-and-shift, $S(1) \defeq \widebar{V} = \bigoplus_{i=1}^m \intv[\vv{b'_i},\vv{d'_i}]$ by verticalization (of $V'$), and $S(0) \defeq I_V$ by coning. By stacking, $S$ can be viewed as a representation of $(\mathbb{Z}^{n+1},\leq)$, id est, an $(n+1)$D persistence module. Note that the $(n+1)$D persistence module obtained from $S$ clearly has finite support. The original $n$D persistence module $V$ is clearly a hyperplane restriction by a natural inclusion $\mathbb{Z}^n \hookrightarrow \mathbb{Z}^{n+1}$.

  It can be shown that $\End(S) \cong K$, proving the indecomposability of $S$. Note that a generalization of Lemma~\ref{lem:prop} (propagation of nonzeros) to finite-rectangle-decomposables is needed, but the proof is similar to that of Lemma~\ref{lem:prop}.
\end{proof}

Note that, as before, the construction in the proof of the theorem has a dual, which we denote by $
	V \rightarrow
    V'' \rightarrow
    \widebar{\widebar{V}} \rightarrow
    I'_V
$.
A direct consequence of this extension is the construction of indecomposable $(n+1)$D persistence modules whose support has an arbitrary number of $n$D holes. Here, $n$D holes in the support can be formalized as linearly independent classes of the $n$-dimensional homology of the clique cubical complex of the support.

\begin{corollary}\label{cor:hyperholes}
    For any $l\in\{0,1,2,\dots\}$, there exists an indecomposable $(n+1)$D persistence module whose support has exactly $l$ $n$D holes and no lower dimensional hole.
\end{corollary}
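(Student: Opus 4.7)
The plan is to transpose the proof of Corollary~\ref{cor:holes} to higher dimensions, replacing Theorem~\ref{thm:main} by Theorem~\ref{thm:rectangles} and the $1$D module $\bigoplus_{i=0}^l\intv[2i,2i]$ by an $n$D module whose support has the appropriate higher-dimensional homology. First I would fix a finite-rectangle-decomposable $n$D persistence module $V$ whose clique cubical complex satisfies $\tilde H_{n-1}(\supp(V))\cong K^l$ and $\tilde H_j(\supp(V))=0$ for $j<n-1$. The case $l=0$ is immediate with $V=\intv[\vv 0,\vv 0]$; the case $n=1$ is already Corollary~\ref{cor:holes}. For $l\geq 1$ and $n\geq 2$ I would place $l$ hollow $n$-cubes $C_1,\dots,C_l$ along the first axis with pairwise disjoint supports, each realized as the direct sum of its $2n$ $(n-1)$-dimensional facet rectangles so that $\supp(C_i)$ is a cubical model of $S^{n-1}$, and add $l-1$ short $1$D bridging rectangles between consecutive $C_i$'s; the resulting $\supp(V)$ is homotopy equivalent to $\bigvee_{i=1}^l S^{n-1}$ and has the required homology.

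Next I would apply Theorem~\ref{thm:rectangles} together with its dual construction to $V$, producing the seven-slice object
\[
  \begin{tikzcd}
    I_V \rar & \widebar{V} \rar & V' \rar & V \rar & V'' \rar & \widebar{\widebar{V}} \rar & I'_V,
  \end{tikzcd}
\]
which, via stacking, corresponds to an indecomposable $(n+1)$D persistence module $H$ (the endomorphism ring is $K$ by the primal-plus-dual argument already developed in the proof of Theorem~\ref{thm:rectangles}). It then remains only to control the topology of $\supp(H)$.

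The essential claim is that $\supp(H)$ is homotopy equivalent to the topological suspension $\Sigma(\supp(V))$. To establish this I would observe that by the rectangle analogue of Inequality~\eqref{eq:b_condition}, which is proved inside Theorem~\ref{thm:rectangles}, the vertex $\vv c\defeq\max_i\vv{b'_i}$ lies simultaneously in $\supp(V')$, $\supp(\widebar{V})$, and $\supp(I_V)$; consequently each of the top three slices deformation-retracts onto its copy of $\vv c$, and the vertical edges at $\vv c$ identify these copies, so the top cap is contractible. Dually the bottom cap is contractible. Since $\supp(V)\subseteq\supp(V')\cap\supp(V'')$, every vertex of the central slice $\supp(V)$ is joined by a vertical edge to both caps, so collapsing the two caps to their respective apices exhibits $\supp(H)$ as the double cone on $\supp(V)$, that is, $\Sigma(\supp(V))$. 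The suspension isomorphism then yields $\tilde H_n(\supp(H))\cong K^l$ and $\tilde H_j(\supp(H))=0$ for $0<j<n$, with connectedness following from connectedness of either cap.

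The main obstacle I anticipate is the topological step just above. In the $2$D setting of Corollary~\ref{cor:holes} the suspension interpretation was essentially read off from an explicit picture, but here the higher-dimensional cubical geometry is more delicate. In particular, $\supp(I_V)$ is not in general contained in $\supp(\widebar{V})$, so the simple ``nested staircase'' picture of the 2D case fails and one has to exhibit cap contractibility via the common vertex $\vv c$ rather than by inclusion of supports. Writing out the deformation retraction of each cap onto $\vv c$ explicitly using the inequality $\vv{b_j}\leq\vv{b'_j}\leq\vv c\leq\vv{d'_j}$ valid for all $j$ should suffice.
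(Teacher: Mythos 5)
Your proposal is correct and follows essentially the same route as the paper: realize $l$ independent $(n-1)$-dimensional homology classes in the support of a rectangle-decomposable $V$, apply the primal and dual constructions of Theorem~\ref{thm:rectangles}, and identify $\supp(H)$ with the topological suspension of $\supp(V)$. Your two refinements---bridging the hollow cubes so that $\supp(V)$ is connected (avoiding the spurious $1$D holes one would get from suspending a disconnected space) and contracting each cap via the common vertex $\max_i \vv{b'_i}$ rather than via inclusions of supports---are just more explicit versions of steps the paper treats informally (only make sure the hollow cubes have side length at least $2$ so the clique cubical complex does not fill them in).
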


\begin{proof}
The proof is a direct adaptation of the proof of Corollary~\ref{cor:holes}.
%

Clearly, it is possible to build an $n$D persistence module $V$ that is rectangle decomposable and contains exactly $l$ $(n-1)$D holes in its support and no lower dimensional hole, for example by building with rectangles the boundaries of $l$ $n$D hypercubes.

Note that the construction of Theorem~\ref{thm:rectangles} can be dualized in the same way as the one of Theorem~\ref{thm:main}. As in the proof of Corollary~\ref{cor:holes}, we obtain the $(n+1)$D persistence module $H$ obtained by combining the primal and dual construction with $V$ in the middle. The $(n+1)$D persistence module $H$ has support with exactly $l$ $n$D holes and is indecomposable.

To see this, by considering the construction as working in the up and down direction, we can interpret it as stacking $n$D slices: $I_V$, $\bar V$, and $V'$ below $V$ for the primal construction, and $V''$, $\widebar{\widebar{V}}$, and $I'_V$ above $V$ for the dual construction. Each of those new slices has contractible support, and they are connected along a contractible intersection, so that $I_V\rightarrow\bar V\rightarrow V'$ has contractible support (and similarly for the dual part).

The construction with primal and dual parts modifies the support to become a double cone with apexes $I_V$ and $I'_V$, in other words its topological suspension.
Such operation transforms each $m$D hole into an $(m+1)$D hole for every $m$, without creating other topological features. In our case this results in the requested topology.
\end{proof}

Gluing the constructions in all dimensions using an operation similar to the concatenation of Section~\ref{sec:lotr}, we obtain the following more interesting result.

\begin{theorem}
    Given a positive dimension $n$ and a sequence $(b_i)_{0<i<n}$ of non-negative integers, there exists an indecomposable $n$D persistence module whose support has $b_i$ for its Betti number for every $0<i<n$.
\end{theorem}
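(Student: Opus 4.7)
The strategy is to build one indecomposable ``candy-like'' $n$D piece $H_i$ for each dimension $i\in\{1,\ldots,n-1\}$ with $b_i>0$, each contributing exactly $b_i$ holes in dimension $i$ and no other positive-dimensional holes, and to glue them all together via an $n$D generalization of the concatenation from Section~\ref{sec:lotr}. Because the concatenation is performed across a contractible bridge, it will preserve indecomposability and will make positive-dimensional Betti numbers simply add, so the resulting module will realize the prescribed Betti sequence.

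To build $H_i$, I would apply the primal-and-dual construction of Theorem~\ref{thm:rectangles} (as used in Corollary~\ref{cor:hyperholes}) to a rectangle-decomposable $(n-1)$D module $V_i$ chosen for its homotopy type. For $i=1$, I take $V_1$ to be $b_1+1$ disjoint contractible rectangles. For $i\geq 2$, I take $V_i$ to be a rectangle-decomposable realization of the wedge $\bigvee_{b_i} S^{i-1}$, obtained by placing $b_i$ disjoint hollow axis-aligned $i$-cubes (as in the natural $n$D analog of Corollary~\ref{cor:holes}) and joining them by thin contractible chains of rectangles so that the resulting support is connected and free of spurious cycles. By the suspension-of-support argument in the proof of Corollary~\ref{cor:hyperholes}, $\supp(H_i)$ is the topological suspension of $\supp(V_i)$; the suspension isomorphism $\tilde H_k(\Sigma X)\cong \tilde H_{k-1}(X)$ then gives $H_i$ exactly $b_i$ holes in dimension $i$ and no other positive-dimensional holes. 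The endomorphism analysis in the proof of Theorem~\ref{thm:rectangles} yields $\End(H_i)\cong K$, and $H_i$ is an $n$D analog of the candy modules of Remark~\ref{rem:shape}, supporting $1$-dimensional vector spaces at the two cone apexes $I_{V_i}$ and $I'_{V_i}$. To concatenate two such modules, I would extend Lemma~\ref{lem:concatenation} to $n$D: position $H_{i+1}$ with one of its cone-apex handles componentwise-incomparable but diagonally adjacent to the opposite cone-apex handle of $H_i$, introduce a single bridge vertex $x$ supporting $K$ with identity arrows linking $x$ to both handles (in the same pattern as $A\circ B$), and set every other new vertex to zero. The endomorphism computation of Lemma~\ref{lem:concatenation} then carries over verbatim, so the result is again candy-like and indecomposable. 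Topologically, the bridge vertex together with its two identity arrows is a contractible subcomplex joining the two supports, hence the concatenation is homotopy equivalent to the wedge $\supp(H_i)\vee\supp(H_{i+1})$, and positive-dimensional Betti numbers add. Iterating over all $i$ with $b_i>0$ yields the required module $M$ with $\tilde\beta_i(\supp M)=b_i$ for all $0<i<n$.

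The main obstacle is the construction of $V_i$ for $i\geq 2$: realizing $\bigvee_{b_i} S^{i-1}$ concretely as a finite union of axis-aligned boxes in an $(n-1)$D grid and verifying that the bridges connecting the spheres neither create unintended cycles nor spoil the suspension-of-support computation of Corollary~\ref{cor:hyperholes}. This requires a careful combinatorial placement (for example, attaching each bridge to a single face-rectangle of its target sphere so that contracting the bridges recovers the disjoint spheres up to homotopy) together with a direct homological check on the resulting clique-cubical complex. Once the $V_i$ are in hand, the indecomposability of each $H_i$, the preservation of the candy-like structure under concatenation, and the additivity of positive-dimensional Betti numbers are all routine adaptations of arguments already established in the paper.
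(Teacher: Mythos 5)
Your proposal is correct in outline, but it takes a genuinely different route from the paper's proof. The paper lets the ambient dimension grow with the homological dimension: for each $i$ it takes $T_i$ to be the $(i+1)$D module given directly by Corollary~\ref{cor:hyperholes} (so the $b_i$ holes of $T_i$ are top-dimensional for its own grid, and only hollow $i$-cube boundaries are ever needed), and then glues inductively, at each step embedding the partial module $M_i$ into the next grid by appending a zero coordinate and joining the maximal vertex of $\supp M_i'$ to the minimal vertex of $\supp T_{i+1}$ (extremal with respect to a twisted order) by a \emph{single grid arrow} carrying an identity map, re-checking $\End \cong K$ at each step. You instead keep every block in the final ambient dimension $n$, which forces you to generalize the input of Corollary~\ref{cor:hyperholes}: for $i<n-1$ you must realize $\bigvee_{b_i}S^{i-1}$ as the support of a rectangle-decomposable $(n-1)$D module (hollow $i$-cube boundaries joined by contractible chains so that the support stays connected --- this connectedness is genuinely necessary, since suspending a disconnected support would create spurious $1$-dimensional holes), and only then suspend it with the primal-and-dual construction; afterwards you glue all blocks with an $n$D version of the candy concatenation, using a bridge vertex and two identity arrows as in Lemma~\ref{lem:concatenation}. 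Both gluings rest on the same two facts (supports placed pairwise incomparably, so no commutativity constraints arise, and scalar endomorphism rings tied together across the connecting arrows), so your endomorphism and Betti-number bookkeeping is sound; just note that the staircase placement must make \emph{all} blocks pairwise incomparable, not only consecutive ones, and that the handles you glue along are indeed corners of the bounding box lying in the support with one-dimensional fibers (they are, because the apex slices $I_{V_i}$ and $I'_{V_i}$ are single rectangles). The trade-off: the paper's induction avoids any realization of lower-dimensional spheres by rectangles, at the price of a dimension-by-dimension re-embedding argument; your construction is more uniform (one concatenation lemma, reduced Betti numbers adding under wedge), at the price of the combinatorial realization of the wedges, which you correctly flag as the main remaining work and which sits at the same level of detail that the paper itself leaves implicit in Corollary~\ref{cor:hyperholes}.
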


\begin{proof}
  Using Corollary~\ref{cor:hyperholes}, we first build a sequence of indecomposable  persistence modules $T_i$ for $0<i<n$ that satisfy the condition for exactly the $i$th Betti number.
  More precisely, each $T_i$ is an indecomposable $(i+1)$D persistence module whose support has exactly $b_i$ $i$-dimensional holes and no other holes. In other words, its Betti numbers are $1$ in dimension $0$, $b_i$ in dimension $i$ and $0$ otherwise.

  By construction, $T_i$ has two other important properties.
  First, $\End(T_i) \cong K$. Second, the support of $T_i$ has two extremal vertices, along which we string together these persistence modules. These extremal vertices are minimal and maximal under the following partial order: the usual order for each of the first $i$ coordinates and the opposite order for the last coordinate (along which we stack). In symbols, we define
  $(x^1,\hdots,x^i,x^{i+1}) \leq (y^1,\hdots,y^i,y^{i+1})$ if and only if $x^j \leq y^j$ for all $1\leq j \leq i$ and $x^{i+1} \geq y^{i+1}$. For example, $(1,2,2)<(2,3,1)$.
  Under this order and by construction, the support of $T_i$ has a minimal element $l_i$ and a maximal element $r_i$ and both of them host a one-dimensional vector space.
  Note that this order is only introduced to simplify the specification of these extremal points. We are not changing the underlying poset of which $T_i$ is a representation.


    We now build an indecomposable persistence module $M$ with the required property inductively.
    Let $M_1=T_1$. Clearly, $M_1$ is an indecomposable $2$D persistence module with $\End(M)\cong K$ and whose support has Betti numbers $1, b_1$. Furthermore, $M_1(r_1) = K$ (a one-dimensional $K$-vector space), by construction.

    Assume that we have built $M_i$, an indecomposable $(i+1)$D persistence module with $\End(M_i) \cong K$, Betti numbers $1,b_1,\dots,b_i$, and with maximal element $r_i$ in its support where $M_i(r_i) = K$.

    We embed $M_i$ into the $(i+1)+1 = (i+2)$-dimensional grid by adding an extra $0$ coordinate at the end of all indices, and denote it by $M_i'$.
    We translate the $(i+2)$D persistence module $T_{i+1}$ in the grid such that the maximal element $r_i'$ (which is $r_i$ after embedding) and $l_{i+1}$ (after this translation) are adjacent on the $(i+2)$D grid, with the only difference coming from the last coordinate.

    The $(i+2)$D persistence module $M_{i+1}$ is defined to be equal to $M'_i$ over its support and equal to the translated $T_{i+1}$ over its support, with the identity linear map for the arrow from $M_{i+1}(r'_i) = M'_i(r'_i) = K$ to  $M_{i+1}(l_{i+1}) = T_{i+1}(l_{i+1}) = K$, and zero elsewhere.

    First, $M_{i+1}$ is obtained by assembling the two modules $M_i'$ and $T_{i+1}$.
    To see that this is indeed a $(i+2)$D persistence module, we check that no nontrivial commutativity condition exists between the two parts as no pair of vertices in the support of $M'_i$ and $T_{i+1}$ are comparable except for those on the same slice as $l_{i+1}$.
    In other words, no path in the grid, from the support of $M'_i$ to the support of $T_{i+1}$, exists that does not go through the newly added arrow.

    The support of $M_{i+1}$ is obtained by linking the support of $M_i'$ and $T_{i+1}$ by a single arrow.
    The Betti numbers of $M_{i+1}$ are therefore the sum of the Betti numbers of $M_i'$ and $T_{i+1}$ for all positive dimensions and the sum minus one for $\beta_0$.
    Hence, $M_{i+1}$ has Betti numbers $1,b_1,\dots,b_{i+1}$.

    The shifted index $r_{i+1}$ is maximal for $M_{i+1}$.

    The endomorphism ring of $M_{i+1}$ is isomorphic to $K$.
    Indeed, let $f$ be an endomorphism of $M_{i+1}$.
    By hypotheses, $f$ is uniquely defined by two scalar values $\alpha$ and $\beta$ corresponding to the parts $M'_i$ and $T_{i+1}$.
    However, we have an extra condition due to the arrow linking the two parts of the module.
    An elementary computation shows that this condition implies that $\alpha=c\beta$ for some constant $c$, and therefore every endomorphism is uniquely defined by one scalar.

    By induction $M=M_n$ satisfies all the desired conditions and therefore is an indecomposable $(n+1)$D persistence module whose support has Betti numbers $1,b_1,\dots,b_n$.
\end{proof}


\section{Discussion}

The construction presented in this paper shows, for any given $1$D persistence module with finite support, how to build a $2$D indecomposable module containing it as a line restriction.
In fact we also built a single $2$D indecomposable with infinite support that contains all $1$D persistence modules with finite support as line restrictions.

We then showed that our main construction naturally extends to any finite-rectangle-decomposable $n$D persistence module, showing that each can be found in some indecomposable $(n+1)$D persistence module as a hyperplane restriction.
As a side result, we were also able, given arbitrary numbers $\beta_l$ for $1\leq l\leq n-1$, to build an indecomposable $n$D persistence module whose cubical complex has Betti number $\beta_l$ for all dimensions $1\leq l\leq n-1$.

Our construction can be extended to persistence modules indexed over $\mathbb{R}$ that can be decomposed into a finite direct sum of finite intervals. This extension requires a careful handling of the various types of intervals (open, closed and half-open), and separate-and-shift requires an extra enlarging of death indices (the second condition should be a strict inequality).

It is natural to ask whether or not our construction can be made functorial. In particular, suppose that for each $V \in \rep\Afn$, we fix exactly one construction $S(V)$ as in Eq.~\eqref{eq:master_construction} (recall that there are many choices in the construction; fix one choice for each).
If we let $L$ be the horizontal line at index $3$ ($4$th row), clearly $R_L(S(V)) = V$, where $V$ is viewed as an object in $\rep \mathbb{Z}$. Indeed, this is what we proved in Theorem~\ref{thm:main}.

Then the question is as follows. For each morphism $f:V\rightarrow W$, can we give a morphism $S(f) : S(V) \rightarrow S(W)$ so that $S:\rep\Afn \rightarrow \rep G$ becomes a functor? More strictly, is it possible to give the values $S(f)$ so that $R_L \circ S$ is (or at least isomorphic to) the identity functor? Unfortunately, it seems like our construction cannot be made into a functor in such a nice way.

Our methods relying on the structure of $1$D interval ($n$D finite-rectangle) representations do not extend trivially to general $n$D persistence modules. We do not yet know whether or not statements similar to our main Theorem~\ref{thm:main} and Theorem~\ref{thm:rectangles} can be made for general $n$D persistence modules, or indeed, for interval-decomposable $n$D persistence modules with $n > 1$.


\section*{Acknowledgements}
  We would like to thank Peter Bubenik for suggesting the extension of our main result to finite-rectangle-decomposable $n$D persistence modules, and Tomoo Yokoyama for raising the question which we studied in Section~\ref{sec:lotr}.
\bibliographystyle{plain}
\bibliography{refs}

\begin{thebibliography}{10}

\bibitem{thin-decomposability}
Hideto Asashiba, Micka{\"e}l Buchet, Emerson~G. Escolar, Ken Nakashima, and
  Michio Yoshiwaki.
\newblock On interval decomposability of 2d persistence modules.
\newblock {\em arXiv preprint arXiv:1812.05261}, 2018.

\bibitem{asashiba2018matrix}
Hideto Asashiba, Emerson~G. Escolar, Yasuaki Hiraoka, and Hiroshi Takeuchi.
\newblock Matrix method for persistence modules on commutative ladders of
  finite type.
\newblock {\em Japan Journal of Industrial and Applied Mathematics}, Sep 2018.

\bibitem{assem2006elements}
Ibrahim Assem, Andrzej Skowronski, and Daniel Simson.
\newblock {\em Elements of the Representation Theory of Associative Algebras:
  Volume 1: Techniques of Representation Theory}, volume~65.
\newblock Cambridge University Press, 2006.

\bibitem{bjerkevik2016stability}
H{\aa}vard~Bakke Bjerkevik.
\newblock Stability of higher-dimensional interval decomposable persistence
  modules.
\newblock {\em arXiv preprint arXiv:1609.02086}, 2016.

\bibitem{buchet_et_al:socg}
Micka{\"e}l Buchet and Emerson~G. Escolar.
\newblock {Realizations of Indecomposable Persistence Modules of Arbitrarily
  Large Dimension}.
\newblock In Bettina Speckmann and Csaba~D. T{\'o}th, editors, {\em 34th
  International Symposium on Computational Geometry (SoCG 2018)}, volume~99 of
  {\em Leibniz International Proceedings in Informatics (LIPIcs)}, pages
  15:1--15:13, Dagstuhl, Germany, 2018. Schloss Dagstuhl--Leibniz-Zentrum fuer
  Informatik.

\bibitem{butler1987auslander}
Michael~CR Butler and Claus Michael~Ringel.
\newblock Auslander-reiten sequences with few middle terms and applications to
  string algebras.
\newblock {\em Communications in Algebra}, 15(1-2):145--179, 1987.

\bibitem{carlsson2009theory}
Gunnar Carlsson and Afra Zomorodian.
\newblock The theory of multidimensional persistence.
\newblock {\em Discrete \& Computational Geometry}, 42(1):71--93, 2009.

\bibitem{chazal2016structure}
Fr{\'e}d{\'e}ric Chazal, Vin De~Silva, Marc Glisse, and Steve Oudot.
\newblock {\em The structure and stability of persistence modules}.
\newblock Springer, 2016.

\bibitem{crawley2015decomposition}
William Crawley-Boevey.
\newblock Decomposition of pointwise finite-dimensional persistence modules.
\newblock {\em Journal of Algebra and its Applications}, 14(05):1550066, 2015.

\bibitem{dey2018computing}
Tamal~K Dey and Cheng Xin.
\newblock Computing bottleneck distance for $2 $-d interval decomposable
  modules.
\newblock {\em arXiv preprint arXiv:1803.02869}, 2018.

\bibitem{edelsbrunner2000topological}
Herbert Edelsbrunner, David Letscher, and Afra Zomorodian.
\newblock Topological persistence and simplification.
\newblock In {\em Foundations of Computer Science, 2000. Proceedings. 41st
  Annual Symposium on}, pages 454--463. IEEE, 2000.

\bibitem{gabriel1972unzerlegbare}
Peter Gabriel.
\newblock Unzerlegbare darstellungen i.
\newblock {\em Manuscripta mathematica}, 6(1):71--103, 1972.

\bibitem{kelly1982basic}
Max Kelly.
\newblock {\em Basic concepts of enriched category theory}, volume~64.
\newblock CUP Archive, 1982.

\bibitem{lesnick2015interactive}
Michael Lesnick and Matthew Wright.
\newblock Interactive visualization of 2-d persistence modules.
\newblock {\em arXiv preprint arXiv:1512.00180}, 2015.

\bibitem{mac2013categories}
Saunders Mac~Lane.
\newblock {\em Categories for the working mathematician}, volume~5.
\newblock Springer Science \& Business Media, 2013.

\end{thebibliography}

\appendix
\section{Appendix}
\label{appendix}
Here, we reproduce Lemma~\ref{lem:vertical_stacking} and Lemma~\ref{lem:cross} and provide their proofs.

\noindent\parbox{\textwidth}{\stackinglem*}

\begin{proof}
  Let us construct an isomorphism
  \[
    \Phi: \rep \Gf{h,w} \rightarrow \fun{\Af{h}}{\rep\Af{w}}
  \]
  with inverse
  \[
    \Psi: \fun{\Af{h}}{\rep\Af{w}} \rightarrow \rep \Gf{h,w}.
  \]

  \paragraph{Effect of $\Phi$ on objects.}
  For $V \in \rep \Gf{h,w}$, define the representation $\Phi(V): \Af{h} \rightarrow \rep\Af{w}$ by the following. On $j\in [h]$, we set
  \[
    \Phi(V)(j) = V(j,-)
  \]
  which can be checked to be a functor from $\Af{w}$ to $\vect_K$, and is thus an object in $\rep\Af{w}$.
  This is the restriction of $V$ to row $j$.
  On $i\leq j$ in $[h]$, we set
  \[
    \Phi(V)(i\leq j) \defeq V((i,-) \leq (j,-)): V(i,-) \rightarrow V(j,-),
  \]
  which is the collection of maps indexed by $[w]$ with value at $k\in[w]$:
  \[
    \Phi(V)(i\leq j)_k = V((i,k)\leq(j,k)): V(i,k) \rightarrow V(j,k).
  \]
  That each $\Phi(V)(i\leq j)$ is a morphism (a natural transformation) in $\rep\Af{w}$ follows from the functoriality of $V$. In detail, we have for each $k\leq \ell$,
  \[
    \begin{tikzcd}[column sep=6em]
      V(i,-) \dar{\Phi(V)(i\leq j)} &
      V(i,k)) \rar{\gamma \defeq V((i,k)\leq(i,\ell))}  \dar{\alpha\defeq V((i,k)\leq(j,k))} &
      V(i,\ell) \dar{\delta\defeq V((i,\ell)\leq(j,\ell))}  \\
      V(j,-) &
      V(j,k) \rar[swap]{\beta\defeq V((j,k)\leq(j,\ell))} & V(j,\ell)
    \end{tikzcd}
  \]
  commutes since $\beta\alpha = V((i,k)\leq (j,\ell)) = \delta\gamma$.

  Finally, it can be checked that $\Phi(V)$ itself is a functor $\Phi(V): \Af{h}\rightarrow \rep\Af{w}$, and thus $\Phi(V)$ is an object of $\fun{\Af{h}}{\rep\Af{w}}$.

  \paragraph{Effect of $\Phi$ on morphisms.}
  For $f:V\rightarrow W$ a morphism in $\rep \Gf{h,w}$, let us construct $\Phi(f):\Phi(V)\rightarrow \Phi(W)$. This is a collection of maps $\Phi(f)_j:  \Phi(V)(j) \rightarrow \Phi(W)(j)$ for $j\in [h]$, where $\Phi(f)_j$ is given by restriction to the $j$th row:
  \[
    \Phi(f)_j \defeq f_{(j,-)}: V(j,-) \rightarrow W(j,-).
  \]
  This $j$th row is the collection $\Phi(f)_j = \{(\Phi(f)_j)_i\}_{i\in[w]}$ with
  \[
    (\Phi(f)_j)_i \defeq f_{(j,i)} : V(j,i) \rightarrow W(j,i).
  \]
  For each $j$, that $\Phi(f)_j$ is a natural transformation follows from naturality of $f$.

  Then we need to check that the collection $\Phi(f) = \{\Phi(f)_j\}$ in fact defines a natural transformation. This also follows from naturality of $f$.

  \paragraph{Functoriality of $\Phi$.}
  Next, $\Phi$ is clearly a functor: $\Phi(1)_j = 1_{(j,-)}$ so that $\Phi(1)$ is the identity, and
  \[
    (\Phi(gf))_j = (gf)_{(j,-)} = g_{(j,-)}f_{(j,-)} = \Phi(g)_j \Phi(f)_j.
  \]

  \paragraph{Effect of $\Psi$ on objects.}
  In the other direction, let now $M \in \fun{\Af{h}}{\rep\Af{w}}$. Let us define $\Psi(M) \in \rep \Gf{h,w}$.
  We put $\Psi(M)(i,j) = M(i)(j)$ for $(i,j) \in \Gf{h,w}$. For the pairs $(i,j)\leq (k,\ell)$,
  \[
    \Psi(M)((i,j)\leq (k,\ell)): \Psi(M)(i,j) \rightarrow \Psi(M)(k,\ell),
  \]
  should be a morphism from $M(i)(j)$ to $M(k)(\ell)$, which we define to be either composition in
  \[
    \begin{tikzcd}[column sep=3em]
      M(i)(j) \rar{M(i)(j\leq \ell)}\dar[swap]{M(i\leq k)_j} &
      M(i)(\ell) \dar{M(i\leq k)_\ell} \\
      M(k)(j) \rar{M(k)(j\leq \ell)} & M(k)(\ell)
    \end{tikzcd}.
  \]
  It does not matter which path is taken, as they are equal by naturality of $M(i\leq k): M(i)(-) \rightarrow M(k)(-)$.

  Then it can be checked that $\Psi(M):\Gf{h,w} \rightarrow \vect_K$ is a functor, and thus an object in $\rep \Gf{h,w}$.

 \paragraph{Effect of $\Psi$ on morphisms.}   For $g:M\rightarrow N$ a morphism in $\fun{\Af{h}}{\rep\Af{w}}$,
  let $\Psi(g)_{(i,j)} = (g_i)_j$ for $(i,j) \in \Gf{h,w}$. That $\Psi(g):\Psi(M) \rightarrow \Psi(N)$ is a morphism can be checked by using naturality of $g$, as follows. For $(i,j) \leq (k,\ell)$ in $\Gf{h,w}$, consider the diagram
  \begin{equation}
    \label{diag:psi_morphism}
    \begin{tikzcd}[column sep=3em]
      M(i)(j) \rar{M(i)(j\leq \ell)}\dar{(g_i)_j} & M(i)(\ell) \rar{M(i\leq k)_\ell}\dar{(g_i)_\ell} & M(k)(\ell)\dar{(g_k)_\ell} \\
      N(i)(j) \rar{N(i)(j\leq \ell)} & N(i)(\ell) \rar{N(i\leq k)_\ell} & N(k)(\ell)
    \end{tikzcd}
  \end{equation}
  where the composition of the top row is $\Psi(M)((i,j)\leq (k,\ell))$ and the bottom row is $\Psi(N)((i,j)\leq (k,\ell))$. The commutativity of the left square follows from naturality of $g_i$ for fixed $i$, and that of the right square follows from naturality of $g$ and then evaluating at $\ell$. This shows the commutativity of Diagram~\eqref{diag:psi_morphism}, and thus $\Psi(g)$ is indeed a natural transformation.

\paragraph{Functoriality of $\Psi$.}  $\Psi(1)$ is clearly the identity, and
  \[
    \Psi(gf)_{(i,j)} = ((gf)_{i})_j = (g_i f_i)_j = (g_i)_j (f_i)_j = \Psi(g)_{(i,j)}\Psi(f)_{(i,j)}.
  \]

\paragraph{Inverse.}
  By the definitions given
  \[
	\Psi(\Phi(V))(i,j) = \Phi(V)(i)(j) = V(i,-)(j) = V(i,j)
  \]
  and $\Psi(\Phi(V))((i,j)\leq(k,\ell))$ is given by the composition of the top row in
  \[
    \begin{tikzcd}[column sep=large]
      \Phi(V)(i)(j) \rar{\Phi(V)(i)(j\leq \ell)}\dar[equal] &
      \Phi(V)(i)(\ell) \rar{\Phi(V)(i\leq k)_\ell} \dar[equal]&
      \Phi(V)(k)(\ell) \dar[equal]\\
      V(i,j) \rar{V((i,j)\leq(i,\ell))} &
      V(i,\ell) \rar{V((i,\ell)\leq (k,\ell))} &
      V(k,\ell)
    \end{tikzcd}.
  \]
  By functoriality of $V$, we see from the above commutative diagram that
  \[
    \Psi(\Phi(V))((i,j)\leq(k,\ell)) = V((i,j)\leq(k,\ell)).
  \]
  Thus, $\Psi\Phi(V) = V$ as functors.

  On morphisms $f:V\rightarrow W$ in $\rep \Gf{h,w}$, we see that
  \[
    \Psi(\Phi(f))_{(i,j)} = (\Phi(f)_i)_j = (f_{(i,-)})_j = f_{(i,j)},
  \]
  and so $\Psi\Phi(f) = f$. Thus, $\Psi\Phi$ is equal to the identity functor.

  In the other direction, it can be checked that $\Phi(\Psi(M)) = M$ for functors $M\in \fun{\Af{h}}{\rep\Af{w}}$, and similarly $\Phi(\Psi(f)) = f$ for morphisms $f$. Thus, $\Phi\Psi$ is also an identity functor.
\end{proof}

\noindent\parbox{\textwidth}{\crosslem*}

\begin{proof}
  Write $T \in \mathcal{P}$ as:
	\[
		T:
		\begin{tikzcd}[ampersand replacement=\&]
			\& D \& \\
			B \rar{g} \& E \uar{i} \rar{h} \& C \\
			\& A \uar{f} \&
		\end{tikzcd}
	\]
	with the relations $hf=0$ and $ig=0$ which entails $\Ima(f)\subset\Ker(h)$ and $\Ima(g)\subset\Ker(i)$.

	Using the isomorphism theorems, we can decompose $A$ as:
	\begin{align*}
		A \cong & A/\Ker(if) \bigoplus \Ker(if)/f^{-1}(\Ima(g)) \bigoplus f^{-1}(\Ima(g))/\Ker(f) \bigoplus \Ker(f).
	\end{align*}
	A similar decomposition can be obtained for $B$ due to the symmetry of the structure.
	Similarly, we can decompose $C$ as:
	\begin{align*}
		C \cong & \Ima(hg) \bigoplus h(\Ker(i))/\Ima(hg) \bigoplus \Ima(h)/h(\Ker(i)) \bigoplus C/\Ima(h)
	\end{align*}
  and symmetrically $D$.
	The middle vector space decomposes into $9$ summands:
	\begin{align*}
		E \cong & E/(\Ker(i)+\Ker(h)) \bigoplus (\Ker(i)+\Ker(h)) \\
    \cong & E/(\Ker(i)+\Ker(h)) \bigoplus \Ker(i)/(\Ker(i)\cap\Ker(h))\\ & \bigoplus \Ker(h)/(\Ker(i)\cap\Ker(h)) \bigoplus \Ker(i)\cap\Ker(h) \\
    \cong & E/(\Ker(i)+\Ker(h)) \\
            & \bigoplus \left[\Ima(g)/(\Ima(g)\cap\Ker(h)) \bigoplus \Ker(i)/(\Ima(g)+(\Ker(i)\cap\Ker(h)))\right] \\
            & \bigoplus \left[ \Ima(f)/(\Ima(f)\cap\Ker(i)) \bigoplus \Ker(h)/(\Ima(f)+(\Ker(i)\cap\Ker(h)))\right] \\
		        & \bigoplus \left[ \Ima(g)\cap\Ima(f) \bigoplus (\Ker(i)\cap\Ima(f))/(\Ima(f)\cap\Ima(g))\right. \\
                &\bigoplus (\Ima(g)\cap\Ker(h))/(\Ima(f)\cap\Ima(g)) \\
            & \left.\bigoplus (\Ker(i)\cap \Ker(h))/((\Ima(f)+\Ima(g))\cap\Ker(i)\cap\Ker(h))\right].
	\end{align*}

	Note that the maps induced by $f$, $g$, $h$ and $i$ on these decompositions restricted to one of the summands in its domain has image fully contained in at most one summand of its codomain. Furthermore, every summand in the codomain contains the image of at most one summand of the domain.
	In other words, this decomposition of the vector spaces induces a direct sum decomposition of $T$. Note that we did not necessarily compute an indecomposable decomposition of $T$, as this decomposition already suffices for our purposes.

	Finally, inspecting the form of the decomposition gives the claimed result.
\end{proof}

\end{document}
